\renewcommand{\tilde}{\widetilde}
\let\temp\emptyset
\let\emptyset\varnothing
\let\varnothing\temp 
\theoremstyle{plain}
\newtheorem{theorem}{Theorem}[section]
\newtheorem{proposition}[theorem]{Proposition}
\newtheorem{corollary}[theorem]{Corollary}
\newtheorem{lemma}[theorem]{Lemma}
\newtheorem*{lemma*}{Lemma}
\newtheorem{conjecture}[theorem]{Conjecture}
\newtheorem*{truefact*}{Fact}
\newtheorem{question}[theorem]{Question}
\newtheorem{claim}[theorem]{Claim}
\theoremstyle{definition}
\newtheorem{definition}[theorem]{Definition}
\theoremstyle{remark}
\newtheorem*{remark}{Remark}
\newcommand{\bb}[1]{\expandafter\newcommand\expandafter{\csname #1\endcsname}{{\mathbb {#1}}}} 
\newcommand{\thus}{{\Rightarrow}}
\renewcommand{\a}{\alpha}
\renewcommand{\b}{\beta}
\newcommand{\g}{\gamma}
\renewcommand{\d}{\delta}
\newcommand{\ep}{\varepsilon}
\newcommand{\mrm}[1]{\expandafter\newcommand\expandafter{\csname #1\endcsname}{{\mathrm {#1}}}}
\newcommand{\ins}{\subseteq}
\renewcommand{\S}{\mathfrak S}
\renewcommand{\mod}{\bmod}
\title{Odd moments in the distribution of primes}
\author{Vivian Kuperberg}
\address{Department of Mathematics, Stanford University, Stanford, CA, USA}
\email{viviank@stanford.edu}
\thanks{The author is supported by NSF GRFP grant DGE-1656518, and would like to thank Kannan Soundararajan for many helpful comments and discussions, as well as R\'egis de la Bret\`eche, Alexandra Florea, Andrew Granville, Zeev Rudnick, Yuval Wigderson, and the anonymous referee for helpful feedback.}
\begin{document}

\begin{abstract}
Montgomery and Soundararajan showed that the distribution of $\psi(x+H) - \psi(x)$, for $0 \le x \le N$, is approximately normal with mean $ \sim H$ and variance $\sim H \log (N/H)$, when $N^{\delta} \le H \le N^{1-\delta}$. Their work depends on showing that sums $R_k(h)$ of $k$-term singular series are $\mu_k(-h \log h + Ah)^{k/2} + O_k(h^{k/2-1/(7k) + \ep})$, where $A$ is a constant and $\mu_k$ are the Gaussian moment constants. We study lower-order terms in the size of these moments. We conjecture that when $k$ is odd, $R_k(h) \asymp h^{(k-1)/2}(\log h)^{(k+1)/2}$. We prove an upper bound with the correct power of $h$ when $k = 3$, and prove analogous upper bounds in the function field setting when $k =3$ and $k = 5$. We provide further evidence for this conjecture in the form of numerical computations.
\end{abstract}
\maketitle

\section{Introduction}

What is the distribution of primes in short intervals? Cram\'er \cite{cramermodel} modeled the indicator function of the sequence of primes by independent random variables $X_n$, for $n \ge 3$, which are $1$ (``$n$ is prime'') with probability $\frac 1{\log n}$, and $0$ (``$n$ is composite'') with probability $1 - \frac 1{\log n}$. Cram\'er's model predicts that the distribution of $\psi(n+h)-\psi(n)$, a weighted count of the number of primes in an interval of size $h$ starting at $n$, follows a Poisson distribution when $n$ varies in $[1,N]$ and when $h \asymp \log N$. Gallagher \cite{gallaghershortintervals} proved that this follows from a quantitative version of the Hardy-Littlewood prime $k$-tuple conjecture: namely, that if $\mathcal D = \{d_1, d_2, \dots, d_k\}$ is a set of $k$ distinct integers, then 
\[\sum_{n \le N} \prod_{i=1}^k \Lambda(n+d_i) = (\mathfrak S(\mathcal D) + o(1))N,\]
where $\mathfrak S(\mathcal D)$ is the singular series, a constant dependent on $\mathcal D$ given by
\[\mathfrak S(\mathcal D) = \prod_p \left(1-\frac 1p \right)^{-k} \left(1-\frac{\nu_p(\mathcal D)}{p}\right),\]
where $\nu_p(\mathcal D)$ denotes the number of distinct residue classes modulo $p$ among the elements of $\mathcal D$. The singular series is also given by the formula
\begin{equation}\label{eq:sdsums}
\S(\mathcal D) = \sum_{\substack{q_1, \dots, q_k \\ 1 \le q_i < \infty}} \left(\prod_{i=1}^k \frac{\mu(q_i)}{\phi(q_i)}\right) \sum_{\substack{a_1, \dots, a_k \\ 1 \le a_i \le q_i \\ (a_i,q_i) = 1 \\ \sum a_i/q_i \in \Z}} e\left(\sum_{i=1}^k \frac{a_id_i}{q_i}\right).
\end{equation}

The Hardy-Littlewood prime $k$-tuple conjectures give us a better lens through which to understand the distribution of primes: by understanding sums of singular series. For example, Gallagher used the estimate that
\[\sum_{\mathcal D \subset [1,h]} \mathfrak S(\mathcal D) \sim \sum_{\mathcal D \subset [1,h]} 1\]
to prove that the Hardy-Littlewood conjectures imply Poisson behavior in intervals of logarithmic length. Our concern is the distribution of primes in somewhat longer intervals; namely, those of size $H$ where $H=o(N)$ and $H/\log N \to \infty$ as $N \to \infty$. In this setting, the Cram\'er model would predict that the distribution of $\psi(n+H)-\psi(n)$ for $n \le N$ is approximately normal, with mean $\sim H$ and variance $\sim H\log N$. However, the Hardy-Littlewood prime $k$-tuple conjecture gives a different answer in this case. In \cite{MontgomerySoundararajanPrimesIntervals}, Montgomery and Soundararajan provide evidence based on the Hardy-Littlewood prime $k$-tuple conjectures that the distribution ought to be approximately normal with variance $\sim H \log \frac NH$. They consider the $K$th moment $M_K(N;H)$ of the distribution of primes in an interval of size $H$, given by
\[M_K(N;H) = \sum_{n=1}^N(\psi(n+H)-\psi(n)-H)^K.\]
They conjecture that these moments should be given by the Gaussian moments
\[M_K(N;H) = (\mu_K + o(1))N\left(H \log \frac NH \right)^{K/2},\]
where $\mu_K = 1 \cdot 3 \cdots (K-1)$ if $K$ is even and $0$ if $K$ is odd, uniformly for $(\log N)^{1 + \d} \le H \le N^{1-\d}$. Their technique relies on more refined estimates of sums of the singular series constants $\mathfrak S(\mathcal D)$. Instead of the von Mangoldt function $\Lambda(n)$, they consider sums of $\Lambda_0(n) = \Lambda(n) - 1$, where the main term has been subtracted from the beginning. The corresponding form of the Hardy-Littlewood conjecture states that
\[\sum_{n \le N} \prod_{i=1}^k \Lambda_0(n+d_i)  = (\mathfrak S_0(\mathcal D) + o(1))N\]
as $N \to \infty$, where $\mathfrak S_0(\mathcal D)$ is given by
\[\mathfrak S_0(\mathcal D) = \sum_{\mathcal J \ins \mathcal D} (-1)^{|\mathcal D \setminus \mathcal J|} \mathfrak S(\mathcal J),\]
and in turn
\[\mathfrak S(\mathcal D) = \sum_{\mathcal J \ins \mathcal D} \mathfrak S_0(\mathcal J).\]
We can combine this with Equation \ref{eq:sdsums} to see that
\begin{equation}\label{eq:s0d}
\S_0(\mathcal D) = \sum_{\substack{q_1, \dots, q_k \\ 1 < q_i < \infty}} \left(\prod_{i=1}^k \frac{\mu(q_i)}{\phi(q_i)}\right) \sum_{\substack{a_1, \dots, a_k \\ 1 \le a_i \le q_i \\ (a_i,q_i) = 1 \\ \sum a_i/q_i \in \Z}} e\left(\sum_{i=1}^k \frac{a_id_i}{q_i}\right).
\end{equation}
Montgomery and Soundararajan considered the sum 
\begin{equation}\label{eq:rkh}
R_k(h) := \sum_{\substack{d_1, \dots, d_k \\ 1 \le d_i \le h \\ d_i \text{ distinct}}} \mathfrak S_0(\mathcal D),
\end{equation}
showing that for any nonnegative integer $k$, for any $h > 1$, and for any $\ep > 0$,
\begin{equation}\label{eq:msthm2}
R_k(h) = \mu_k(-h \log h + Ah)^{k/2} + O_k(h^{k/2-1/(7k) + \ep}),
\end{equation}
where $A = 2 - \g - \log 2\pi$. Their estimate on $R_k(h)$ implies their bound on the moments. For more on the distribution of primes in short intervals, see for example \cite{MR2217796} and \cite{GranvilleLumley}, as well as \cite{MontgomerySoundararajanPrimesIntervals}.

For all $k$, the optimal error term in \eqref{eq:msthm2} is expected to be smaller. In the case of the variance, this was studied in \cite{MR1954710}. In this paper, we restrict our attention to the cases when $k$ is odd. We conjecture the following, which was mentioned by Lemke Oliver and Soundararajan in \cite{lemkeoliversound}.
\begin{conjecture}\label{conj:rkoddbound}
Let $k \ge 3$ be an odd integer, and let $h > 1$. With $R_k(h)$ defined as above,
\[R_k(h) \asymp h^{(k-1)/2}(\log h)^{(k+1)/2}.\]
\end{conjecture}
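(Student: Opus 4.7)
My strategy is to reduce the odd-$k$ conjecture, via combinatorial expansion on the partition lattice, to a sharp estimate on $R_3(h)$, and then prove that estimate by Fourier-analytic bounds on \eqref{eq:s0d}. Inserting \eqref{eq:s0d} into \eqref{eq:rkh} and interchanging summation orders, I obtain
\[R_k(h) = \sum_{q_1, \dots, q_k > 1} \prod_{i=1}^k \frac{\mu(q_i)}{\phi(q_i)} \sum_{\substack{(a_i, q_i) = 1 \\ \sum_i a_i/q_i \in \Z}} \sum_{\substack{d_1, \dots, d_k \in [1,h] \\ \text{distinct}}} e\!\left(\sum_i \frac{a_i d_i}{q_i}\right).\]
Expanding the distinctness indicator by M\"obius inversion on the partition lattice $\Pi_k$, each partition $\pi$ contributes (up to a combinatorial sign) a product $\prod_{B \in \pi} S_B$, where $S_B = \sum_{d=1}^h e(d \sum_{i \in B} a_i/q_i)$. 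A block $B$ yields $S_B = h$ when $\sum_{i \in B} a_i/q_i \in \Z$ and a bounded geometric sum otherwise; because $q_i > 1$ and $(a_i,q_i)=1$ exclude singletons from being ``integer'' blocks, the Montgomery--Soundararajan even-$k$ analysis identifies pair partitions, sharpened by $q_i = q_j$, $a_j = q_i - a_i$, as the source of the Gaussian main term $\mu_k(h \log h)^{k/2}$. For odd $k$ no pair partition covers $\{1, \dots, k\}$, so the predicted dominant contributions come from partitions with exactly one $3$-block together with $(k-3)/2$ pair blocks; once the MS pair-block analysis is applied to the latter, these contribute $R_3(h) \cdot (R_2(h))^{(k-3)/2}$ times a combinatorial constant, and the conjecture follows from the sharp two-sided bound
\[R_3(h) \asymp h (\log h)^2.\]

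For the $R_3$ estimate, the constraint $a_1/q_1 + a_2/q_2 + a_3/q_3 \in \Z$ simplifies the $k = 3$ distinctness expansion to
\[T(\vec a, \vec q; h) = S_1 S_2 S_3 - |S_1|^2 - |S_2|^2 - |S_3|^2 + 2h,\]
since the 3-block gives $h$ and each pair block $\{i, j\}$ gives $\overline{S_k}$ for the complementary index $k$. I would bound each piece separately, after appropriate truncation (the coefficient sums are only conditionally convergent). The $2h$ piece contributes $O(h)$; each $|S_k|^2$ piece reduces, after grouping by $(a_k, q_k)$, to an $R_2$-type subsum in the remaining variables with the constraint $a_1/q_1 + a_2/q_2 \equiv -a_k/q_k$, and contributes $O(h \log h)$. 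The main piece $S_1 S_2 S_3$ is handled by parametrizing $a_3/q_3 \equiv -(a_1/q_1 + a_2/q_2) \pmod 1$ in lowest terms, applying $|S_i| \ll \min(h, q_i / |a_i \bmod q_i|)$, and dyadically decomposing the ranges of $(q_i, a_i)$; the target bound $h (\log h)^2$ then arises from two logarithmic integrals $\int du/u$ over the sizes of $\|a_1/q_1\|$ and $\|a_2/q_2\|$, with the single factor of $h$ coming from the constraint surface.

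The main obstacle is sharpness of the log-power. A crude absolute-value bound on $|S_1 S_2 S_3|$ gives $h(\log h)^3$---one log too many---so extracting the correct $(\log h)^2$ requires arithmetic cancellation from $\mu(q_3)/\phi(q_3)$ (equivalently, cancellation from averaging over the enslaved residue $a_3 \bmod q_3$), likely via a large-sieve or Kloosterman-type input. The matching lower bound $R_3(h) \gg h(\log h)^2$ is substantially harder, since $R_3(h)$ is a signed, highly oscillatory sum and isolating a definite-sign main contribution requires delicate tail control. For general odd $k \ge 5$, a further obstruction is that partitions with a single odd block of any size $2j+1 \le k$ combined with pairs all contribute the same predicted order $h^{(k-1)/2}(\log h)^{(k+1)/2}$, so the reduction in paragraph~1 must be coupled with an induction that simultaneously sharpens the $R_{2j+1}$ for all $j \ge 1$.
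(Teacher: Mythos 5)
The statement you are addressing is a \emph{conjecture} in the paper, and it remains open: the paper does not prove it. What the paper proves (Theorem~\ref{thm:threetermintegersums}) is the one-sided bound $R_3(h) \ll h(\log h)^4(\log \log h)^2$, which has the conjectured power of $h$ but two extra powers of $\log h$, together with function-field analogues (Theorems~\ref{thm:mvfuncfield}, \ref{thm:funcfieldfifth}, Corollary~\ref{cor:singseriesff}) and numerical evidence in Section~\ref{sec:numerics}. No lower bound of the conjectured order is established anywhere, and nothing at all is proved for general odd $k$ over $\Z$. So there is no paper proof to compare against; the only question is whether your plan, if carried out, would close the gap, and it would not.

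Your $k=3$ identity $T = S_1 S_2 S_3 - |S_1|^2 - |S_2|^2 - |S_3|^2 + 2h$ is correct (using $a_1/q_1 + a_2/q_2 \equiv -a_3/q_3 \pmod 1$ to turn each pair block into $\overline{S_j}\cdot S_j$), and it parallels the paper's $\prod_{i<j}(1-\delta_{i,j})$ expansion in Section~\ref{sec:threeterminteger}, where those correction terms are computed exactly and shown to be of order $h(\log h)^2$. But the reduction from general odd $k$ to $R_3(h)\cdot R_2(h)^{(k-3)/2}$ conflates two orthogonal decompositions. The partition lattice you invoke expands the distinctness condition on the $d_i$, whereas the ``Montgomery--Soundararajan pair-block analysis'' you then quote is a decomposition of the sum over $(q_i,a_i)$ into diagonal configurations $q_i=q_j$, $a_i/q_i+a_j/q_j\in\Z$ and off-diagonal ones. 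The Gaussian main term for even $k$ comes from the all-distinct-$d_i$ piece of your partition expansion, inside which the pairing happens among the $q_i$. The inner $(q_i,a_i)$ sum does not factor across your $d$-partition blocks because the single global constraint $\sum_i a_i/q_i\in\Z$ couples all $k$ indices at once; so the claimed factorization is heuristic, not a rigorous consequence of either decomposition, and you would have to re-derive an $R_{2j+1}$ bound at every odd level, as you yourself note.

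On $R_3$ itself: you are right that a crude absolute-value bound on $S_1S_2S_3$ is at least one $\log$ too generous, and that the sharp $(\log h)^2$ would require genuine cancellation from the $\mu(q_i)/\phi(q_i)$ weights. What you should know is that the paper's much more careful dyadic and Farey-spacing analysis of this piece (Theorem~\ref{threetermintegerreducedresidues} via Lemmas~\ref{lem:T1}--\ref{lem:T3}) still loses two extra logarithms, landing at $h(\log h)^4(\log\log h)^2$; the bottleneck is precisely $V_3(q;h)$, the sum without the distinctness condition, i.e.\ your $S_1S_2S_3$ piece. No known mechanism recovers the conjectured $(\log h)^2$, and the matching lower bound --- a signed, oscillatory sum with no identified positive main term --- is, as you say, substantially harder still. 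Treat your plan as a heuristic for where the main term should originate, not as an outline that could be completed to a proof.
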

The conjectured power of $\log h$ here comes from numerical evidence, which we present in Section \ref{sec:numerics}.
For $k$ odd, we do not know, even heuristically, which terms contribute to the main term in $R_k(h)$; for this reason, we do not know what the constant should be in front of the asymptotic in Conjecture \ref{conj:rkoddbound}. Nevertheless, our goal in this paper is to provide evidence for Conjecture \ref{conj:rkoddbound}. When $k = 3$, we can show an upper bound with the correct power of $h$. 
\begin{theorem}\label{thm:threetermintegersums}
For $h \ge 4$ and $R_3$ defined in \eqref{eq:rkh},
\[R_3(h) \ll h(\log h)^5.\]
\end{theorem}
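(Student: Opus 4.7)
The plan is to substitute the Fourier representation \eqref{eq:s0d} for $\S_0$ into the definition of $R_3$ and extract cancellation from the resulting exponential sums. Set $S_h(\alpha) := \sum_{d=1}^h e(d\alpha)$ and recall $|S_h(\alpha)| \le \min(h, 1/(2\|\alpha\|))$. First, I remove the distinctness of $d_1, d_2, d_3$ via inclusion--exclusion,
\[\mathbf{1}[d_1, d_2, d_3 \text{ distinct}] = 1 - \sum_{i<j} \mathbf{1}[d_i = d_j] + 2\,\mathbf{1}[d_1 = d_2 = d_3].\]
Substituting \eqref{eq:s0d} and summing over $d_1, d_2, d_3 \in [1,h]$ gives $R_3(h) = T(h) - 3U(h) + 2V(h)$, where each of $T, U, V$ is a sum over $(q_i, a_i)_{i=1}^3$ with $q_i > 1$, $(a_i, q_i) = 1$, and $\sum_i a_i/q_i \in \Z$, weighted by $\prod_i \mu(q_i)/\phi(q_i)$ and respectively by $\prod_i S_h(a_i/q_i)$, by $|S_h(a_3/q_3)|^2$ (using that $S_h(a_1/q_1 + a_2/q_2) = \overline{S_h(a_3/q_3)}$ under the constraint), or by $h$.

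The triple-collision term is immediate: $V(h) = Ch$ for an absolutely convergent constant $C$, so $V(h) \ll h$. For $T$ and $U$, I would decompose the summation by the coincidence structure of $(q_1, q_2, q_3)$, since the constraint $\sum_i a_i/q_i \in \Z$ with $(a_i, q_i) = 1$ forces strong divisibility relations --- in the $k = 2$ analogue it forces $q_1 = q_2$ outright. For $k = 3$ the configurations split into the diagonal $q_1 = q_2 = q_3$, intermediate cases where exactly two of the $q_i$ coincide with the third a divisor, and a generic case. Within each type I would count admissible $(a_i)$-tuples via Ramanujan-sum identities, bound $\prod_i |S_h(a_i/q_i)|$ by the min-bound above, and sum over the $q_i$ via Mertens-type estimates such as $\sum_{q \le x} \mu(q)^2/\phi(q) \ll \log x$ and divisor-sum estimates $\sum_{q \le x} d(q)/q \ll (\log x)^2$. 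The final $(\log h)^4(\log\log h)^2$ arises from three layers of such summation together with minor divisor-sum slack in the intermediate cases.

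The main obstacle is the diagonal $q_1 = q_2 = q_3 = q$ in $T$: taking absolute values there only recovers Montgomery--Soundararajan's $O(h^{3/2 - 1/21 + \ep})$, which is a factor of nearly $\sqrt h$ too large. To descend below this, cancellation must be exhibited between the diagonal piece of $T$ and a matching piece of $U$: the triple correlation $\sum^* S_h(a_1/q) S_h(a_2/q) S_h(a_3/q)$, summed over $a_i$ coprime to $q$ with $a_1 + a_2 + a_3 \equiv 0 \pmod q$, must be matched against the double correlation $\sum^*_a |S_h(a/q)|^2$ inside $U$. Realizing this match --- likely by completing $S_h(a/q)$ to a sawtooth-type function whose odd triple average vanishes under the symmetry $a \mapsto q - a$, and then bounding the resulting boundary contributions by the divisor sums above --- is the crux of the argument.
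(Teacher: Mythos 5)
Your inclusion--exclusion setup $R_3 = T - 3U + 2V$ matches the first step of the paper's reduction, and the observation $S_h(a_1/q_1 + a_2/q_2) = \overline{S_h(a_3/q_3)}$ is correct. However, there are two substantial problems.

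First, your assertion that $V(h) = Ch$ with an absolutely convergent constant $C$ is false. The sum $\sum_{q_i>1}\prod_i \mu(q_i)/\phi(q_i) \cdot \#\{(a_i)\}$ does not converge: already the diagonal $q_1=q_2=q_3=p$ contributes $-\sum_p (p-2)/(p-1)^2$, which diverges. Indeed the paper emphasizes that $\S_0(\mathcal D)$ ceases to converge once the $d_i$ are allowed to coincide, which is exactly the situation in your $U$ and $V$. The fix is to first truncate $\S_0(\mathcal D)$ to moduli dividing a fixed $q$ (the product of primes up to a power of $h$), using equation (45) of Montgomery--Soundararajan, and only then apply inclusion--exclusion; this replaces your $\S_0$ by the $q$-truncated version $\S_0(\cdot\,;q)$, whose values on degenerate tuples are explicitly $(q/\phi(q))^2 - 3(q/\phi(q)) + 2$ and so on. Your $V(h)$ then becomes $\asymp h(\log h)^2$, not $\ll h$, though this is still acceptable for the theorem.

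Second, and more importantly, you have misidentified the crux. You argue that the diagonal $q_1=q_2=q_3=q$ in $T$ resists absolute-value bounds and requires exhibiting a delicate cancellation against $U$, appealing to the Montgomery--Soundararajan $O(h^{3/2-1/21+\ep})$ bound. This is not the case: the diagonal is in fact harmless under absolute values, provided one counts the admissible $(a_1,a_2,a_3)$ carefully. For $q>h$, each $|S_h(a_i/q)|$ is governed by the $h$-approximate numerator $n_i = n(a_i,q)$; the constraint $\sum a_i/q \in \Z$ forces $n_1+n_2+n_3 = O(1)$, there are $\ll q/h$ admissible $a_i$ per choice of $n_i$ with $a_3$ then determined by $a_1, a_2$, and the resulting sum is $\ll hq^2$. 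Summing $\mu(q)^2 q^2/\phi(q)^3$ over $q\mid Q$ with $q>h$ gives $\ll \log h$, so the diagonal contributes only $\ll h\log h$. For $q \le h$ one gets $\ll q^3$ for the inner sum and $\ll h(\log\log h)^3$ overall. No sign cancellation is needed, and no $a\mapsto q-a$ symmetry argument appears in the paper.

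The genuine difficulty that your proposal does not engage with is the off-diagonal configurations where the $q_i$ share partial common factors. The paper's Theorem~\ref{threetermintegerreducedresidues} bounds $V_3(q;h)$ (which is what the truncated $T$ becomes) by parametrizing $q_1=gyz$, $q_2=gxz$, $q_3=gxy$ with $g,x,y,z$ pairwise coprime squarefree, so that the constraint reads $a_1 x + a_2 y + a_3 z \equiv 0 \pmod{gxyz}$, and then splitting into three regimes: $T_1$ where $gx \ge h$, $T_2$ where $gx,gy,gz<h$ and the $\|a_i/q_i\|$ are not all tiny, and $T_3$ where all $\|a_i/q_i\| \le 2/h$. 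Each regime requires its own lattice-point-counting lemma (the counts $C_\ell, C_m$ of Farey-type fractions in short intervals) and a triple-sum estimate such as Lemma~\ref{lem:triple_sum_F}. This case analysis, not a matched cancellation between $T$ and $U$, is where the work of the proof lives; your proposal sketches no part of it, so the plan as written would not deliver the theorem.
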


Another source of evidence for Conjecture \ref{conj:rkoddbound} is the analog of this problem in the function field setting, which is also studied in \cite{MR3158533}. As we discuss in Section \ref{sec:funcfieldmv}, we can consider analogous questions over $\F[T]$ where $\F$ is a finite field, instead of over $\Z$. To state the analog, we first revisit the techniques of Montgomery and Soundararajan in the integer case. Upon expanding Equation \ref{eq:rkh} using Equation \ref{eq:s0d}, we get
\begin{align*}
R_k(h) &= \sum_{\substack{d_1, \dots, d_k \\ 1 \le d_i \le h \\ d_i \text{ distinct}}} \sum_{\substack{q_1, \dots, q_k \\ 1 < q_i < \infty}} \left(\prod_{i=1}^k \frac{\mu(q_i)}{\phi(q_i)} \right) \sum_{\substack{a_1, \dots, a_k \\ 1 \le a_i \le q_i \\ (a_i,q_i) = 1 \\ \sum a_i/q_i \in \Z}} e\left(\sum_{i=1}^k \frac{a_id_i}{q_i}\right) \\
&= \sum_{\substack{q_1, \dots, q_k \\ 1 < q_i < \infty}} \left(\prod_{i=1}^k \frac{\mu(q_i)}{\phi(q_i)} \right) \sum_{\substack{a_1, \dots, a_k \\ 1 \le a_i \le q_i \\ (a_i,q_i) = 1 \\ \sum a_i/q_i \in \Z}} \prod_{i=1}^k E\left(\frac{a_i}{q_i}\right),
\end{align*}
where $E(\a) = \sum_{m=1}^h e(m\a)$. The sums $E(\a)$ approximately detect when $\|\a\| \le \frac 1h$. 

This expression for $R_k(h)$ is closely related to a quantity studied by Montgomery and Vaughan in \cite{MontgomeryVaughanReducedResidues}. They considered the related problem of the $k$th moment of reduced residues modulo a fixed $q$, given by
\[m_k(q;h) = \sum_{n=1}^q \Big(\sum_{\substack{1 \le m \le h \\ (m+n,q) = 1}} 1 - h \frac{\phi(q)}{q}\Big)^k.\]
The moment $m_k$ satisfies $m_k(q;h) = q \left(\frac{\phi(q)}{q}\right)^k V_k(q;h)$, where $V_k(q;h)$ is the ``singular series sum,''
\[V_k(q;h) = \sum_{\substack{d_1, \dots, d_k \\ 1 \le d_i \le h}} \sum_{\substack{q_1, \dots, q_k \\ 1 < q_i|q}} \left(\prod_{i=1}^k \frac{\mu(q_i)}{\phi(q_i)} \right) \sum_{\substack{a_1, \dots, a_k \\ 1 \le a_i \le q_i \\ (a_i,q_i) = 1 \\ \sum a_i/q_i \in \Z}} e\left(\sum_{i=1}^k \frac{a_id_i}{q_i}\right),\]
which differs from $R_k(h)$ only in that the $q_i$ are now constrained to divide a fixed $q$. In this paper as well as in the work of Montgomery and Soundararajan, estimating $V_k(q;h)$ when $q$ is a product of primes $p \le h^{k+1}$ is a key step towards estimating $R_k(h)$. Similarly, understanding $m_k(q;h)$ is closely related to understanding $R_k(h)$. For example, Conjecture \ref{conj:rkoddbound} predicts that $R_k(h) \asymp h^{(k-1)/2}(\log h)^{(k+1)/2}$ when $k$ is odd; this conjecture is closely related to the prediction that when $q$ is a product of primes $p \le h^A$ for a fixed power $A$, and when $k$ is odd, then we should have $m_k(q;h) \asymp q(h/(\log h))^{(k-1)/2}$. In \cite{MontgomeryVaughanReducedResidues}, Montgomery and Vaughan predict that $m_k(q;h) \ll q(h/(\log h))^{(k-1)/2}$ in this setting. In the function field setting, we study an analog of the moments $m_k(q;h)$.

Let $\F_q$ be a finite field with $q$ elements, and let $Q$ be a fixed monic polynomial in $\F_q[t]$. Note that $Q$ in the function field case serves the same role as $q$ in the integer case, since $q$ now represents the size of the field. The moment $m_k(Q;h)$, an analog of the $k$th moment of reduced residues in short intervals which is defined precisely in \eqref{eq:defn-of-mk-in-fcn-field}, is the $k$th moment of the distribution of polynomials that are relatively prime to $Q$ lying in intervals of size $q^h$ in the function field $\mathbb F_q[t]$. In this case an ``interval'' of size $q^h$ centered at a polynomial $G(t)$ consists of all polynomials $F(t)$ such that $F(t) \equiv G(t) \mod t^h$. We can adapt the methods of Montgomery--Vaughan to prove a bound on $m_k(Q;h)$ that has the same shape as the bounds of Montgomery--Vaughan and Montgomery--Soundararajan.  
\begin{theorem}\label{thm:mvfuncfield}
For any fixed $k \ge 3$ and for $Q \in \F_q[t]$ squarefree, for $h \ge 2$,
\[m_k(Q;h) \ll \begin{cases}|Q|(q^h)^{k/2}\left(\frac{\phi(Q)}{|Q|}\right)^{k/2}\left(1 + (q^h)^{-1/(k-2)}\left(\frac{\phi(Q)}{|Q|}\right)^{-2^k+k/2}\right) &\text{ if } k \text{ is even}\\ |Q|((q^h)^{k/2-1/2} + (q^h)^{k/2-1/(k-2)})\left(\frac{\phi(Q)}{|Q|}\right)^{-2^k+k/2} &\text{ if } k \text{ is odd}.\end{cases}\]
\end{theorem}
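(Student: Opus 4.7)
The plan is to adapt Montgomery and Vaughan's integer argument to $\F_q[T]$. The first step is to unfold $m_k(Q;h)$ by orthogonality of additive characters modulo $Q$: write the coprimality condition $(A+B,Q)=1$ via Möbius inversion on divisors of $Q$, expand the $k$-th power, and swap summations so that the outer variables are $k$-tuples $(E_1,\ldots,E_k)$ of divisors of $Q$ together with their numerators $(a_i)$. After peeling off the constant term and executing the sum over residue classes $A$, this should yield
\[m_k(Q;h) = |Q|\left(\frac{\phi(Q)}{|Q|}\right)^k V_k(Q;h),\]
where $V_k(Q;h)$ is the function field counterpart of Montgomery--Vaughan's singular-series sum: indexed by non-trivial $E_i \mid Q$, with character sums over $a_i \bmod E_i$ satisfying $\sum a_i/E_i \in \F_q[T]$, weighted by $\prod_i \mu(E_i)/\phi(E_i)$. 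Each slot contributes a short-interval exponential sum $\sum_{\deg B < h}$ of an additive character evaluated at $a_i B / E_i$.

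The second step exploits a feature special to $\F_q[T]$: these short-interval exponential sums have a rigid exact evaluation, being either $q^h$ or $0$ depending on whether the Laurent expansion of $a/E$ at infinity is supported in the relevant range of degrees. This eliminates the truncation losses that appear in the integer setting and reduces $V_k(Q;h)$ to a clean combinatorial sum over $k$-tuples of divisors satisfying explicit degree constraints, weighted by a multiplicative factor.

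The third step is the combinatorial and multiplicative analysis. I would organize the tuples by the set partition of $\{1,\ldots,k\}$ induced by common irreducible factors of the $E_i$'s. For $k$ even, the dominant contribution comes from perfect matchings, yielding the main term of size $|Q|(q^h)^{k/2}(\phi(Q)/|Q|)^{k/2}$. For $k$ odd no perfect matching on $\{1,\ldots,k\}$ exists, so every surviving partition contains either a singleton or an odd block of size at least three; either case costs a factor of $(q^h)^{1/2}$ relative to the even bound and produces the $(q^h)^{(k-1)/2}$ savings. The interpolating term $(q^h)^{-1/(k-2)}$ should arise by applying Hölder's inequality between the sharp second-moment bound and a cruder $(k-2)$-th moment estimate.

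The main obstacle is proving the multiplicative estimates that produce the factor $(\phi(Q)/|Q|)^{-2^k + k/2}$. Each subset $S \subseteq \{1,\ldots,k\}$ corresponds to a configuration where the $E_i$ with $i \in S$ share a common irreducible factor of $Q$, contributing a local factor of $(1 - |P|^{-1})^{-1}$ at each such $P \mid Q$; summing over all $2^k$ subsets while tracking the cross-cancellation forces an inclusion-exclusion modelled on the relevant lemma of Montgomery--Vaughan. Keeping the uniformity in $Q$ sharp through this step, without collapsing the parity distinction between even and odd $k$, is the delicate portion of the argument.
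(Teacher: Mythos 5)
Your first two steps are exactly right: the reduction $m_k(Q;h) = |Q|(\phi(Q)/|Q|)^k V_k(Q;h)$ is the paper's Lemma \ref{lem:mvl2}, and the rigid evaluation of the short-interval character sums (either $q^h$ or $0$ depending on degree) is the clean feature the paper exploits. Your high-level intuition that perfect matchings dominate for even $k$ and that odd $k$ forces an odd block (of size $\ge 3$) which costs $(q^h)^{1/2}$ is also the correct picture.

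However, the proposal is missing the mechanism that actually converts that intuition into a bound. The paper's engine is a Cauchy--Schwarz-based \emph{Fundamental Lemma}, applied iteratively: peel off two of the $R_i$'s (call them $R_1,R_2$), combine them into a new variable with denominator $R_1'R_2'S$ via the Chinese Remainder Theorem, and bound by Cauchy--Schwarz. This gives a trichotomy $X_1 + X_2 + X_3$ (Lemma \ref{lem:mvl7}) depending on whether $(R_1,R_2)$ is small, $R_1/(R_1,R_2)$ is large, or $R_1 = R_2$ outright, and the whole delicacy lies in \emph{choosing} which pair $(R_i,R_j)$ plays the role of $(R_1,R_2)$ as a function of the coincidence structure of the tuple. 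Your proposal to ``organize by set partition and sum'' does not supply a substitute for this Cauchy--Schwarz step; merely classifying tuples by partition type does not bound the number of solutions to $\sum a_i/E_i = 0$ in each class, and without the Fundamental Lemma there is no visible route to the claimed exponents.

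More concretely, your explanation of the $(q^h)^{-1/(k-2)}$ term is wrong. You suggest Hölder between the second moment and a $(k-2)$-th moment, but there is no Hölder argument of that shape in the paper, and I do not see how such an interpolation would produce the exponent $1/(k-2)$ rather than, say, some weighted combination of $1/2$ and $1/(k-2)$. In the actual proof the exponent arises from a pigeonhole on the gcds: since $R_1 \mid \prod_{j\ne 1}(R_1,R_j)$, some pair has $|(R_i,R_j)| \ge |R_i|^{1/(k-1)}$; choosing that pair as the pivot and combining $|R_1'| > q^h$ with $|D| \ge |R_1|^{1/(k-1)} = |R_1'D|^{1/(k-1)}$ gives $|D|^{k-2} > q^h$, i.e.\ $X_2 = |D|^{-1} < q^{-h/(k-2)}$. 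This is a divisibility argument, not an interpolation inequality, and it is inseparable from the freedom to relabel $R_1,R_2$. Similarly, your heuristic for the $(\phi(Q)/|Q|)^{-2^k+k/2}$ factor gestures at inclusion--exclusion over subsets, but the actual bound comes from estimating an Euler product $\prod_{P\mid Q}\bigl(1 + |P|^{-1}(2 + (|P|-1)^{-1})^k\bigr) \ll (\phi(Q)/|Q|)^{-2^k}$ after the Fundamental Lemma has been applied. As written, the proposal has the right target and the right opening moves but lacks the central Cauchy--Schwarz lemma and the pivoting strategy, which together are the real content of the proof.
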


The function field exponential sums are cleaner than their integer analogs, making this proof more streamlined than the proof of Montgomery--Vaughan. As a result, the bound is tighter; in fact, for $k = 3$, Theorem \ref{thm:mvfuncfield} already yields a bound where the exponent of $q^h$ is $1$. This is of the same shape as Theorem \ref{thm:threetermintegersums}, where the exponent of $h$ is $1$.

Using a more involved argument we can achieve a bound on the fifth moment of reduced residues in short intervals.

\begin{theorem}\label{thm:funcfieldfifth}
Let $h \ge 2$ and let $Q = \prod_{\substack{P \text{irred.} \\ |P| \le q^{6h}}} P$. For all $\ep > 0$,
\[m_5(Q;h) \ll |Q|q^{2h + \ep}.\]
\end{theorem}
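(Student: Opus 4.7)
The plan is to capitalize on the structure of $Q = \prod_{|P| \le q^{2h}} P$: this $Q$ is so large that $\phi(Q)/|Q| \asymp 1/h$ and every polynomial of degree at most $2h$ divides $Q$. Writing $m_5(Q;h) = |Q|(\phi(Q)/|Q|)^5 V_5(Q;h)$ as in the setup of Theorem \ref{thm:mvfuncfield}, my goal reduces to showing $V_5(Q;h) \ll q^{2h+\ep}$, after which polynomial-in-$h$ losses are absorbed into $q^{\ep h}$.

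I would expand $V_5(Q;h)$ into its character-sum form, indexed by $5$-tuples $(q_1,\ldots,q_5)$ of monic divisors of $Q$ with $\deg q_i \ge 1$, weighted by $\prod_{i=1}^5 \mu(q_i)/\phi(q_i)$, against an inner sum over $a_i \bmod q_i$ with $(a_i,q_i) = 1$ and $\sum a_i/q_i \in \F_q[T]$. The function-field exponential sum $E(a/q) = \sum_{\deg M < h} e(aM/q)$ vanishes whenever the reduced denominator has degree above $h$, which effectively restricts to $\deg q_i \le h$. I would then classify $5$-tuples by their \emph{coincidence pattern} (which $q_i$'s coincide and which share common factors), and for each pattern use a change of variables together with multiplicativity across the primes dividing $Q$ to reduce to a product of local sums. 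Since $k=5$ is odd, no pattern is a perfect pairing; mirroring the structure of $R_3$ in Theorem \ref{thm:threetermintegersums}, the dominant configurations should be those of shape $(2,2,1)$ with two pairs of equal moduli and one singleton, each pair collapsing into a Ramanujan-type sum of size roughly $q^h$.

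The main obstacle is controlling the singleton $q_5$ after the two pairs have been collapsed. The constraint $\sum a_i/q_i \in \F_q[T]$ couples $a_5$ to the paired coordinates, and trivially bounding $E(a_5/q_5)$ would cost a full power of $q^h$ too many. To close the gap I would either apply Cauchy--Schwarz in $q_5$ against the paired sums, or dually use additive orthogonality to detect the constraint in a way that re-pairs $a_5$ with the other coordinates and produces square-root cancellation. This reduces matters to bounding a Dirichlet series over $Q$ of shape $\sum_{q|Q} \tau(q)^c/\phi(q)$ for some constant $c$; convergence of this series, which is assured precisely because $Q$ contains every small prime, supplies the remaining logarithmic-in-$h$ savings absorbed into $q^{\ep h}$. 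The non-$(2,2,1)$ patterns should contribute strictly less in $q^h$ and can be controlled by the multiplicative machinery already developed for Theorem \ref{thm:mvfuncfield}, and summing all contributions yields the desired bound.
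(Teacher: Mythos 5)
Your reduction to $V_5(Q;h)\ll q^{2h+\ep}$ is correct, and the paper follows the same first step, but from there the proposal diverges from what actually works and contains a sign error that cascades through the argument. You assert that the vanishing of $E(a/q)$ "effectively restricts to $\deg q_i \le h$." The direction is backwards: since $E(\alpha)$ is nonzero only when $|\alpha|<q^{-h}$, and any nonzero $a\bmod q_i$ with $(a,q_i)=1$ gives $|a/q_i|\ge q^{-\deg q_i}$, the inner sum vanishes identically unless $|q_i|>q^h$ for every $i$. (This is the first assertion of Lemma~\ref{lem:mvl7}.) The entire difficulty of the problem lives in the regime of \emph{large} moduli, not small ones, which undercuts the idea that the paired sums "collapse into Ramanujan-type sums of size roughly $q^h$."

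More seriously, you have misidentified the hard configurations. For $k=5$ odd, a $(2,2,1)$ pattern of genuinely equal moduli can never produce the $X_3$ main term of Lemma~\ref{lem:mvl7} (perfect pairings require $k$ even), and an equal triple is easy ($S=R_1$ forces $|S|\ge q^h$, giving a full $q^{-h/2}$ saving). What makes the Montgomery--Vaughan machinery lose a power of $q^{h/6}$ for $k=5$ is the $X_2$ term: tuples where all $|R_i|>q^h$, no three are equal, and for every pair with $R_i\ne R_j$ one has $|R_i/(R_i,R_j)|\ge q^h$ \emph{and} $|(R_i,R_j)|<q^{h/2}$. These moduli are large, pairwise distinct (or equal only in pairs), and have only modestly overlapping factorizations; no single pair has enough common factor to recover a full $q^{-h/2}$ saving. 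This is not a "coincidence pattern" in your sense at all, and it is not covered by "the multiplicative machinery already developed for Theorem~\ref{thm:mvfuncfield}" — for that machinery already fails on it (it yields $(q^h)^{13/6}$, not $(q^h)^2$).

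The mechanism the paper uses to handle this case is quite different from Cauchy--Schwarz in $q_5$ or re-pairing by orthogonality, and I don't see how either of your suggested alternatives would close the gap. The paper first shows (Lemma~\ref{lem:notmvtuples}) that after relabeling one may assume $|R_2/(R_1,R_2)|\ge q^{h/2}$ and $|R_3/(R_3,R_1R_2)|\ge q^{h/2}$. It then introduces the decomposition of each $R_i$ into coprime pieces $R_I$ indexed by subsets $I\subseteq[5]$ with $|I|\ge2$ (so $R_i=\prod_{I\ni i}R_I$ and each $a_i\bmod R_i$ splits by CRT), and peels off the constraints $|A_i/R_i|<q^{-h}$ one $i$ at a time: for $i=5$, then $i=4,\ldots,1$, at each stage counting via Lemma~\ref{lem:intervalbound} the number of fractions with denominator between $q^{h/2}$ and $q^{2h}$ lying in an interval of size $q^{-h}$. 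The two lower bounds from Lemma~\ref{lem:notmvtuples} are exactly what guarantee the denominator range $[q^{h/2},q^{2h}]$ needed for this count to be $\ll q^{-h+\ep h}$ at the $\ell_I=2$ and $\ell_I=3$ stages. It is also precisely the fact that this lower-bound chain runs out after three stages that prevents the argument from extending to $k=7$, as the paper remarks. Your proposal does not engage with any of this structure and therefore, as written, does not constitute a proof.
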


As discussed above, Conjecture \ref{conj:rkoddbound} would predict in the integer case that for $k$ odd and $q = \prod_{p \le h^A} p$, we have $m_k(q;h) \asymp q(h/(\log h))^{(k-1)/2}$. In the function field case, we have a polynomial $Q(t)$ in place of the modulus $q$, and an interval of size $q^h$ instead of one of size $h$, so the analog of Conjecture \ref{conj:rkoddbound} would predict that $m_5(Q;h) \asymp |Q| q^{2h}(\log q^{h})^{-2}$. In particular, Theorem \ref{thm:funcfieldfifth} matches the exponent of $q^h$ in this prediction. Our techniques do not quite succeed in proving such a bound for any higher odd moments, as we note in Section \ref{sec:funcfieldfifth}. However, we do get as a corollary the following bound on sums of singular series in function fields. The sum $R_k(q^h)$ of singular series in function fields is defined very analogously to the sum $R_k(h)$ in the integer setting; a precise definition is given in \eqref{eq:fcn-field-def-of-rk-qh}.

\begin{corollary}\label{cor:singseriesff}
Let $h \ge 2$ and let $Q = \prod_{\substack{P \text{ irred.} \\ |P| \le q^{6h}}} P$. Then
\[R_3(q^h) \ll V_3(Q;h) +q^h\left(\frac{|Q|}{\phi(Q)}\right)^2 \ll q^h \left(\frac{|Q|}{\phi(Q)}\right)^{8},\]
and for all $\ep > 0$,
\[R_5(q^h) \ll V_5(Q;h) + \left(\frac{|Q|}{\phi(Q)}\right)^{21/2}q^{2h} \ll q^{(2+\ep)h}.\]
\end{corollary}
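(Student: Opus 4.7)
The plan is to decompose $R_k(q^h) = V_k(Q;h) + \Delta_k$, bound the error $\Delta_k$ by a direct exponential-sum argument restricting to large prime factors, and then apply Theorems \ref{thm:mvfuncfield} and \ref{thm:funcfieldfifth} to bound $V_k(Q;h)$. The second inequality in each part of the corollary is then essentially the direct substitution of those theorems.

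To bound $\Delta_k$, I would expand via the function-field analog of \eqref{eq:s0d} and isolate tuples $(q_1,\dots,q_k)$ in which some $q_i$ is divisible by an irreducible $P$ of degree $>2h$, i.e.\ $q_i \nmid Q$. Factoring $q_i = Pq_i'$ with $(P, q_i') = 1$ and applying the Chinese Remainder Theorem splits the inner sum over $a_i$ into its $P$-component $c_i \in (\F_q[T]/P)^\times$ and its $q_i'$-component, while the identity $\mu(q_i)/\phi(q_i) = -\mu(q_i')/[(|P|-1)\phi(q_i')]$ provides a saving of $1/(|P|-1)$ per large prime. Examining the constraint $\sum_i a_i/q_i \in \F_q[T]$ at the prime $P$ forces $\sum_{i \in S_P} c_i \equiv 0 \pmod P$, where $S_P = \{i : P \mid q_i\}$: if $|S_P|=1$ the contribution vanishes, while if $|S_P| \ge 2$ the $c_i$-sum collapses to a Ramanujan-type sum of size $O(1)$ since the $d_i$ are distinct of degree $<h<\deg P$. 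Summing the resulting contribution over $\deg P > 2h$ using $\sum_{\deg P > 2h} 1/|P|^{|S_P|} \ll q^{-2h(|S_P|-1)}/h$ and bounding the complementary sum over $(q_i', a_i)$ by a lower-order $V$-type estimate then yields, after summing over configurations $S_P$, the bounds $\Delta_3 \ll q^h(|Q|/\phi(Q))^2$ and $\Delta_5 \ll q^{2h}(|Q|/\phi(Q))^{21/2}$.

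To finish, Theorem \ref{thm:mvfuncfield} with $k=3$ gives $m_3(Q;h) \ll |Q|q^h(|Q|/\phi(Q))^{13/2}$, which via the identity $V_k(Q;h) = m_k(Q;h)|Q|^{-1}(|Q|/\phi(Q))^k$ becomes $V_3(Q;h) \ll q^h(|Q|/\phi(Q))^{19/2}$, matching the corollary. Theorem \ref{thm:funcfieldfifth} similarly yields $V_5(Q;h) \ll q^{2h+\epsilon}(|Q|/\phi(Q))^5$, and the function-field Mertens estimate $|Q|/\phi(Q) \ll h \ll (q^h)^\epsilon$ for $Q = \prod_{\deg P \le 2h} P$ absorbs both this extra factor and the additive error $\Delta_5$ into $q^{2h+\epsilon}$. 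The main obstacle will be the bookkeeping for $\Delta_k$, particularly for $k = 5$ where $|S_P|$ ranges over $\{2,3,4,5\}$ and the exponent $21/2$ emerges from carefully balancing the prime-sum savings $(|P|-1)^{-|S_P|}$ against the complementary singular-series sum, with the latter again drawing on the proof of Theorem \ref{thm:funcfieldfifth} applied to the reduced variables.
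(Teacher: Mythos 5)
The decomposition $R_k(q^h) = V_k(Q;h) + \Delta_k$ is the right starting point and your second paragraph correctly recovers $V_3(Q;h) \ll q^h(|Q|/\phi(Q))^{19/2}$ from Theorem~\ref{thm:mvfuncfield} and $V_5(Q;h)\ll q^{2h+\ep}$ from Theorem~\ref{thm:funcfieldfifth}. But your analysis of $\Delta_k$ targets the wrong piece. You identify $\Delta_k$ entirely with the tuples $(R_1,\dots,R_k)$ containing an irreducible factor $P$ with $\deg P > 2h$ — i.e., the error from truncating $R_i$ to divisors of $Q$. In the function-field setting that truncation error is $O(1)$ (the $1/(|P|-1)$ savings you note make it decay geometrically in $\deg P$, far below the scale of $q^h(|Q|/\phi(Q))^2$). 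The actual source of the displayed error terms is not truncation at all: $R_k(q^h)$ sums over \emph{distinct} $D_1,\dots,D_k$ while $V_k(Q;h)$, being $|Q|^{-1}(|Q|/\phi(Q))^k m_k(Q;h)$, has no such constraint. The diagonal tuples with repeated $D_i$'s are exactly what contribute the $q^h(|Q|/\phi(Q))^2$ (for $k=3$) and $(|Q|/\phi(Q))^{21/2}q^{2h}$ (for $k=5$). Your proposal never addresses this.

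The paper's argument handles it by expanding $\prod_{i<j}(1-\delta_{ij})$ (where $\delta_{ij}=\mathds{1}_{D_i=D_j}$) and using the Euler-product identity $\mathfrak S(\{D,D,D_3,\dots\};Q) = \tfrac{|Q|}{\phi(Q)}\mathfrak S(\{D,D_3,\dots\};Q)$ to collapse each repeated-element term into lower-order singular-series sums $V_j(Q;h)$ with $j<k$ weighted by powers of $|Q|/\phi(Q)$. For $k=5$ this gives, e.g., the $\delta_{12}$ term as $-10\bigl((\tfrac{|Q|}{\phi(Q)}-2)V_4(Q;h)+q^h(\tfrac{|Q|}{\phi(Q)}-1)V_3(Q;h)\bigr)$, which Lemma~\ref{lem:mvl8} bounds by $(|Q|/\phi(Q))^{21/2}q^{2h}$; this is where the exponent $21/2$ comes from. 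Your proposal attributes the right numbers to the wrong mechanism and would need this diagonal step added before it could be made rigorous.
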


This paper is organized as follows. In Section \ref{sec:threeterminteger} we prove Theorem \ref{thm:threetermintegersums}. In Section \ref{sec:funcfieldmv}, we discuss the analogous problem in $\mathbb F_q[T]$, and adapt the framework of Montgomery and Vaughan to the function field setting to prove Theorem \ref{thm:mvfuncfield}. In Section \ref{sec:funcfieldfifth} we prove Theorem \ref{thm:funcfieldfifth}. Finally, in Section \ref{sec:numerics} we provide numerical evidence for Conjecture \ref{conj:rkoddbound}, and in Section \ref{sec:toys} we discuss toy problems, further directions of inquiry, and possible applications of these questions.

\section{Three-term integer sums: Proof of Theorem \ref{thm:threetermintegersums}}
\label{sec:threeterminteger}

Our goal is to bound
\[R_3(h) = \sum_{\substack{d_1, d_2, d_3 \\ 1 \le d_i \le h \\ d_i \text{ distinct}}} \mathfrak S_0(\mathcal D).\]
Expanding $\S_0(\mathcal D)$ as an exponential sum yields
\[R_3(h) = \sum_{\substack{d_1, d_2, d_3 \\ 1 \le d_i \le h \\ d_i \text{ distinct}}} \sum_{\substack{q_1, q_2,q_3 \\ 1 < q_i < \infty}} \left(\prod_{i=1}^3 \frac{\mu(q_i)}{\phi(q_i)}\right) \sum_{\substack{a_1, a_2, a_3 \\ 1 \le a_i \le q_i \\ (a_i, q_i) = 1 \\ \sum a_i/q_i \in \Z}} e\left(\sum_{i=1}^3 \frac{a_id_i}{q_i}\right).\]

Our argument will follow the same thread as that of Montgomery and Soundararajan \cite{MontgomerySoundararajanPrimesIntervals}, which in turn relies on the analysis of Montgomery and Vaughan \cite{MontgomeryVaughanReducedResidues} of the distribution of reduced residues. To that end, we consider $V_3(q;h)$, which is approximately the third centered moment of the number of reduced residues mod $q$ in an interval of length $h$. Precisely, $V_3(q;h)$ is given by
\begin{equation}\label{eq:defn-of-v-3}
V_3(q;h) = \sum_{\substack{d_1, d_2, d_3 \\ 1 \le d_i \le h}} \sum_{\substack{q_1, q_2, q_3 \\ 1 < q_i |q}} \left(\prod_{i=1}^3 \frac{\mu(q_i)}{\phi(q_i)}\right)\sum_{\substack{a_1, a_2, a_3 \\ 1 \le a_i \le q_i \\ (a_i,q_i) = 1 \\ \sum a_i/q_i \in \Z}} e \left( \sum_{i=1}^3 \frac{a_id_i}{q_i} \right). 
\end{equation}
This is very similar to the above expression for $R_3(h)$; the two differences are that the outer sum in $R_3(h)$ is taken over \emph{distinct} $d_i$'s, whereas the outer sum for $V_3(q;h)$ is not, and that the summands $q_i$ range over all integers for $R_3(h)$, but are restricted to factors of $q$ for $V_3(q;h)$. 

\begin{theorem}\label{threetermintegerreducedresidues}
Let $h \ge 4$ and let $q$ be the product of primes $p \le h^4$. Then
\[V_3(q;h) \ll h\left(\log h\right)^5.\]
\end{theorem}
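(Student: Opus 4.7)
The plan is to adapt the approach of Montgomery and Vaughan on moments of reduced residues, exploiting the structural simplifications that occur for $k=3$.

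First I would Fourier-expand each $E(a_i/q_i)=\sum_{m_i=1}^h e(m_ia_i/q_i)$ and encode the linear condition $\sum_i a_i/q_i\in\Z$ via orthogonality: writing $Q=\mathrm{lcm}(q_1,q_2,q_3)$, the indicator is $\frac{1}{Q}\sum_{t\bmod Q}e\bigl(t\sum_i a_i/q_i\bigr)$. After the inner sum over the $a_i$ collapses to Ramanujan sums, one reaches
\[V_3(q;h)=\sum_{m_1,m_2,m_3=1}^h\sum_{\substack{q_1,q_2,q_3\mid q\\ q_i>1}}\prod_{i=1}^3\frac{\mu(q_i)}{\phi(q_i)}\cdot\frac{1}{Q}\sum_{t\bmod Q}\prod_{i=1}^3 c_{q_i}(m_i+t).\]

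Next, the condition $(a_i,q_i)=1$ combined with $\sum a_i/q_i\in\Z$ forces every prime dividing any $q_i$ to divide at least two of the $q_j$. I therefore parametrize $q_1=abd$, $q_2=acd$, $q_3=bcd$ with $a,b,c,d$ pairwise coprime squarefree divisors of $q$ (where $d$ is the part common to all three and $a,b,c$ are shared by exactly two). The weight $\prod_i\mu(q_i)/\phi(q_i)$ simplifies to $\mu(d)/(\phi(a)^2\phi(b)^2\phi(c)^2\phi(d)^3)$, so only a \emph{single} Möbius factor (on $d$) survives, and this is the source of the cancellation that makes $k=3$ special.

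The bulk of the work is then to bound the resulting expression. I would use multiplicativity of Ramanujan sums to factor the $t$-sum over the primes of $abcd$ via CRT, reducing each local factor to a sum over residue classes of $(m_1,m_2,m_3)$ modulo the relevant prime, and then apply the orthogonality relations for Ramanujan sums to collapse the $t$-average. What emerges is a four-fold sum over squarefree $a,b,c,d$ dividing $q=\prod_{p\le h^4}p$, whose dominant contribution takes the shape of a main piece proportional to $\mu(d)$ times a multiplicative arithmetic weight, plus lower-order error terms coming from off-diagonal interactions.

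The main obstacle is demonstrating the Möbius-induced cancellation. Naïvely, the diagonal contribution is of size $h^{3/2}$ or larger, but for odd $k$ the $\mu(d)$-weighted sum of these terms collapses, leaving only a term of size $h$ up to arithmetic factors. Extracting this cancellation cleanly, by isolating at each prime $p$ which congruence classes of $(m_1,m_2,m_3)$ contribute and pairing them with the Möbius sign, is the core difficulty; the analogous identity would fail for even $k$ (consistent with the fact that even moments are genuinely of size $h^{k/2}$ in Montgomery--Soundararajan). Once the cancellation is verified, each of the four sums over squarefree divisors of $q$ contributes a factor of $\log h$ (reflecting the $\log h^4=4\log h$ primes in play), for a total of $(\log h)^4$, while the remaining Mertens-type sums $\sum_{p\le h^4}1/(p-1)=\log\log h+O(1)$ contribute two factors of $\log\log h$, yielding the bound $V_3(q;h)\ll h(\log h)^4(\log\log h)^2$.
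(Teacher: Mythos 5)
Your parametrization ($q_1=abd$, $q_2=acd$, $q_3=bcd$ with $a,b,c,d$ pairwise coprime) is exactly the paper's (there written $g,x,y,z$), and you correctly observe that after the weight simplifies, only the single Möbius factor on the triple-gcd part survives. You are also right that a direct Montgomery--Vaughan-style Cauchy--Schwarz bound would stall near $h^{3/2}$. But from there the proposal diverges from what actually works, and the divergence is a genuine gap rather than an alternative route.

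Your plan rests on the assertion that the drop from $h^{3/2}$ to $h$ is driven by cancellation in the $\mu(d)$-weighted sum, and you name extracting that cancellation as ``the core difficulty'' without carrying it out. The paper's proof does the opposite: it \emph{discards} the sign of $\mu(g)$ entirely at the very first step, replacing $E(\alpha)$ by $F(\alpha)=\min\{h,\|\alpha\|^{-1}\}$, and then proves the bound purely by counting. The improvement over the naive $h^{3/2}$ comes not from cancellation but from exploiting the rigid constraint $\sum_i a_i/q_i\in\Z$ together with the fact that each nonnegligible $a_i/q_i$ must lie within $O(1/h)$ of an integer. Concretely, the sum is split into three regimes according to whether $gx\ge h$ (where a direct counting of admissible $(a_1,a_2,a_3)$ gives the saving), or $gx,gy,gz<h$ with two of the fractions bounded away from $0$ (handled via a dyadic decomposition and a minimal-spacing argument for Farey-type fractions), or $gx,gy,gz<h$ with all three fractions near $0$ (again a spacing/counting argument). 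None of these steps uses a sign change; the Möbius factor only enters through its square.

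So the claim that $\mu(d)$-cancellation is ``the source'' of the $k=3$ improvement is a misdiagnosis of the mechanism, and since you never establish the cancellation identity you rely on, the argument as written does not close. The Ramanujan-sum reformulation is not obviously harmful, but it buries the structure that is actually exploited (closeness of $a_i/q_i$ to $\Z$ and spacing of fractions with bounded denominator), and the final paragraph's accounting of ``one $\log h$ per divisor sum, two $\log\log h$ from Mertens'' is heuristic bookkeeping rather than a proof; in the paper the $(\log h)^4$ and $(\log\log h)^2$ arise from specific Euler-product and $\phi(yz)^{-2}$ estimates in the three case analyses. To salvage the proposal you would either have to actually prove the claimed $\mu(d)$ cancellation (which the paper suggests is unnecessary and possibly not the right lever), or switch to the absolute-value counting strategy based on the size of $gx$, $gy$, $gz$ and the proximity of the fractions $a_i/q_i$ to $\Z$.
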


We use Theorem \ref{threetermintegerreducedresidues} to establish Theorem \ref{thm:threetermintegersums}. In order to derive Theorem \ref{thm:threetermintegersums}, it suffices to show that terms arising from transforming $V_3(q;h)$ into $R_3(h)$ do not contribute more than $O(h(\log h)^5)$; in fact they contribute on the order of $h(\log h)^2$, which is the conjectured asymptotic size of $R_3(h)$. We begin with this derivation of Theorem \ref{thm:threetermintegersums} from Theorem \ref{threetermintegerreducedresidues}.

In order to account for terms where $d_1,d_2,d_3$ are not necessarily distinct, we make the following definition.
\begin{definition}
Let $k \ge 2$, and let $\mathcal D = \{d_1,\dots,d_k\}$ be a $k$-tuple of not necessarily distinct integers, and fix $q$ a squarefree integer. Then the \emph{singular series at $\mathcal D$ with respect to $q$} is given by
\[\mathfrak S(\mathcal D; q) : = \sum_{q_1, \dots, q_k |q} \left(\prod_{i=1}^k \frac{\mu(q_i)}{\phi(q_i)} \right) \sum_{\substack{a_1, \dots, a_k \\ 1 \le a_i \le q_i \\ (a_i,q_i) = 1 \\ \sum a_i/q_i \in \Z}} e\left( \sum_{i=1}^k \frac{a_id_i}{q_i}\right).\]
Just as for $\S(\mathcal D)$, one can subtract off the main term of $\S(\mathcal D;q)$ to define 
\[\S_0(\mathcal D; q) := \sum_{\mathcal J \subset \mathcal D} (-1)^{|\mathcal D \setminus \mathcal J|} \S(\mathcal J;q).\]
Combining this with the definition for $\S(\mathcal D;q)$ yields the formula
\begin{equation}\label{s0dq}
\S_0(\mathcal D;q) = \sum_{1<q_1, \dots, q_k |q} \left(\prod_{i=1}^k \frac{\mu(q_i)}{\phi(q_i)} \right) \sum_{\substack{a_1, \dots, a_k \\ 1 \le a_i \le q_i \\ (a_i,q_i) = 1 \\ \sum a_i/q_i \in \Z}} e\left( \sum_{i=1}^k \frac{a_id_i}{q_i}\right).
\end{equation}
\end{definition}
If the $d_i$ are not all distinct, this expression converges for any fixed $q$ but not in the $q \to \infty$ limit. The singular series at $\mathcal D$ with respect to $q$ is equal to a finite Euler product.

\begin{lemma}\label{integerSeulerproduct}
Let $k \ge 2$, and let $\mathcal D = \{d_1,\dots,d_k\}$ be a $k$-tuple of not necessarily distinct integers, and fix $q$ a squarefree integer. Then
\[\mathfrak S(\mathcal D; q) = \prod_{p|q} \left( 1 - \frac 1p\right)^{-k}\left(1-\frac{\nu_p(\mathcal D)}{p}\right),\]
where $\nu_p(\mathcal D)$ is the number of distinct residue classes mod $p$ occupied by elements of $\mathcal D$.
\end{lemma}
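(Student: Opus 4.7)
The plan is to detect the integrality constraint $\sum_i a_i/q_i \in \Z$ with an additive character, and then collapse the resulting multi-sum into a product of classical Ramanujan-sum identities, one per prime dividing $q$. Concretely, because $q$ is squarefree and each $q_i | q$, the rational number $\sum_i a_i/q_i$ has denominator dividing $q$, so the orthogonality identity
\[\mathbf{1}\!\left[\sum_i a_i/q_i \in \Z\right] = \frac{1}{q} \sum_{b=1}^{q} e\!\left(b \sum_i \frac{a_i}{q_i}\right)\]
holds exactly. Inserting this into the definition of $\S(\mathcal D; q)$, switching the order of summation, and grouping factors indexed by $i$ gives
\[\S(\mathcal D; q) = \frac{1}{q} \sum_{b=1}^{q} \prod_{i=1}^{k} \left( \sum_{q_i | q} \frac{\mu(q_i)}{\phi(q_i)} \, c_{q_i}(b + d_i) \right),\]
where $c_{q_i}(m) = \sum_{(a, q_i)=1} e(am/q_i)$ is the usual Ramanujan sum. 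This reduces the problem to evaluating the inner Dirichlet-type sum.

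Next I will use the fact that, for fixed $m$, the function $d \mapsto \mu(d)c_d(m)/\phi(d)$ is multiplicative, so for squarefree $q$,
\[\sum_{d | q} \frac{\mu(d) c_d(m)}{\phi(d)} = \prod_{p | q} \left(1 - \frac{c_p(m)}{p-1}\right).\]
Since $c_p(m) = p - 1$ when $p | m$ and $c_p(m) = -1$ otherwise, the local factor is $0$ whenever $p | m$ and $p/(p-1)$ whenever $p \nmid m$. Hence the inner sum equals $q/\phi(q)$ if $\gcd(m, q) = 1$ and vanishes otherwise. Applying this with $m = b + d_i$, only the $b \in [1, q]$ with $\gcd(b + d_i, q) = 1$ for every $i$ contribute, and each such $b$ contributes $(q/\phi(q))^{k}$ to the outer product.

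By the Chinese remainder theorem, the number of $b \pmod q$ for which no $b + d_i$ shares a prime factor with $q$ is $\prod_{p | q}(p - \nu_p(\mathcal D))$, since at each prime $p | q$ the forbidden residues for $b$ are precisely $\{-d_i \bmod p\}$, of which there are $\nu_p(\mathcal D)$. Assembling the pieces,
\[\S(\mathcal D; q) = \frac{1}{q}\left(\frac{q}{\phi(q)}\right)^{k} \prod_{p | q}\bigl(p - \nu_p(\mathcal D)\bigr) = \prod_{p | q} \left(1 - \frac{1}{p}\right)^{-k}\!\left(1 - \frac{\nu_p(\mathcal D)}{p}\right),\]
which is the claimed Euler product.

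There is no real obstacle here; this is a standard Euler-product computation, and the only thing that requires a small amount of care is noting that the argument works even when the $d_i$ are not distinct, which is the regime in which the classical product over all primes can fail to converge. The finite truncation to primes $p | q$ is an honest identity of finite sums, so the lemma can be applied freely later when passing between $R_3(h)$ and $V_3(q; h)$ and handling diagonal contributions where two or more of the $d_i$ coincide.
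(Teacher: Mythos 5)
Your proof is correct and follows essentially the same approach as the paper's cited argument (Lemma~3 of Montgomery--Soundararajan): detect the constraint $\sum_i a_i/q_i \in \Z$ with additive characters, collapse the inner sums to Ramanujan sums, and read off the Euler product, noting that nothing in the computation requires the $d_i$ to be distinct. The only difference is organizational: the cited proof first reduces to a single prime $p$ by establishing multiplicativity of $\mathfrak{S}(\mathcal{D};q)$ in $q$, whereas you work directly modulo $q$ and recover the Euler factorization at the end from the multiplicativity of $d\mapsto \mu(d)c_d(m)/\phi(d)$ together with the Chinese Remainder Theorem count of admissible $b$.
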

This lemma is proven in \cite[Lemma 3]{MontgomerySoundararajanPrimesIntervals}; it is stated there for sets with distinct elements, but their proof holds in this setting as well.  They note first that $\mathfrak S(\mathcal D;q)$ is multiplicative in $q$, so that it suffices to check the lemma for primes $p$. For a given prime $p$, they express the condition that $\sum_{i=1}^k \frac{a_i}{q_i} \in \Z$ in terms of additive characters mod $p$, and then rearrange to get the result.

Consider the following expression for $\mathfrak S_0$, which is \cite[Equation (45)]{MontgomerySoundararajanPrimesIntervals}. For all $y \ge h$, 
\[\mathfrak S_0(\mathcal D) = \sum_{\substack{q_1, q_2, q_3 \\ q_i > 1 \\ p|q_i \Rightarrow p \le y}} \prod_{i=1}^3 \frac{\mu(q_i)}{\phi(q_i)} A(q_1, q_2, q_3;\mathcal D) + O\left(\frac{(\log y)}{y}\right), \]
where
\[A(q_1, q_2, q_3;\mathcal D) = \sum_{\substack{a_1,a_2,a_3 \\ 1 \le a_i \le q_i \\ (a_i,q_i) = 1 \\ \sum a_i/q_i \in \Z}}e\left(\sum_{i=1}^3 \frac{d_ia_i}{q_i} \right). \]
Apply this to $R_3(h)$ with $y = h^4$ and $q = \prod_{p \le y} p$ to get
\begin{align*}
R_3(h) &= \sum_{\substack{q_1, q_2, q_3 \\ q_i > 1 \\ q_i|q}} \prod_{i=1}^3 \frac{\mu(q_i)}{\phi(q_i)} S(q_1, q_2, q_3;h) + O(1),
\end{align*}
where
\[S(q_1, q_2, q_3;h) := \sum_{\substack{d_1, d_2, d_3 \\ 1 \le d_i \le h \\ d_i \text{ distinct}}} A(q_1,q_2,q_3;\{d_1,d_2,d_3\}) = \sum_{\substack{d_1, d_2, d_3 \\ 1 \le d_i \le h \\ d_i \text{ distinct}}} \sum_{\substack{a_1,a_2,a_3 \\ 1 \le a_i \le q_i \\ (a_i,q_i) = 1 \\ \sum a_i/q_i \in \Z}}e\left(\sum_{i=1}^3 \frac{d_ia_i}{q_i} \right).\]

If the condition that the $d_i$ should be distinct were omitted, then the main term in $R_3(h)$ would be exactly $V_3(q;h)$. So, it suffices to remove this condition.

Put $\d_{i,j} = 1$ if $d_i = d_j$ and $0$ otherwise, so that
\[\prod_{1 \le i < j \le 3} (1-\d_{i,j}) = \begin{cases}1 &\text{ if the $d_i$ are all pairwise distinct} \\ 0 &\text{ otherwise},\end{cases}\]
and
\[S(q_1, q_2, q_3;h) = \sum_{\substack{d_1, d_2, d_3 \\ 1 \le d_i \le h}} \left(\prod_{1 \le i < j \le 3}(1-\d_{i,j})\right) \sum_{\substack{a_1,a_2,a_3 \\ 1 \le a_i \le q_i \\ (a_i,q_i) = 1 \\ \sum a_i/q_i \in \Z}}e\left(\sum_{i=1}^3 \frac{d_ia_i}{q_i} \right).\]

Expanding the product over the $\delta_{i,j}$ yields
\[1 - \d_{1,2} - \d_{1,3} - \d_{2,3} + \d_{1,2}\d_{2,3} + \d_{1,3}\d_{1,2} + \d_{2,3}\d_{1,3} - \d_{1,2}\d_{2,3}\d_{1,3}.\]
Note that the last four terms each require precisely that $d_1 = d_2 = d_3$ in order to be nonzero; each of these can be written as $\d_{1,2,3}$, so that their sum is $2 \d_{1,2,3}$. The following lemma addresses the contribution of these last four terms.
\begin{lemma}
Let $h \ge 4$ be an integer. Then
\[2 \sum_{d \le h} \sum_{\substack{q_1,q_2,q_3 \\ q_i > 1 \\ q_i|q}}\prod_{i=1}^3 \frac{\mu(q_i)}{\phi(q_i)} \sum_{\substack{a_1,a_2, a_3 \\ 1 \le a_i \le q_i \\ (a_i,q_i) = 1 \\ \sum a
_i/q_i \in \Z}} e\left(\sum_{i=1}^3 \frac{d a_i}{q_i}\right) = 2h\left(\frac{q}{\phi(q)}\right)^2-6h\frac{q}{\phi(q)} +4h.\]
\end{lemma}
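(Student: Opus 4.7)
The plan is to recognize the inner triple sum as $\mathfrak{S}_0(\{d,d,d\};q)$, as defined in equation \eqref{s0dq}, and then to observe that this quantity is in fact independent of $d$. Indeed, when $d_1 = d_2 = d_3 = d$, the exponential factor equals $e(d \sum a_i/q_i)$, which is identically $1$ on the support of the inner sum (where $\sum a_i/q_i \in \Z$). So the $d$-sum simply contributes a factor of $h$, and the whole quantity on the left equals $2h \cdot \mathfrak{S}_0(\{d,d,d\};q)$ for any single value of $d$.

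To evaluate $\mathfrak{S}_0(\{d,d,d\};q)$, I would apply the inclusion-exclusion relation $\mathfrak{S}_0(\mathcal{D};q) = \sum_{\mathcal{J} \subset \mathcal{D}} (-1)^{|\mathcal{D} \setminus \mathcal{J}|} \mathfrak{S}(\mathcal{J};q)$ to the $3$-tuple $\mathcal{D} = (d,d,d)$. There are $2^3 = 8$ sub-tuples: one of size $3$, three of size $2$, three of size $1$, and the empty tuple, yielding
\[\mathfrak{S}_0(\{d,d,d\};q) = \mathfrak{S}(\{d,d,d\};q) - 3\mathfrak{S}(\{d,d\};q) + 3\mathfrak{S}(\{d\};q) - \mathfrak{S}(\emptyset;q).\]

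Next I would evaluate each term via Lemma \ref{integerSeulerproduct}. If $\mathcal{J}$ is a $j$-tuple of equal integers, then $\nu_p(\mathcal{J}) = 1$ for every prime $p \mid q$, so
\[\mathfrak{S}(\mathcal{J};q) = \prod_{p \mid q}\left(1 - \tfrac{1}{p}\right)^{1-j} = \left(\tfrac{q}{\phi(q)}\right)^{j-1},\]
with the convention that $\mathfrak{S}(\emptyset;q) = 1$. Plugging in $j = 3, 2, 1, 0$ gives
\[\mathfrak{S}_0(\{d,d,d\};q) = \left(\tfrac{q}{\phi(q)}\right)^2 - 3\,\tfrac{q}{\phi(q)} + 3 - 1 = \left(\tfrac{q}{\phi(q)}\right)^2 - 3\tfrac{q}{\phi(q)} + 2.\]

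Finally, multiplying by $2h$ yields the stated right-hand side $2h(q/\phi(q))^2 - 6h(q/\phi(q)) + 4h$. There is no real obstacle here; the only point requiring care is the combinatorial bookkeeping in applying the $\mathfrak{S}_0$-to-$\mathfrak{S}$ inversion to a tuple with repeated entries (as opposed to a set of distinct entries), but once the $8$ sub-tuples are correctly enumerated the computation is immediate.
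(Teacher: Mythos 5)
Your proof is correct and follows essentially the same route as the paper: recognize the inner sum as $\mathfrak{S}_0(\{d,d,d\};q)$, expand via inclusion--exclusion into $\mathfrak{S}(\{d,d,d\};q) - 3\mathfrak{S}(\{d,d\};q) + 3\mathfrak{S}(\{d\};q) - 1$, and evaluate each piece with the Euler product formula of Lemma \ref{integerSeulerproduct}. The only cosmetic difference is that you justify the $d$-independence up front via the exponential being identically $1$ on the support, whereas the paper observes it after applying the Euler product; both are fine.
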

\begin{proof}
Note that the left-hand expression is precisely $2\sum_{d \le h} \S_0(\{d,d,d\};q)$. Expanding $\S_0$ and applying Lemma \ref{integerSeulerproduct} yields
\begin{align*}
2\sum_{d \le h} \S_0(\{d,d,d\};q) &= 2 \sum_{d \le h} (\S(\{d,d,d\};q) - 3\S(\{d,d\};q) + 3\S(\{d\};q) - 1) \\
&= 2 \sum_{d \le h} \left(\prod_{p|q} \left(1-\frac 1p\right)^{-2} - 3\prod_{p|q}\left(1-\frac 1p\right)^{-1} + 2\right) \\
&= 2h \frac{q^2}{\phi(q)^2} - 6h \frac{q}{\phi(q)} + 4h,
\end{align*}
as desired.
\end{proof}

Now consider the contribution to $R_3(h)$ from the terms $-\d_{1,2}$, $-\d_{1,3}$, and $-\d_{2,3}$. Via relabeling, it suffices to only consider the term with $-\d_{1,2}$, which is nonzero when $d_1 = d_2$ and otherwise $0$. 
\begin{lemma}
Let $h \ge 4$ be an integer. Then
\begin{align*}
\sum_{d, d_3 \le h} \sum_{\substack{q_1,q_2,q_3 \\ q_i > 1 \\ q_i|q}}\prod_{i=1}^3 \frac{\mu(q_i)}{\phi(q_i)} &\sum_{\substack{a_1,a_2, a_3 \\ 1 \le a_i \le q_i \\ (a_i,q_i) = 1 \\ \sum a_i/q_i \in \Z}} e\left(d\left(\frac{a_1}{q_1} + \frac{a_2}{q_2}\right)\right) e\left(\frac{d_3a_3}{q_3}\right) \\
&= \left(\frac{q}{\phi(q)} - 2\right)\left(h\frac{q}{\phi(q)} - h \log h + Bh + O(h^{1/2 + \ep})\right)
\end{align*}
\end{lemma}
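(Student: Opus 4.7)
The key observation is that the Euler product formula of Lemma~\ref{integerSeulerproduct} produces a drastic simplification of $\mathfrak S_0(\{d, d, d_3\}; q)$, because the local invariant $\nu_p$ depends only on the underlying set of residues, so $\nu_p(\{d, d, d_3\}) = \nu_p(\{d, d_3\})$ and therefore
\[\frac{\mathfrak S(\{d, d, d_3\}; q)}{\mathfrak S(\{d, d_3\}; q)} = \prod_{p \mid q}\left(1-\frac{1}{p}\right)^{-1} = \frac{q}{\phi(q)}.\]
Expanding the inclusion--exclusion definition of $\mathfrak S_0$, and using $\mathfrak S(\{d, d\}; q) = q/\phi(q)$ along with $\mathfrak S(\{d\}; q) = \mathfrak S(\{d_3\}; q) = 1$, a brief calculation collapses everything to
\[\mathfrak S_0(\{d, d, d_3\}; q) = \left(\frac{q}{\phi(q)} - 2\right)\bigl(\mathfrak S(\{d, d_3\}; q) - 1\bigr).\]
This extracts the factor $q/\phi(q) - 2$ on the right-hand side of the lemma, reducing the claim to showing
\[\sum_{d, d_3 \le h}\bigl(\mathfrak S(\{d, d_3\}; q) - 1\bigr) = h\frac{q}{\phi(q)} - h \log h + Bh + O(h^{1/2 + \ep}).\]

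I would then split this sum into diagonal and off-diagonal parts. The diagonal $d = d_3$ contributes exactly $h(q/\phi(q) - 1)$. The off-diagonal piece equals $\sum_{d \ne d_3 \le h}\mathfrak S(\{d, d_3\}; q) - h(h-1)$, and I would estimate it by passing from the truncated singular series to the full one. Since $q = \prod_{p \le h^2} p$ and $\lvert d - d_3 \rvert < h < h^2$, every prime $p > h^2$ in the tail Euler product has $\nu_p = 2$, giving
\[\mathfrak S(\{d, d_3\}; q) = \mathfrak S(\{d, d_3\}) \prod_{p > h^2}\frac{(1-1/p)^2}{1 - 2/p} = \mathfrak S(\{d, d_3\})\bigl(1 + O(h^{-2})\bigr).\]
Summing this pointwise error over the $O(h^2)$ pairs produces only an $O(1)$ truncation error, comfortably within the stated error budget.

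The problem thus reduces to evaluating $\sum_{d \ne d_3 \le h}\mathfrak S(\{d, d_3\})$, which equals $h(h-1) + R_2(h)$ since $\mathfrak S_0(\{d, d_3\}) = \mathfrak S(\{d, d_3\}) - 1$. The $k = 2$ case of \eqref{eq:msthm2} from Montgomery--Soundararajan yields $R_2(h) = -h \log h + Ah + O(h^{1/2+\ep})$, so
\[\sum_{d \ne d_3 \le h}\mathfrak S(\{d, d_3\}) = h^2 - h - h \log h + Ah + O(h^{1/2+\ep}).\]
Collecting the diagonal, the off-diagonal, and the truncation contributions produces exactly the desired main term with $B = A - 1$. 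The only substantive input is Montgomery--Soundararajan's evaluation of $R_2(h)$; everything else is bookkeeping, and the one point requiring care, the tail estimate in the truncation step, is elementary thanks to the choice $y = h^2$ ensuring that no prime in the tail ever divides $d - d_3$.
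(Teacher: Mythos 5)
Your computation is correct and arrives at the right identity, but the route differs modestly from the paper's, and there is one citation slip worth flagging.

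Both you and the paper begin identically: apply Lemma~\ref{integerSeulerproduct} to expand $\mathfrak S_0(\{d,d,d_3\};q)$ and factor out $\left(\tfrac{q}{\phi(q)}-2\right)$, reducing the problem to evaluating $\sum_{d,d_3\le h}\bigl(\mathfrak S(\{d,d_3\};q)-1\bigr)$. Where you diverge is in handling this double sum. The paper invokes Lemma~4 of Montgomery--Soundararajan, which is a Parseval-type identity expressing the \emph{truncated} double sum directly as $\sum_{q_1\mid q}\tfrac{\mu(q_1)^2}{\phi(q_1)^2}\sum_{(a,q_1)=1}\lvert E(a/q_1)\rvert^2 = h\tfrac{q}{\phi(q)}+h^2-h\log h+Bh+O(h^{1/2+\ep})$, with $B=1-\gamma-\log 2\pi$; this treats the diagonal and off-diagonal together. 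You instead split off the diagonal $d=d_3$ (contributing $h(q/\phi(q)-1)$), pass from $\mathfrak S(\cdot;q)$ to the full singular series $\mathfrak S(\cdot)$ via a tail Euler-factor estimate (correctly using that $q=\prod_{p\le h^2}p$ and $|d-d_3|<h^2$ forces $\nu_p=2$ for $p>h^2$, so the tail contributes only $O(1)$ over the whole sum), and then appeal to the known asymptotic for $R_2(h)$. Your final bookkeeping, including $B=A-1$, is correct. The trade-off: your route is more elementary and doesn't rely on the Parseval identity, but it must separately establish the truncation estimate and the diagonal, whereas the paper gets both at once from one cited lemma.

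The one genuine flaw is the citation for the $R_2(h)$ asymptotic. You attribute $R_2(h)=-h\log h+Ah+O(h^{1/2+\ep})$ to the $k=2$ case of \eqref{eq:msthm2}, but that equation gives error $O(h^{k/2-1/(7k)+\ep})$, which for $k=2$ is only $O(h^{13/14+\ep})$ — far too weak to recover the claimed $O(h^{1/2+\ep})$. The sharper error is indeed available (it is a classical refinement of the second moment of singular series, essentially Lemma~4 of \cite{MontgomerySoundararajanPrimesIntervals}, and is the same input the paper uses), so the proof is salvageable with the correct citation. As written, though, the stated source does not support the stated error term.
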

\begin{proof}
As in the previous lemma, we note that the left-hand side is $\sum_{d, d_3 \le h} \S_0(\{d,d,d_3\};q)$. We again expand and apply Lemma \ref{integerSeulerproduct}, to get
\begin{align*}
\sum_{d, d_3 \le h} \S_0(\{d,d,d_3\};q) &= \sum_{d,d_3 \le h} (\S(\{d,d,d_3\};q) - 2\S(\{d,d_3\};q) - \S(\{d,d\};q) + 2) \\
&=  \left(\frac{q}{\phi(q)} - 2\right) \left(\sum_{d,d_3 \le h} \S(\{d,d_3\};q) - h^2\right).\\
\end{align*}
By \cite[Lemma 4]{MontgomerySoundararajanPrimesIntervals}, 
\[ \sum_{d,d_3 \le h} \S(\{d,d_3\};q) = \sum_{q_1|q} \frac{\mu(q_1)^2}{\phi(q_1)^2} \sum_{\substack{1\le a \le q_1 \\ (a,q_1) = 1}} \left|E \left(\frac a{q_1}\right)\right|^2 = h \frac{q}{\phi(q)} + h^2 - h \log h + Bh + O(h^{1/2 + \ep}),\]
with $B = 1-\g-\log 2\pi$. Thus our expression becomes
\[=\left(\frac{q}{\phi(q)} - 2\right)\left(h\frac{q}{\phi(q)} - h \log h + Bh + O(h^{1/2 + \ep})\right), \]
as desired.
\end{proof}

Combining these computations yields
\begin{align*}
R_3(h) = V_3(q;h) &+ 2h\left(\frac{q}{\phi(q)}\right)^2 -6h\frac{q}{\phi(q)}+4h \\
&-3\left(\frac{q}{\phi(q)}-2\right)\left(h\frac{q}{\phi(q)}-h\log h + Bh + O(h^{1/2+\ep})\right) \\
= V_3(q;h) &- h\left(\frac{q}{\phi(q)}\right)^2 + 3h\log h \frac{q}{\phi(q)} - 3Bh\frac{q}{\phi(q)} \\
&- 6h \log h + 6Bh +4h + O\left(h^{1/2+\ep}\frac{q}{\phi(q)}\right)
\end{align*}
By Theorem \ref{threetermintegerreducedresidues}, $V_3(q;h) \ll h(\log h)^5,$
so $R_3(h) \ll h(\log h)^5$, which completes the proof of Theorem \ref{thm:threetermintegersums}.

\subsection{Preparing for the proof of Theorem \ref{threetermintegerreducedresidues}}\label{subsec:lemmas-for-fraction-sums}

The rest of this section will be devoted to the proof of Theorem \ref{threetermintegerreducedresidues}; here we begin by fixing some notation and proving several preparatory lemmas. Specifically, Lemmas \ref{lem:n1n2n3boundswith-one-alpha}, \ref{lem:n1n2n3bounds-with-one-alpha-all-added}, \ref{lem:n1n2n3bounds-no-alpha-all-added}, and \ref{lem:n1n2n3bounds-with-one-alpha-n2-subtracted} are general results on adding integer reciprocals along hyperplanes. Lemmas \ref{lem:prep-for-small-case-all-denoms-big}, \ref{lem:prep-for-small-case-first-denom-small}, and \ref{lem:triple_sum_F} rely on these general results to prove bounds on specific sums that will appear in the proof of Theorem \ref{threetermintegerreducedresidues}.

We begin with a reparametrization of variables into a system of common divisors. Let $(q_1,q_2,q_3)$ be a triple in the sum in \eqref{eq:defn-of-v-3} defining $V_3(q;h)$. The contribution of the $(q_1,q_2,q_3)$ term to $V_3(q;h)$ is zero unless there are nontrivial solutions to 
\[\frac{a_1}{q_1} + \frac{a_2}{q_2} + \frac{a_3}{q_3} \in \mathbb Z,\]
or equivalently
\[a_1q_2q_3 + a_2q_1q_3 + a_3q_1q_2 \equiv 0 \mod q_1q_2q_3,\]
where $(a_i,q_i) = 1$ for all $i$. This implies that $q_1|q_2q_3$ (and likewise $q_2|q_1q_3$ and $q_3|q_1q_2$), since reducing mod $q_1$ shows that $a_1q_2q_3 \equiv 0 \mod q_1$, and by assumption $(a_1,q_1) = 1$. Since $q$ is squarefree, so are $q_1,q_2,$ and $q_3$, so we can reparametrize as follows. Let $g = \gcd(q_1,q_2,q_3)$ be the product of all primes dividing all three $q_i$'s. Define $x = \gcd(q_2/g,q_3/g)$, $y = \gcd(q_1/g,q_3/g)$, and $z = \gcd(q_1/g,q_2/g)$. Then $q_1 = gyz$, $q_2 = gxz$, and $q_3 = gxy$, with $g,x,y,z$ pairwise coprime and squarefree. This reparametrization is the same as writing the system of \emph{relative greatest common divisors} for $q_1, q_2$, and $q_3$; see for example \cite{MR4091066} for more details.

Then
\[V_3(q;h) = \sum_{\substack{g,x,y,z|q \\ gxy,gxz,gyz > 1}} \frac{\mu(g)\mu(gxyz)^2}{\phi(g)\phi(gxyz)^2} \sum_{\substack{a_1, a_2, a_3 \\ 0 \le a_1 < gyz, \dots \\ (a_1,gyz) = \cdots = 1\\ \tfrac{a_1}{gyz} + \tfrac{a_2}{gxz} + \tfrac{a_3}{gxy} \in \Z}} E\left(\frac{a_1}{gyz}\right) E\left(\frac{a_2}{gxz}\right)E\left(\frac{a_3}{gxy}\right). \]
We start by taking absolute values, using the bound that for all $0 \le \a < 1$,  $|E(\a)| \le F(\a)$, where
\begin{equation}\label{eq:defn-of-F-alpha}
F(\a) := \min\{h,\|\a\|^{-1}\},
\end{equation}
so that
\begin{equation}\label{eq:V3-bound-gxyz-and-Fs}
V_3(q;h) \le \sum_{\substack{g,x,y,z|q \\ gxy,gxz,gyz > 1}} \frac{\mu(gxyz)^2}{\phi(g)\phi(gxyz)^2} \sum_{\substack{a_1, a_2, a_3 \\ 0 \le a_1 < gyz, \dots \\ (a_1,gyz) = \cdots = 1\\ \sum a_1/gyz \in \Z}} F\left(\frac{a_1}{gyz}\right) F\left(\frac{a_2}{gxz}\right)F\left(\frac{a_3}{gxy}\right).
\end{equation}
We now split the sum $V_3(q;h)$ into three different sums, addressed separately. Let $T_1$ consist of all terms $g,x,y,z$ in \eqref{eq:V3-bound-gxyz-and-Fs} with $gx \ge h$. Let $T_2$ consist of all terms $g,x,y,z$ in \eqref{eq:V3-bound-gxyz-and-Fs} with $gx < h$, $gy < h$, and $gz < h$, and $\left\|\frac{a_2}{q_2}\right\|, \left\|\frac{a_3}{q_3}\right\| > \frac 1h$. Finally, let $T_3$ consist of all terms $g,x,y,z$ in \eqref{eq:V3-bound-gxyz-and-Fs} with $gx < h$, $gy < h$, and $gz < h$ as well as the constraints that $\left\|\frac{a_1}{gyz}\right\| \le \frac 2h$, $\left\|\frac{a_2}{gxz}\right\| \le \frac 2h$, and $\left\|\frac{a_3}{gxy}\right\| \le \frac 2h$. 

We claim that, after permuting the names of the variables as necessary, each term $g,x,y,z,a_1,a_2,a_3$ is contained in sums for $T_1$, $T_2$, or $T_3$. Terms where any of $gx$, $gy$, or $gz$ are $\ge h$ are included in a copy of $T_1$. For remaining terms we have $gx < h$, $gy < h$, and $gz < h$. If two of the three fractions $\frac{a_i}{q_i}$ satisfy $\left\|\frac{a_i}{q_i}\right\| \le \frac 1h$ (say $i = 1, 2$), then the third one must satisfy $\left\|\frac{a_3}{q_3}\right\| \le \frac 2h$ because $\frac{a_1}{q_1} + \frac{a_2}{q_2} + \frac{a_3}{q_3} \in \mathbb Z$; therefore, these terms are included up to permutation of indices in $T_3$. The remaining terms must be included, up to permuting the indices, in $T_2$. This implies in particular that
\[V_3(q;h) \ll T_1 + T_2 + T_3.\]
We will show in Lemmas \ref{lem:T1}, \ref{lem:T2}, and \ref{lem:T3} respectively that $T_1 \ll h(\log h)^5$, that $T_2 \ll h(\log h)^4(\log \log h)^2$, and that $T_3 \ll h(\log h)^4(\log \log h)^2$, which completes the proof of Theorem \ref{threetermintegerreducedresidues}. 

In what follows, it will be helpful for us to approximate fractions $\frac aq$ by a nearby multiple of $\frac 1h$; to do so, we make the following definition.

\begin{definition}\label{def:h-approximate-numerators}
Fix $h \ge 4$. Let $q > 1$ and let $1 \le a < q$ with $(a,q) = 1$. If $q > h$, the \emph{$h$-approximate numerator} $n(a,q)$ is defined to as
\begin{equation*}
n(a,q) = \left\lceil h \|a/q\| \right\rceil = \begin{cases} \left\lceil \frac{ha}{q}\right\rceil &\text{ if } \frac aq \le \frac 12 \\
\\
h - \left\lfloor \frac{ha}{q}\right\rfloor &\text{ if } \frac aq > \frac 12.\end{cases}
\end{equation*}
Meanwhile, if $q \le h$, the $h$-approximate numerator $n(a,q)$ is defined to be $a$ itself.
\end{definition}
For example, if $q > h$ and $\frac 1h < \frac aq \le \frac 2h$, say, then the $h$-approximate numerator $n(a,q) = 2$, so that $\frac 12 \frac{n(a,q)}{h} \le \frac aq \le \frac{n(a,q)}{h}$. The definition is arranged so that $n(a,q)$ is never zero when $(a,q) = 1$; if $0 < \frac aq \le \frac 1h$, then $n(a,q) = 1$. 
The key consequence of this definition is the following property.

\begin{claim}
Let $h \ge 4$. For $F(\a)$ defined in \eqref{eq:defn-of-F-alpha}, we have
\begin{equation}\label{eq:h-approximate-numerator-bound-on-F}
F\left(\frac aq \right) \le 2\left\|\frac{n(a,q)}{\min\{q,h\}}\right\|^{-1}.
\end{equation}
\end{claim}
\begin{proof}
If $q \le h$, then \eqref{eq:h-approximate-numerator-bound-on-F} states that
$\|a/q\|^{-1} \le 2\|a/q\|^{-1},
$
which is true.

For $q > h$, we restrict to considering the case when $\frac aq \in (0, \tfrac 12]$, so that $\left\|\frac aq \right\| = \frac aq$; the case when $\frac aq \in (\tfrac 12, 1)$ is analogous.
Assume first that $0 < \frac aq \le \frac 1h$. Then $F(a/q) = h$ and $n(a,q) = 1$, so that \eqref{eq:h-approximate-numerator-bound-on-F} states that $h \le 2h$, which is true. Finally assume that $\frac 1h < \frac aq$. By definition, $n(a,q) = \left\lceil ha/q\right\rceil = ha/q + e$, where $0 \le e < 1$. For any such $e$,
\begin{equation*}
\left\|\frac aq + \frac eh \right\| \le \left\|\frac aq \right\| + \frac 1h \le 2 \frac aq.
\end{equation*}
Thus
\begin{align*}
\left(\frac aq\right)^{-1} &\le 2 \left\|\frac aq + \frac eh \right\|^{-1} = 2 \left\|\frac{\lceil h \tfrac aq \rceil}{h}\right\|^{-1},
\end{align*}
which is precisely \eqref{eq:h-approximate-numerator-bound-on-F} in this case.
\end{proof}
We write $\tilde q := \min\{q,h\}$, so that $F(a/q) \le 2 \|n(a,q)/\tilde q\|^{-1}$. For any fraction $\frac aq$, we then have that $\frac aq \approx \frac{n(a,q)}{\tilde q}$ in the sense that $\left|\frac aq - \frac{n(a,q)}{\tilde q}\right| < \frac 1h$, since if $q \le h$ then $\frac aq = \frac{n(a,q)}{\tilde q}$, and if $q > h$ then this follows from the definition of $n(a,q)$.

We are now ready to proceed with several lemmas concerning sums of fractions, sums over quantities $\Big\| \frac aq \Big\|^{-1}$, and sums of $F(\a)$. The following four lemmas are general results on adding integer reciprocals of points lying close to certain hyperplanes. Loosely speaking, these lemmas will appear in our argument in the following way. For each of $T_1$, $T_2$, and $T_3$, we will have to evaluate a sum of the form
\begin{equation*}
\sum_{\substack{a_1,a_2,a_3 \\ \tfrac{a_1}{q_1} + \tfrac{a_2}{q_2} + \tfrac{a_3}{q_3} \in \mathbb Z}} F\left(\frac{a_1}{q_1}\right)F\left(\frac{a_2}{q_2}\right)F\left(\frac{a_3}{q_3}\right),
\end{equation*}
where in practice there will be further constraints on the terms $a_i$ and $q_i$. After applying \eqref{eq:h-approximate-numerator-bound-on-F} and the observation that $\frac aq \approx \frac{n(a,q)}{\tilde q}$, and dealing with a little casework on the sign of $n(a_i,q_i)$, we arrive at a sum that is roughly of the form
\begin{equation*}
8\prod_{i=1}^3 \min\{q_i,h\} \sum_{\substack{a_1,a_2,a_3 \\ \left\|\tfrac{n(a_1,q_1)}{\tilde{q_1}} + \tfrac{n(a_2,q_2)}{\tilde{q_2}} + \tfrac{n(a_3,q_3)}{\tilde{q_3}}\right\| \approx 0}} \frac 1{n(a_1,q_1)n(a_2,q_2)n(a_3,q_3)}.
\end{equation*}
In particular, in order to analyze $T_1$, $T_2$, $T_3$, we will have to understand sums of reciprocals of lattice points. Understanding the precise sums requires some amount of casework, largely coming from the cases $q_i < h$ versus $q_i \ge h$ and the cases $\tfrac{a_i}{q_i} \le \tfrac 12$ versus $\tfrac{a_i}{q_i} > \tfrac 12$. This casework is accomplished by the Lemmas \ref{lem:n1n2n3boundswith-one-alpha}, \ref{lem:n1n2n3bounds-no-alpha-all-added}, and \ref{lem:n1n2n3bounds-with-one-alpha-n2-subtracted}.

\begin{lemma}\label{lem:n1n2n3boundswith-one-alpha}
Let $\nu_2 \ge \nu_1$ and $\a_1 \ge 1$ be real numbers, and let $h \in \N$ with $h \ge 4$. Then
\begin{equation*}
\sum_{\substack{1 \le n_1 \le h/(2\a_1) \\ 1 \le n_2 \le h/2 \\ 1 \le n_3 \le h/2 \\ -\a_1n_1 + n_2 + n_3 \in [\nu_1,\nu_2]}} \frac 1{\a_1n_1n_2n_3} \ll \begin{cases}(\nu_2 - \nu_1 + 1) \frac{\log h}{\a_1}\left(\frac{2-\nu_1}{\a_1} + 1\right) &\text{ if $\nu_1 < 0$} \\
(\nu_2-\nu_1 + 1)\frac{\log h}{\a_1^2} &\text{ if $\nu_1 \ge 0$}, \end{cases}
\end{equation*}
where $n_1, n_2,$ and $n_3$ range over integers.
\end{lemma}
\begin{proof}
Since $n_2 + n_3 \ge \nu_1 + \a_1n_1$ and $n_2 + n_3 \ge 2$,
\begin{equation*}
\frac 1{\a_1n_1n_2n_3} = \frac 1{\a_1n_1(n_2 + n_3)}\left(\frac 1{n_2} + \frac 1{n_3}\right) \le \frac 1{\a_1n_1\max\{2,\nu_1 + \a_1n_1\}}\left(\frac 1{n_2} + \frac 1{n_3}\right).
\end{equation*}
The sum is then bounded by
\begin{align*}
\sum_{\substack{1 \le n_1 \le h/(2\a_1) \\ 1 \le n_2 \le h/2 \\ 1 \le n_3 \le h/2 \\ -\a_1n_1 + n_2 + n_3 \in [\nu_1,\nu_2]}} \frac 1{\a_1n_1n_2n_3}&\le \sum_{1 \le n_1 \le h/(2\a_1)} \frac{1}{\a_1n_1\max\{\nu_1 + \a_1n_1,2\}} \sum_{\substack{1 \le n_2 \le h/2 \\ 1 \le n_3 \le h/2 \\ -\a_1n_1 + n_2 + n_3\in [\nu_1, \nu_2]}} \frac 1{n_2} + \frac{1}{n_3} \\
&= \sum_{1 \le n_1 \le h/(2\a_1)} \frac{1}{\a_1n_1\max\{\nu_1 + \a_1n_1,2\}} \sum_{\substack{1 \le n_2 \le h/2 \\ 1 \le n_3 \le h/2 \\-\a_1n_1 + n_2 + n_3 \in [\nu_1,\nu_2]}} \frac {2}{n_2},
\end{align*}
where equality follows because the roles of $n_2$ and $n_3$ are symmetric. For fixed values of $n_1$ and $n_2$, the integer $n_3$ must satisfy $1 \le n_3 \le h/2$ and $n_3 \in [\nu_1 + \a_1n_1-n_2, \nu_2 + \a_1n_1 - n_2]$; the number of valid choices of $n_3$ is $\ll \nu_2 - \nu_1 + O(1)$. Thus the sum is
\begin{align*}
&\ll (\nu_2 - \nu_1 + 1)\sum_{\substack{1 \le n_1 \le h/(2\a_1)}} \frac 1{\a_1n_1\max\{\nu_1 + \a_1n_1,2\}}\sum_{\substack{1 \le n_2 \le h/2}} \frac 1{n_2} \\
&\ll (\nu_2 - \nu_1 + 1)\log h \sum_{1 \le n_1 \le h/(2\a_1)} \frac 1{\a_1n_1\max\{\nu_1 + \a_1n_1,2\}}. \\
\end{align*}

If $\nu_1 \ge 0$, then $\frac{\nu_1}{\a_1} + n_1 \ge 1$ and the sum is
\begin{align*}
&\ll (\nu_2 - \nu_1 + 1)\log h \sum_{1 \le n_1 \le h/(2\a_1)} \frac{1}{\a_1n_1(\nu_1 + \a_1n_1)} \\
&\ll (\nu_2 - \nu_1 + 1)\frac{\log h}{\a_1^2} \sum_{1 \le n_1 \le h/(2\a_1)} \frac 1{n_1(\tfrac{\nu_1}{\a_1} + n_1)} \ll (\nu_2 - \nu_1 + 1) \frac{\log h}{\a_1^2},
\end{align*}
since the sum over $n_1$ is bounded by $\sum_{n=1}^\infty \frac 1{n^2}$, and thus by a constant. This completes the proof for this case.

On the other hand, if $\nu_1 < 0$, then the sum is
\begin{align*}
&\ll (\nu_2-\nu_1 + 1)\log h \Big(\sum_{\substack{1 \le n_1 \le h/(2\a_1) \\ n_1 < \tfrac{2-\nu_1}{\a_1} + 1}} \frac 1{\a_1n_1} + \sum_{\substack{1 \le n_1 \le h/(2\a_1) \\ \nu_1 + \a_1n_1 \ge 2 + \a_1}} \frac 1{\a_1n_1(\nu_1 + \a_1n_1)} \Big) \\
&\ll (\nu_2 - \nu_1 + 1)\log h \Big( \frac 1{\a_1}\Big(\frac{2-\nu_1}{\a_1} + 1\Big) + \frac 1{\a_1^2}\sum_{\substack{1 \le n_1 \le h/(2\a_1) \\ \tfrac{\nu_1}{\a_1} + n_1 \ge \tfrac{2}{\a_1} + 1}} \frac 1{n_1(\tfrac{\nu_1}{\a_1} + n_1)}\Big).
\end{align*}
The final sum is bounded by $\sum_{n=1}^\infty \frac 1{n^2}$, and thus by a constant. This completes the proof.
\end{proof}

\begin{lemma}\label{lem:n1n2n3bounds-with-one-alpha-all-added}
Let $\nu_2 \ge \nu_1 \ge 3$ and $\a_1 \ge 1$ be real numbers, and let $h \in \N$ with $h \ge 4$. Then
\begin{equation*}
\sum_{\substack{1 \le n_1 \le h/(2\a_1) \\ 1 \le n_2 \le h/2 \\ 1 \le n_3 \le h/2 \\ \a_1n_1 + n_2 + n_3 \in [\nu_1,\nu_2]}} \frac 1{\a_1n_1n_2n_3} \ll  \frac{(\nu_2-\nu_1 + 1)}{\nu_1}\log \min\{\nu_2,h\} \left(\nu_2 - \nu_1 + 1 + \frac 1{\a_1} \log \min\{\nu_1,h\}\right),
\end{equation*}
where $n_1, n_2,$ and $n_3$ range over integers.
\end{lemma}
\begin{proof}
The first part of this proof follows along identical lines to that of Lemma \ref{lem:n1n2n3boundswith-one-alpha}, but with $\a_1$ having opposite signs. By following the first part of the argument of Lemma \ref{lem:n1n2n3boundswith-one-alpha}, we get that the sum we want to bound is
\begin{align*}
&\ll (\nu_2 - \nu_1 + 1)\sum_{\substack{1 \le n_1 \le h/(2\a_1)\\ n_1 \le (\nu_2-2)/\a_1}} \frac 1{\a_1n_1\max\{\nu_1 - \a_1n_1,2\}}\sum_{\substack{1 \le n_2 \le h/2 \\ n_2 \le \nu_2 - \a_1n_1}} \frac 1{n_2} \\
&\ll (\nu_2 - \nu_1 + 1)\log \min\{\nu_2,h\} \sum_{\substack{1 \le n_1 \le h/(2\a_1)\\ n_1 \le (\nu_2-2)/\a_1}} \frac 1{\a_1n_1\max\{\nu_1 - \a_1n_1,2\}}.
\end{align*}

If $\max\{\nu_1 - \a_1n_1,2\} = 2$, then $\nu_1 - 2 < \a_1n_1 \le \nu_2-2$. The number of such terms is $\ll \nu_2 - \nu_1$, and for these terms the summand is $\frac 1{2\a_1n_1} \ll \frac 1{\nu_1}$, so these terms provide an overall contribution of size $\ll (\nu_2-\nu_1 + 1) \log \min\{\nu_2,h\} \frac {\nu_2-\nu_1}{\nu_1}$. For the remaining terms, $\a_1 n_1 \le \nu_1-2$.

We rewrite $\frac 1{\a_1n_1(\nu_1 - \a_1n_1)} = \frac 1{\nu_1 \a_1n_1} + \frac 1{\nu_1(\nu_1 - \a_1n_1)}$, so that for the remaining terms we have
\begin{align*}
\sum_{\substack{1 \le n_1 \le h/(2\a_1)\\ n_1 \le (\nu_1-2)/\a_1}} \frac {1}{\a_1n_1\max\{\nu_1 - \a_1n_1,2\}} 
&= \sum_{\substack{1 \le n_1 \le h/(2\a_1) \\ n_1 \le (\nu_1 - 2)/\a_1}} \left(\frac 1{\nu_1 \a_1n_1} + \frac 1{\nu_1 (\nu_1 - \a_1n_1)}\right) \\
&\ll \frac {1}{\nu_1\a_1} \log \min\{\nu_1,h\} + \frac 1{\nu_1}\left(1 + \frac 1{\a_1} \log \min\{\nu_1,h\}\right).
\end{align*}
This completes the proof.
\end{proof}

\begin{lemma}\label{lem:n1n2n3bounds-with-one-alpha-n2-subtracted}
Let $\a_1 \ge 1$ and $\nu_2 \ge \nu_1$ be (possibly negative) real numbers, and let $h \in \N$ with $h \ge 4$. Then
\begin{equation*}
\sum_{\substack{1 \le n_1 \le h/(2\a_1) \\ 1 \le n_2 \le h/2 \\ 1 \le n_3 \le h/2 \\ \a_1n_1 - n_2 + n_3 \in [\nu_1,\nu_2]}} \frac 1{\a_1n_1n_2n_3} \ll (\nu_2 - \nu_1 + 1) \left(\frac{\log h}{\a_1} + 1\right)\frac{\log\max\{\nu_1,\a_1 + 1\} + 1}{\max\{\nu_1,\a_1\} + 1},
\end{equation*}
where $n_1, n_2,$ and $n_3$ range over integers.
\end{lemma}
\begin{proof}
Since $\a_1n_1 + n_3 \ge \nu_1 + n_2$ and $\a_1n_1 + n_3 \ge \a_1 + 1$, we have
\begin{equation*}
\frac 1{\a_1n_1n_2n_3} = \frac 1{n_2(\a_1n_1 + n_3)}\left(\frac 1{\a_1n_1} + \frac 1{n_3}\right) \le \frac 1{n_2\max\{\nu_1 + n_2,\a_1 + 1\}}\left(\frac 1{\a_1n_1} + \frac 1{n_3}\right).
\end{equation*}
The sum is then bounded by
\begin{align*}
\sum_{\substack{1 \le n_1 \le h/(2\a_1) \\ 1 \le n_2 \le h/2 \\ 1 \le n_3 \le h/2 \\ \a_1n_1 - n_2 + n_3 \in [\nu_1,\nu_2]}} \frac 1{\a_1n_1n_2n_3}&\le \sum_{1 \le n_2 \le h/2} \frac{1}{n_2\max\{\nu_1 + n_2,\a_1 + 1 \}} \sum_{\substack{1 \le n_1\le h/(2\a_1) \\ 1 \le n_3 \le h/2 \\ \a_1n_1 - n_2 + n_3\in [\nu_1, \nu_2]}} \left(\frac 1{\a_1n_1} + \frac{1}{n_3}\right). 
\end{align*}
For fixed values of $n_1$ and $n_2$, the integer $n_3$ must satisfy $1 \le n_3 \le h/2$ and $n_3 \in [\nu_1 - \a_1n_1+n_2, \nu_2 - \a_1n_1 + n_2]$; the number of valid choices of $n_3$ is $\ll \nu_2 - \nu_1 + O(1)$. Thus
\begin{align*}
&\sum_{1 \le n_2 \le h/2 } \frac 1{n_2\max\{\nu_1 + n_2,\a_1 + 1 \}} \sum_{\substack{1 \le n_1 \le h/(2\a_1) \\ 1 \le n_3 \le h/2 \\ \a_1n_1 - n_2 + n_3 \in [\nu_1,\nu_2]}} \frac 1{\a_1n_1} \\
&\ll \frac{(\nu_2 -\nu_1 + 1)}{\a_1} \log h \sum_{1 \le n_2 \le h/2 } \frac 1{n_2\max\{\nu_1 + n_2,\a_1 + 1 \}} \\
&\ll (\nu_2 - \nu_1 + 1)\frac{\log h}{\a_1} \frac{\log\max\{\nu_1,\a_1 + 1\} + 1}{\max\{\nu_1,\a_1\} + 1}.
\end{align*}

It remains to evaluate the $\frac 1{n_3}$ term in the sum. Since $n_3 \ge \nu_1-\a_1n_1 + n_2$, we have
\begin{align*}
&\sum_{1 \le n_2 \le h/2} \frac 1{n_2\max\{\nu_1 + n_2,\a_1 + 1 \}} \sum_{\substack{1 \le n_1 \le h/(2\a_1) \\ 1 \le n_3 \le h/2 \\ \a_1n_1-n_2 + n_3 \in [\nu_1,\nu_2]}} \frac 1{n_3} \\
&\ll \sum_{1 \le n_2 \le h/2} \frac 1{n_2\max\{\nu_1 + n_2,\a_1 + 1 \}} \sum_{\substack{1 \le n_1 \le h/(2\a_1)}} \frac{\nu_2 - \nu_1 + 1}{\lceil \nu_1 -\a_1n_1 + n_2\rceil} \\
&\ll \sum_{1 \le n_2 \le h/2} \frac {\nu_2 - \nu_1 + 1}{n_2\max\{\nu_1 + n_2, \a_1 + 1\}}\left(\frac{\log h}{\a_1}+ 1\right) \\
&\ll (\nu_2 - \nu_1 + 1) \left(\frac{\log h}{\a_1} + 1\right) \frac{\log\max\{\nu_1,\a_1 + 1\} + 1}{\max\{\nu_1,\a_1\} + 1}.
\end{align*}
This completes the proof.
\end{proof}

If $\a_1 = 1$, we have the following stronger bound.
\begin{lemma}\label{lem:n1n2n3bounds-no-alpha-all-added}
There exist absolute constants $C$ and $D$ such that for all integers $\nu \ge 3$ and $h \ge 4$,
\begin{equation*}
\sum_{\substack{1 \le n_1 \le \nu-2 \\ 1 \le n_2 \le \nu-2 \\ 1 \le n_3 \le \nu-2 \\ n_1 + n_2 + n_3 = \nu}} \frac 1{n_1n_2n_3} \le C \qquad \text{and} \qquad \sum_{\substack{1 \le n_1 \le h \\ 1 \le n_2 \le \nu + h \\ 1 \le n_3 \le \nu + h \\ n_2 + n_3 = \nu + n_1}} \frac 1{n_1n_2n_3} \le D 
\end{equation*}
where the sum ranges over integer values of $n_1, n_2, n_3$.
\end{lemma}
\begin{proof}
For real numbers $x, x'\ge 1$ with $|x-x'| \le 1$, we have $\left|\frac 1x - \frac 1{x'}\right| \le \frac 2x$. Thus 
\begin{align*}
\sum_{\substack{1 \le n_1 \le h/2 \\ 1 \le n_2 \le h/2 \\ 1 \le n_3 \le h/2 \\ n_1 + n_2 + n_3 = \nu}} \frac 1{n_1n_2n_3} &\le 8 \int_1^{\nu-2}\int_{1}^{\nu-x_1-1} \frac 1{x_1x_2(\nu-x_1-x_2)}\mathrm{d}x_2 \mathrm{d}x_1 \\
&= 8 \int_1^{\nu-2} \frac {2\ln(\nu - x_1 - 1)}{x_1(\nu-x_1)}  \mathrm dx_1 \\
&\le 16 \ln \nu \int_1^{\nu-2} \frac 1{x_1(\nu-x_1)} \mathrm{d}x_1 \\
&= 16 \ln \nu \frac{2 \ln(\nu-1)}{\nu} = 32 \frac{(\ln \nu)(\ln(\nu-1))}{\nu}. 
\end{align*}
The function $\frac{(\ln \nu)(\ln (\nu-1))}{\nu}$ has a global maximum $M$; setting $C = 16M$ completes the proof of the first claim.

For the second claim, we similarly have
\begin{align*}
\sum_{\substack{1 \le n_1 \le h \\ 1 \le n_2 \le \nu + h \\ 1 \le n_3 \le \nu + h \\ n_2 + n_3 = \nu + n_1}} \frac 1{n_1n_2n_3} &\le 8 \int_1^h \int_1^{\nu + x_1 - 1} \frac 1{x_1x_2(\nu + x_1-x_2)} \mathrm dx_2 \mathrm dx_1 \\ 
&= 16 \int_1^h \frac{\ln(\nu + x_1-1)}{x_1(\nu + x_1)} \mathrm dx_1 \\
&\le 16 D_1 + 16 \int_{10}^h \frac{\ln(x_1 - 1)}{x_1^2}\mathrm dx_1,
\end{align*}
for some constant $D_1$, since $\frac{\ln(x-1)}{x}$ is decreasing for $x\ge 10$. The integral converges to a constant as $h \to \infty$, so setting $D = 16 D_1 + 16 \int_{10}^\infty \frac{\ln(x_1-1)}{x_1^2}\mathrm dx_1$ completes the proof.
\end{proof}

The next two lemmas concern triple sums over $\left\|\frac aq \right\|^{-1}$, which arise because of their role in the definition of $F(\a)$ and make use of the previous four lemmas.

\begin{lemma}\label{lem:prep-for-small-case-all-denoms-big}
Fix an integer $h \ge 4$. Then
\begin{equation*}
\sum_{\substack{1 \le n_i \le h - 1 \\ \left\| \sum_i n_i/h\right\| \le 3/h}} \left\|\frac{n_1}{h}\right\|^{-1} \left\|\frac{n_2}{h}\right\|^{-1} \left\|\frac{n_3}{h}\right\|^{-1} \ll h^3,
\end{equation*}
where $n_1, n_2,$ and $n_3$ range over integers.
\end{lemma}
\begin{proof}
We will split into cases based on whether $n_i \le h/2$ or $n_i > h/2$, i.e. based on the value of $\left\|\frac{n_i}{h}\right\|$. 

Assume first that $1 \le n_i \le h/2$ for all $i=1,2,3$. Then $\left\|\frac{n_i}{h}\right\| = \frac{n_i}{h}$, so we have
\begin{align*}
\sum_{\substack{1 \le n_i \le h/2\\ \left\| \sum_i n_i/h \right\| \le 3/h}} \left\|\frac{n_1}{h}\right\|^{-1} \left\|\frac{n_2}{h}\right\|^{-1} \left\|\frac{n_3}{h}\right\|^{-1} &= h^3\sum_{\substack{1 \le n_i \le h/2 \\ \left\| \sum_i n_i/h\right\| \le 3/h}} \frac{1}{n_1n_2n_3}.
\end{align*}
In order to satisfy $\left\| \sum_i n_i/h\right\| \le 3/h$, we must have $n_1 + n_2 + n_3 \in \{3\} \cup [h-3,h+3] \cup [2h-3,2h+3] \cup \{3h-3\}$. There are finitely many possible integer values for $n_1 + n_2 + n_3$; for each one, by Lemma \ref{lem:n1n2n3bounds-no-alpha-all-added}, the sum over $\frac 1{n_1n_2n_3}$ is bounded by an absolute constant. Thus the lemma holds in this case.

Now consider terms where $h/2 < n_i \le h-1$ for all $i$. For each $i$, define $m_i = h-n_i$, so that $1 \le m_i \le h/2$. Then $\left\|\frac{n_i}{h}\right\| = \frac{m_i}{h}$, and $\left\|\sum_i \frac{n_i}{h}\right\| = \left\|3h - \sum_i \frac{m_i}{h}\right\| = \left\|\sum_i \frac{m_i}{h}\right\|$. Then
\begin{equation*}
\sum_{\substack{h/2 < n_i \le h-1 \\ \left\| \sum_i n_i/h \right\| \le 3/h}} \left\|\frac{n_1}{h}\right\|^{-1} \left\|\frac{n_2}{h}\right\|^{-1} \left\|\frac{n_3}{h}\right\|^{-1} \ll  \sum_{\substack{1 \le m_i \le h/2 \\ \left\|\sum_i m_i/h\right\| \le 3/h}} \left\|\frac{m_1}{h}\right\|^{-1}\left\|\frac{m_2}{h}\right\|^{-1} \left\|\frac{m_3}{h}\right\|^{-1},
\end{equation*}
which is precisely the previous case, since $1 \le m_i \le h/2$ for all $i$. Thus this case is also $\ll h^3$.

Finally consider terms where for some $i$, $n_i \in [1,h/2]$, whereas for others $n_i \in (h/2, h-1]$. As in the previous paragraph, we can always flip all three $n_i$'s with $h-n_i$. Moreover, the roles of $n_1,n_2,$ and $n_3$ are entirely symmetric. Thus it suffices to bound those terms where $n_2,n_3 \in [1,h/2]$ and $n_1 \in (h/2,h-1]$. Set $m_1 = h-n_1$. Then
\begin{align*}
\sum_{\substack{h/2 < n_1 \le h-1 \\ 1 \le n_2,n_3 \le h - 1  \\ \left\| \sum_i n_i/h \right\| \le 3/h}} \left\|\frac{n_1}{h}\right\|^{-1} \left\|\frac{n_2}{h}\right\|^{-1} \left\|\frac{n_3}{h}\right\|^{-1} &= h^3\sum_{\substack{1 \le m_1 \le h/2 \\ 1 \le n_2,n_3 \le h/2 \\ \left\| -m_1/h + n_2/h + n_3/h\right\| \le 3/h}} \frac{1}{m_1n_2n_3}.
\end{align*}
Just as before, there are finitely many possible integer values for $-m_1 + n_2 + n_3$ satisfying the constraint that $\left\|\sum_i n_i/h\right\| \le 3/h$. For each value $\nu$, by Lemma \ref{lem:n1n2n3bounds-no-alpha-all-added}, the sum 
\begin{equation*}
\sum_{\substack{1 \le m_1 \le h/2 \\ 1 \le n_2, n_3 \le h/2 \\ -m_1 + n_2 + n_3 = \nu}} \frac 1{m_1n_2n_3}
\end{equation*}
is bounded by a constant, which completes the proof.
\end{proof}

\begin{lemma}\label{lem:prep-for-small-case-first-denom-small}
Let $h \ge 4$ and $1 \le q_1 < h$ be integers. Then
\begin{align*}
&\sum_{\substack{1 \le n_1 \le q_1 - 1 \\ 1 \le n_2,n_3 \le h - 1  \\ \left\| n_1/q_1 + n_2/h + n_3/h\right\| \le 3/h}} \left\|\frac{n_1}{q_1}\right\|^{-1} \left\|\frac{n_2}{h}\right\|^{-1} \left\|\frac{n_3}{h}\right\|^{-1} \ll h^2q_1(\log h),
\end{align*}
where $n_1, n_2,$ and $n_3$ range over integers.
\end{lemma}
\begin{proof}
We will split into cases based on whether each of $\frac{n_1}{q_1}, \frac{n_2}{h}$, and $\frac{n_3}{h}$ lie in $(0,1/2]$ or $(1/2,1)$; for each cases, we will show that the bound holds.
Assume first that all three of $\frac{n_1}{q_1}$, $\frac{n_2}{h}$, and $\frac{n_3}{h}$ lie in $(0,1/2]$.
Note that $\tfrac{n_1}{q_1} +\tfrac{n_2}{h} + \tfrac{n_3}{h} \ge \tfrac 1{q_1} + \tfrac 2h > \tfrac 3h$, so the constraint that $\left\|n_1/q_1 + n_2/h + n_3/h \right\| \le 3/h$ is equivalent to the constraint that
\begin{align*}
\frac{n_1}{q_1} + \frac{n_2}{h} + \frac{n_3}{h} &\in [1-\tfrac 3h, 1 + \tfrac 3h] \cup [2-\tfrac 3h, 2 + \tfrac 3h] \cup [3-\tfrac 3h, 3] \\
\Leftrightarrow \frac{h}{\tilde q_1} n_1 + n_2 + n_3 &\in [h-3,h+3] \cup [2h-3,2h+3] \cup [3h-3,3h].
\end{align*}
These are finitely many intervals, each of bounded size.
Thus these terms are given by
\begin{align*}
\sum_{\substack{1 \le n_1 \le q_1/2 \\ 1 \le n_2,n_3 \le h/2\\ \left\|n_1/q_1 + n_2/h + n_3/h\right\| \le 3/h}} \frac{q_1h^2}{n_1n_2n_3} 
&= \sum_{\substack{[\nu_1,\nu_2] \in \{[h-3,h+3], \\ [2h-3,2h+3], [3h-3,3h]\}}}\sum_{\substack{1 \le n_1 \le q_1/2 \\ 1 \le n_2,n_3 \le h/2 \\ \tfrac{h}{q_1}n_1 + n_2 + n_3\in [h-3,h+3]}} \frac{h^3}{\tfrac{h}{q_1}n_1n_2n_3}.
\end{align*}
We apply Lemma \ref{lem:n1n2n3bounds-with-one-alpha-all-added}, with $\a_1 = h/q_1$ and $[\nu_1,\nu_2] = [h-3,h+3], [2h-3,2h+3],$ or $[3h-3,3h]$, respectively. By Lemma \ref{lem:n1n2n3bounds-with-one-alpha-all-added}, each of these three terms is 
\begin{align*}
&\ll h^3 \frac 1h \log h \Big(1 + \frac{\log h}{\a_1}\Big) \ll h^2\log h\Big(1 + \frac{q_1 \log h}{h}\Big),
\end{align*}
which is $\ll h^2q_1 \log h$, as desired. 

Now assume that all three of $\frac{n_1}{q_1}$, $\frac{n_2}{h}$, and $\frac{n_3}{h}$ lie in $(1/2,1)$. Define $m_1 = q_1 - n_1$, $m_2 = h-n_2$, and $m_3 = h-n_3$, so that 
\begin{equation*}
\sum_{\substack{q_1/2 < n_1 \le q_1-1 \\ h/2 < n_2, n_3 \le h-1 \\ \left\|\tfrac{n_1}{q_1} + \tfrac{n_2}{h} + \tfrac{n_3}{h}\right\| \le \tfrac 3h}}  \left\|\frac{n_1}{q_1}\right\|^{-1} \left\|\frac{n_2}{h}\right\|^{-1} \left\|\frac{n_3}{h}\right\|^{-1} = \sum_{\substack{1 \le m_1 \le q_1/2 \\ 1 \le m_2,m_3 \le h/2 \\ \left\|\tfrac{m_1}{q_1} + \tfrac{m_2}{h} + \tfrac{m_3}{h}\right\| \le \tfrac 3h}} \frac{h^3}{\tfrac h{q_1}m_1m_2m_3}.
\end{equation*}
This is identical to the previous case, which we have already shown to be $\ll h^2q_1\log h$.

We now tackle the cases where not all fractions lie in the same half of $(0,1)$. Assume that $\frac{n_1}{q_1} \in (1/2,1)$ but $\frac{n_2}{h}, \frac{n_3}{h} \in (0,1/2]$. Define $m_1 = q_1 - n_1$, so that
\begin{align*}
\sum_{\substack{q_1/2 < n_1 \le q_1-1 \\ 1 \le n_2, n_3 \le h/2 \\ \left\|\tfrac{n_1}{q_1} + \tfrac{n_2}{h} + \tfrac{n_3}{h}\right\| \le \tfrac 3h}}  \left\|\frac{n_1}{q_1}\right\|^{-1} \left\|\frac{n_2}{h}\right\|^{-1} \left\|\frac{n_3}{h}\right\|^{-1} &= \sum_{\substack{1 \le m_1 \le q_1/2 \\ 1 \le n_2,n_3 \le h/2\\ \left\|-\tfrac{m_1}{q_1} + \tfrac{n_2}{h} + \tfrac{n_3}{h}\right\| \le \tfrac 3h}}  \frac{h^3}{\tfrac{h}{q_1}m_1 n_2n_3}.
\end{align*}
The constraint that $\left\|-\tfrac{m_1}{q_1} + \tfrac{n_2}{h} + \tfrac{n_3}{h}\right\| \le \tfrac 3h$ is equivalent to the constraint that $-\tfrac{h}{q_1}m_1 + n_2 + n_3$ lies in one of the intervals $[-3,3]$ or $[h-3,h+3]$. 
Applying Lemma \ref{lem:n1n2n3boundswith-one-alpha} to the sum over $m_1,n_2,n_3$, with $\a_1 = \frac{h}{q_1}$ and $[\nu_1,\nu_2]$ equal to each of these intervals respectively, we get that
\begin{equation*}
\sum_{\substack{q_1/2 < n_1 \le q_1-1 \\ 1 \le n_2,n_3 \le h/2 \\ \left\|\tfrac{n_1}{q_1} + \tfrac{n_2}{h} + \tfrac{n_3}{h}\right\| \le \tfrac 3h}}  \left\|\frac{n_1}{q_1}\right\|^{-1} \left\|\frac{n_2}{h}\right\|^{-1} \left\|\frac{n_3}{h}\right\|^{-1} \ll h^3 \frac{\log h}{(h/q_1)}\left(1 + \frac 1{(h/q_1)}\right) \ll h^2q_1\log h.
\end{equation*}

If $\frac{n_1}{q_1} \in (0,1/2]$ but $\frac{n_2}{h}, \frac{n_3}{h} \in (1/2,1)$, then we can once again replace $n_1$ by $m_1 = q_1 - n_1$, $n_2$ by $m_2 = h-n_2$, and $n_3$ by $m_3 = h-n_3$ to revert to the previous case.

Finally assume that $\frac{n_1}{q_1} \in (0,1/2]$, $\frac{n_2}{h}\in(1/2,1)$, and $\frac{n_3}{h} \in (0,1/2]$. The roles of $n_2$ and $n_3$ are symmetric, and we can always replace all three $n_i$'s by the corresponding $m_i$ value, so this is the only remaining case. 

Define $m_2 = h-n_2$, so that
\begin{align*}
\sum_{\substack{1 \le n_1 \le q_1/2 \\ h/2 < n_2 \le h-1 \\ 1\le n_3 \le h/2 \\ \left\|\tfrac{n_1}{q_1} + \tfrac{n_2}{h} + \tfrac{n_3}{h}\right\| \le \tfrac 3h}}  \left\|\frac{n_1}{q_1}\right\|^{-1} \left\|\frac{n_2}{h}\right\|^{-1} \left\|\frac{n_3}{h}\right\|^{-1} &= \sum_{\substack{1 \le n_1 \le q_1/2 \\ 1 \le m_2 \le h/2 \\ 1\le n_3 \le h/2 \\ \left\|\tfrac{n_1}{q_1} - \tfrac{m_2}{h} + \tfrac{n_3}{h}\right\| \le \tfrac 3h}}  \frac{h^3}{\tfrac{h}{q_1}n_1 m_2n_3}.
\end{align*}
The constraint that $\left\|\tfrac{n_1}{q_1} - \tfrac{m_2}{h} + \tfrac{n_3}{h}\right\| \le 3h$ is equivalent to the constraint that $-\tfrac{h}{q_1}n_1 - m_2 + m_3$ lies in one of the intervals $[-3,3]$ or $[h-3,h+3]$. Applying Lemma \ref{lem:n1n2n3bounds-with-one-alpha-n2-subtracted} to the sum over $n_1,m_2,n_3$ with $\a_1 = \frac{h}{q_1}$ and $[\nu_1,\nu_2]$ equal to each of these intervals respectively, we get that
\begin{align*}
\sum_{\substack{1 \le n_1 \le q_1/2 \\ h/2 < n_2 \le h-1 \\ 1\le n_3 \le h/2 \\ \left\|\tfrac{n_1}{q_1} + \tfrac{n_2}{h} + \tfrac{n_3}{h}\right\| \le \tfrac 3h}}  \left\|\frac{n_1}{q_1}\right\|^{-1} \left\|\frac{n_2}{h}\right\|^{-1} \left\|\frac{n_3}{h}\right\|^{-1} &\ll h^3 \left(\frac{q_1 \log h}{h} + 1\right) \frac{q_1\log (h/q_1 + 1)}{h}.
\end{align*}
Since $\tfrac{\log x}{x}$ is uniformly bounded for $x \ge 1$, we have $\tfrac{q_1}{h} \log \tfrac{h}{q_1} \ll 1$, so these terms are also $\ll h^2q_1\log h$, which completes the proof.

\end{proof}

Finally, the following lemma directly bounds a sum over triple products of $F(a_i/q_i)$.

\begin{lemma}\label{lem:triple_sum_F}
Let $h \in \N$ with $h \ge 4$ and let $d_1\ge 1$ and $d_2\ge 2$ be positive integers with $d_1|d_2$ and $d_2 < h$. Then
\begin{equation*}
\sum_{\substack{1 \le n_1 < d_1 \\ 1 \le n_2 < d_2}} F\left(\frac{n_1}{d_1}\right) F\left(\frac{n_2}{d_2}\right) F\left(\frac{n_1}{d_1} - \frac{n_2}{d_2}\right) \ll hd_1^2 + d_1^2d_2 \log d_2,
\end{equation*}
where $n_1$ and $n_2$ range over integers.
\end{lemma}
\begin{proof}
Write $f:= \frac{d_2}{d_1}$. Then $\frac{n_1}{d_1} - \frac{n_2}{d_2} = \frac{fn_1 - n_2}{d_2}$. Since $d_2 < h$, $F\left(\frac{fn_1-n_2}{d_2}\right) = \left\|\frac{fn_1-n_2}{d_2}\right\|^{-1}$ unless $fn_1-n_2 = 0$. Moreover, in the range where $1 \le n_1 < d_1$ and $1 \le n_2 < d_2$, $F\left(\frac{n_1}{d_1}\right) = \left\|\frac{n_1}{d_1}\right\|^{-1}$ and $F\left(\frac{n_2}{d_2}\right) = \left\|\frac{n_2}{d_2}\right\|^{-1}$. Thus
\begin{multline*}
\sum_{\substack{1 \le n_1 < d_1 \\ 1 \le n_2 < d_2}} F\left(\frac{n_1}{d_1}\right) F\left(\frac{n_2}{d_2}\right) F\left(\frac{n_1}{d_1} - \frac{n_2}{d_2}\right) \\
= \sum_{\substack{1 \le n_1 < d_1 \\ 1 \le n_2 < d_2 \\ fn_1 = n_2}} h\left\|\frac{n_1}{d_1}\right\|^{-1}\left\|\frac{n_2}{d_2}\right\|^{-1} + \sum_{\substack{1 \le n_1 < d_1 \\ 1 \le n_2 < d_2 \\ fn_1 \ne n_2}} \left\|\frac{n_1}{d_1}\right\|^{-1}\left\|\frac{n_2}{d_2}\right\|^{-1} \left\|\frac{fn_1-n_2}{d_2}\right\|^{-1}.\\
\end{multline*}

The first sum is bounded by
\begin{equation*}
\sum_{\substack{1 \le n_1 < d_1 \\ 1 \le n_2 < d_2 \\ fn_1 = n_2}} h\left\|\frac{n_1}{d_1}\right\|^{-1}\left\|\frac{n_2}{d_2}\right\|^{-1}  = h\sum_{1 \le n_1 < d_1} \left\|\frac{n_1}{d_1}\right\|^{-2} \le 2hd_1^2 \sum_{1 \le n_1 \le d_1/2} \frac 1{n_1^2} \ll hd_1^2.
\end{equation*}

It remains to bound the second sum. As in the proofs of Lemmas \ref{lem:prep-for-small-case-all-denoms-big} and \ref{lem:prep-for-small-case-first-denom-small}, we will split into cases based on whether $\tfrac{n_1}{d_1}$ and $\tfrac{n_2}{d_2}$ are in $(0,1/2]$ or $(1/2,1)$. 

Assume first that both $n_1/d_1, n_2/d_2 \in (0,1/2]$, or that both $n_1/d_1$ and $n_2/d_2$ are in $(1/2,1)$. In the latter case, we can substitute $m_1 = d_1 -n_1$ and $m_2 = d_2-n_2$ to revert precisely to the former case, so it suffices to assume that both $n_1/d_1$ and $n_2/d_2$ are in $(0,1/2]$. Then
\begin{align*}
\sum_{\substack{1 \le n_1 \le d_1/2 \\ 1 \le n_2 \le d_2/2 \\ fn_1 \ne n_2}} \frac{d_1d_2}{n_1n_2} \left\|\frac{fn_1-n_2}{d_2}\right\|^{-1} &= \sum_{\substack{1 \le n_1 \le d_1/2 \\ 1 \le n_2 \le d_2/2 \\ fn_1 > n_2}} \frac{d_1d_2^2}{n_1n_2(fn_1-n_2)} + \sum_{\substack{1 \le n_1 \le d_1/2 \\ 1 \le n_2 \le d_2/2 \\ fn_1 < n_2}} \frac{d_1d_2^2}{n_1n_2(n_2-fn_1)}.
\end{align*}
By applying Lemma \ref{lem:n1n2n3boundswith-one-alpha} with $\a_1 = f$ and $\nu_1 = \nu_2 = 0$, the first sum is bounded by $\ll d_2^3 \frac{\log d_2}{f^2} = d_1^2d_2 \log d_2$. For the second sum, we can achieve a bound that is somewhat stronger than the bound furnished by Lemma \ref{lem:n1n2n3bounds-with-one-alpha-n2-subtracted} in this special case. Specifically we have, writing $n_3 = n_2 - fn_1$,
\begin{align*}
d_2^3 \sum_{\substack{1 \le n_1 \le d_1/2 \\ 1 \le n_2 \le d_2/2 \\ 1 \le n_3 \le d_2/2 \\ fn_1 - n_2 + n_3 = 0}} \frac{1}{fn_1n_2n_3} &= d_2^3 \sum_{1 \le n_1 \le d_1/2} \frac 1{fn_1} \sum_{\substack{fn_1 \le n_2 \le d_2/2 \\ 1 \le n_3 \le d_2/2 \\ fn_1 + n_3 = n_2}} \frac 1{n_2-n_3} \left(\frac 1{n_3} - \frac 1{n_2}\right) \\
&= d_2^3 \sum_{1 \le n_1 \le d_1/2} \frac 1{(fn_1)^2} \sum_{1 \le n_3 \le d_2/2-fn_1} \left(\frac 1{n_3} - \frac 1{n_3 + fn_1}\right) \\
&\ll d_2^3 \sum_{1 \le n_1 \le d_1/2} \frac 1{(fn_1)^2} \log d_2 \\
&\ll d_2^3\frac{\log d_2}{f^2} = d_1^2d_2 \log d_2.
\end{align*} 
Thus in this case, the second sum is $\ll d_1^2d_2\log d_2$.

Now assume that $n_1/d_1 \in (1/2,1)$ but $n_2/d_2 \in (0,1/2]$; by swapping both $n_i$'s with $m_i = d_i-n_i$, this is the same as the case that $n_1/d_1 \in (0,1/2]$ but $n_2/d_2 \in (1/2,1)$, so it is our only remaining case.

On substituting $m_1 = d_1-n_1$, the sum in this case becomes
\begin{align*}
\sum_{\substack{1 \le m_1 \le d_1/2 \\ 1 \le n_2 \le d_2/2 \\ fm_1 + n_2 < d_2}} \frac{d_1d_2}{m_1n_2} \left\|\frac{fm_1+n_2}{d_2}\right\|^{-1} &= \sum_{\substack{1 \le m_1 \le d_1/2 \\ 1 \le n_2 \le d_2/2 \\ fm_1 + n_2 \le d_2/2}} \frac{d_1d_2^2}{m_1n_2(fm_1 + n_2)} + \sum_{\substack{1 \le m_1 \le d_1/2 \\ 1 \le n_2 \le d_2/2 \\ d_2/2 < fm_1 + n_2 < d_2}} \frac{d_1d_2^2}{m_1n_2(d_2-n_2-fm_1)}.
\end{align*}
The first sum is
\begin{equation*}
\le d_1d_2^2 \sum_{\substack{1 \le m_1 \le d_1/2 \\ 1 \le n_2 \le d_2/2 \\ fm_1 + n_2 < d_2}} \frac{1}{fm_1^2n_2} \ll \frac{d_1d_2^2}{f} \log d_2 = d_1^2d_2\log d_2. 
\end{equation*}

As for the second sum, setting $n_3 = d_2-n_2-fm_1$, we can bound it by applying Lemma \ref{lem:n1n2n3bounds-with-one-alpha-all-added} where $\a_1 = f$ and $\nu_1 = \nu_2 = d_2$ to get that
\begin{align*}
d_2^3 \sum_{\substack{1 \le m_1 \le d_1/2 \\ 1 \le n_2 \le d_2/2 \\ 1 \le n_3 \le d_2/2 \\ fm_1 + n_2 + n_3 = d_2}} \frac 1{fm_1n_2n_3} &\ll d_2^3 \frac 1{d_2}\log d_2 \left( 1 + \frac{\log d_2}{f}\right) \\
&\ll d_2^2 \log d_2 + d_1d_2 \log d_2,
\end{align*}
both of which are $\ll d_1^2d_2 \log d_2.$ This completes the proof.
\end{proof}

\subsection{Bounding $T_1$: terms with $gx \ge h$}

Define
\begin{equation}\label{eq:T1-definition}
T_1 = \sum_{\substack{g,x,y,z|q \\ gx \ge h}} \frac{\mu(gxyz)^2}{\phi(g)^3\phi(xyz)^2} \sum_{\substack{a_1,a_2,a_3 \\ (a_1,gyz) = \cdots = 1 \\ a_1/gyz + \cdots \in \Z}} F\left(\frac{a_1}{gyz}\right)F\left(\frac{a_2}{gxz}\right)F\left(\frac{a_3}{gxy}\right).
\end{equation}

For these terms, the rough argument that ``the probability that each of $\tfrac{a_2}{q_2}$ and $\tfrac{a_3}{q_3}$ are sufficiently small is about $\tfrac 1h$, making the size of the sum $h^{1+\ep}$ instead of $h^{3+\ep}$'' can be made precise, although some of the counting arguments are rather involved, and rely on the lemmas of the previous section. Nevertheless, we will use this basic idea to prove the following bound.
\begin{lemma}\label{lem:T1}
Let $h \ge 4$, let $q$ be the product of primes $p \le h^4$, and define $T_1$ by \eqref{eq:T1-definition}. Then
\[T_1 \ll h(\log h)^5.\]
\end{lemma}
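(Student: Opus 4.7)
The plan is to apply the approximate-numerator bound \eqref{eq:h-approximate-numerator-bound-on-F} to each factor $F(a_i/q_i)$ appearing in \eqref{eq:T1-definition}, and then to convert the $a$-sum into a sum over approximate numerators $n_i$. Since $gx \ge h$, we have $q_2, q_3 \ge h$ and hence $\tilde q_2 = \tilde q_3 = h$; the value $\tilde q_1 = \min\{q_1, h\}$ is either $q_1$ or $h$ depending on $(g,x,y,z)$. The integrality constraint $\sum_i a_i/q_i \in \Z$ has two effects: it forces $xa_1 + ya_2 \equiv 0 \pmod z$, restricting the number of valid pairs $(a_1, a_2)$ with prescribed numerators $(n_1, n_2)$ to $\ll \phi(q_1)\phi(gx)/(\tilde q_1 h)$; and it determines $n_3$ from $(n_1, n_2)$ via $n_3/h \equiv -n_1/\tilde q_1 - n_2/h \pmod 1$ up to an $O(1)$ shift.

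After these reductions the inner sum over $(a_1, a_2, a_3)$ becomes, essentially,
\[
\ll \frac{\phi(q_1)\phi(gx)}{\tilde q_1\, h}\sum_{\substack{n_1 \in [1,\tilde q_1/2]\\ n_2 \in [1,h/2]}}\left\|\frac{n_1}{\tilde q_1}\right\|^{-1}\left\|\frac{n_2}{h}\right\|^{-1}\left\|\frac{n_1}{\tilde q_1}+\frac{n_2}{h}\right\|^{-1},
\]
which is of the same form as the sum in Lemma \ref{lem:triple_sum_F} (with $d_1 = \tilde q_1$, $d_2 = h$, and a harmless sign flip). An argument parallel to the proof of that lemma gives the bound $O(h\tilde q_1^2)$, provided the degenerate contribution where $\|n_1/\tilde q_1 + n_2/h\| \ll 1/h$ is handled separately using the trivial bound $F \le h$. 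Simplifying $\phi(q_1)\phi(gx) = \phi(g)^2 \phi(x)\phi(y)\phi(z)$, the total contribution of each quadruple $(g,x,y,z)$ to $T_1$ is $O(\tilde q_1/(\phi(g)\phi(x)\phi(y)\phi(z)))$.

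Using the uniform bound $\tilde q_1 \le h$ and summing over pairwise coprime squarefree quadruples dividing $q$, I would obtain
\[
T_1 \ll h\sum_{\substack{g,x,y,z\mid q\\ \text{coprime squarefree}}}\frac{1}{\phi(g)\phi(x)\phi(y)\phi(z)} \le h \Bigl(\sum_{n\mid q}\frac{\mu^2(n)}{\phi(n)}\Bigr)^4 \ll h(\log h)^4,
\]
using the Mertens-type estimate $\sum_{n\mid q}\mu^2(n)/\phi(n) = \prod_{p \le h^4}(1+1/(p-1)) \ll \log h$. The principal obstacle is the careful translation of the integrality constraint into the $n$-language: showing that $n_3$ is indeed determined up to $O(1)$ by $(n_1, n_2)$, correctly counting preimages under the $a \mapsto n$ map while respecting both the coprimality and the modulo-$z$ constraints, and establishing the variant of Lemma \ref{lem:triple_sum_F} appropriate to $d_2 = h$ (which the stated lemma excludes). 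A secondary difficulty is verifying that the case $q_1 < h$ (so $\tilde q_1 = gyz$) contributes at most $O(h(\log h)^{3}(\log\log h)^{O(1)})$, which is absorbed into the main bound.
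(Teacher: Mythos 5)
Your approach is essentially the paper's: pass to approximate numerators, use the integrality constraint to count the residue triples $(a_1,a_2,a_3)$ compatible with a given $(n_1,n_2,n_3)$, and then bound the resulting weighted $n$-sum. Two points deserve attention, neither of which is ultimately fatal, but both of which change the details.

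First, the claimed count of valid pairs, $\ll \phi(q_1)\phi(gx)/(\tilde q_1 h)$, is not a valid pointwise bound. For a \emph{fixed} $n_1$, the number of $a_1 \pmod{q_1}$ coprime to $q_1$ with $n(a_1,q_1)=n_1$ is $\ll q_1/\tilde q_1$ trivially; it is not bounded by $\phi(q_1)/\tilde q_1$, since that is only an average over $n_1$, and the weights $\|n_1/\tilde q_1\|^{-1}$ are far from constant, so one cannot simply replace the count by its mean. The same objection applies to the $a_2$ count. The paper uses the trivial counts and obtains $\ll (q_1/h+1)\cdot gx/h$. The good news is that this correct (larger) count still delivers the theorem: with $q_1gx = g^2xyz$ and $\tilde q_1 \le h$, the per-quadruple contribution becomes $\ll h\, g^2xyz/\bigl(\phi(g)^3\phi(x)^2\phi(y)^2\phi(z)^2\bigr)$, whose sum over coprime squarefree $g,x,y,z\mid q$ is the Euler product $\prod_{p\le h^4}\bigl(1 + p^2/(p-1)^3 + 3p/(p-1)^2\bigr) \ll (\log h)^4$, exactly parallel to your cleaner $\bigl(\sum_{n\mid q}\mu^2(n)/\phi(n)\bigr)^4$. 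So the $\phi$-cancellation you rely on is unnecessary (and unjustified), but the bound survives without it.

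Second, the tool the paper uses for the $n$-sum is Lemma~\ref{lem:n1n2n3boundswithuptotwoalphas}, not a variant of Lemma~\ref{lem:triple_sum_F}. As you note, Lemma~\ref{lem:triple_sum_F} is stated for $d_1<d_2<h$ and does not directly cover $d_2=h$; more to the point, Lemma~\ref{lem:n1n2n3boundswithuptotwoalphas} is tailored to exactly the structure at hand, a sum over $n_1, n_2$ with one scaled variable $\alpha_1 n_1$ (here $\alpha_1 = h/\tilde q_1$) and one unscaled variable tied by a linear constraint. The paper applies it, after a mild recentering so that $n_1+n_2+n_3 = O(1)$ with not all $n_i$ of the same sign, to get the bound $\ll \tilde q_1 h^2$ on the $n$-sum. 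Your claimed bound $h\tilde q_1^2$ is stronger (since $\tilde q_1\le h$) and would also suffice, but it needs the $\alpha$-scaled lemma rather than the Farey-spacing argument in the proof of Lemma~\ref{lem:triple_sum_F}. Finally, the ``secondary difficulty'' you flag for $q_1 < h$ is not a separate case: once you use the trivial residue count, the argument is uniform in $\tilde q_1 = \min\{q_1,h\}$ with no extra loss.
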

\begin{proof}
Recall that $q_1 = gyz$, $q_2 = gxz$, and $q_3 = gxy$. Since $gx \ge h$, $gxy$ and $gxz$ (i.e., $q_2$ and $q_3$) must also both be $\ge h$. Recall the notation that $\tilde q_i = \min\{q_i,h\}$, so that $\tilde q_2= \tilde q_3=h$. 

Since $\frac{a_1}{q_1} + \frac{a_2}{q_2} + \frac{a_3}{q_3} \in \Z$, the sum $\frac{n(a_1,q_1)}{\tilde q_1} + \frac{n(a_2,q_2)}{\tilde q_2} + \frac{n(a_3,q_3)}{\tilde q_3}$ satisfies
\begin{align*}
&\left\|\frac{n(a_1,q_1)}{\tilde q_1} + \frac{n(a_2,q_2)}{\tilde q_2} + \frac{n(a_3,q_3)}{\tilde q_3}\right\| \le \left\|\frac{a_1}{q_1} + \frac{a_2}{q_2}+\frac{a_3}{q_3}\right\| + \sum_{i=1}^3 \left\|\frac{n(a_i,q_i)}{\tilde q_i} - \frac{a_i}{q_i}\right\| \le \frac 3h,
\end{align*}
since $\left|\frac aq - \frac{n(a,q)}{\tilde q} \right| < \frac 1h$ always.
We can then bound the sum by replacing the fractions $\frac{a_i}{q_i}$ by their $h$-approximations $\tfrac{n(a_i,q_i)}{q_i}$. Precisely, we have
\begin{align*}
T_1 &= \sum_{\substack{g,x,y,z|q \\ gx \ge h}} \frac{\mu(gxyz)^2}{\phi(g)^3\phi(xyz)^2} \sum_{\substack{a_1, a_2, a_3 \\ (a_i,q_i) = 1 \\ \sum_i a_i/q_i \in \mathbb Z}} F\left(\frac{a_1}{q_1}\right)F\left(\frac{a_2}{q_2}\right)F\left(\frac{a_3}{q_3}\right) \\
&\ll \sum_{\substack{g,x,y,z|q \\ gx \ge h}} \frac{\mu(gxyz)^2}{\phi(g)^3\phi(xyz)^2} \sum_{\substack{a_1, a_2, a_3 \\ (a_i,q_i) = 1 \\ \sum_i a_i/q_i \in \mathbb Z}} \left\|\frac{n(a_1,q_1)}{\tilde q_1}\right\|^{-1} \left\|\frac{n(a_2,q_2)}{\tilde q_2}\right\|^{-1} \left\|\frac{n(a_3,q_3)}{\tilde q_3}\right\|^{-1} \\
&\ll \sum_{\substack{g,x,y,z|q \\ gx \ge h}} \frac{\mu(gxyz)^2}{\phi(g)^3\phi(xyz)^2} \sum_{\substack{1 \le n_1, n_2, n_3 \le \tilde q_i - 1 \\ \left\|\sum_i n_i/\tilde q_i\right\| \le 3/h}} \left\|\frac{n_1}{\tilde q_1}\right\|^{-1} \left\|\frac{n_2}{\tilde q_2}\right\|^{-1} \left\|\frac{n_3}{\tilde q_3}\right\|^{-1} \sum_{\substack{a_1, a_2, a_3 \\ (a_i,q_i) = 1 \\ \sum_i a_i/q_i \in \mathbb Z \\ n(a_i,q_i) = n_i}} 1.\\
\end{align*}

The inside sum is the number of triplets $a_1,a_2,a_3$ with $n(a_i,q_i) = n_i$ for all $i$, $(a_i,q_i) = 1$, and $\sum_i \tfrac{a_i}{q_i} \in \Z$. The constraint that $n(a_i,q_i) = n_i$ implies that each $a_i$ lies in an interval of length $\ll \frac{q_i}h + 1$; that is, for $q_i \ge h$, $\tfrac{q_i}{h}n_i \le a_i \le \tfrac{q_i}{h}(n_i + 1)$. 

The constraint that $\sum_i \frac{a_i}{q_i} \in \Z$, after multiplying out denominators, is equivalent to the constraint that
\begin{equation}\label{eq:three-term-modular-equation-gxyz}
a_1x + a_2y + a_3z \equiv 0 \mod{gxyz}.
\end{equation}
Once the $q_i$'s (or equivalently $g,x,y,$ and $z$) are fixed, there are $\ll \frac{q_1}h + 1$ choices of $a_1$ such that $n(a_1,q_1) = n_1$. Once $a_1$ is fixed, $a_2$ is determined mod $z$ by \eqref{eq:three-term-modular-equation-gxyz}. Since $1 \le a_2 \le gxz$, fixing $a_2$ is equivalent to choosing a congruence class mod $gx$ for $a_2$; there are $\ll \frac{gx}h + 1$ choices of this congruence class such that $a_2$ lies within the interval where $n(a_2,q_2) = n_2$. Since $gx \ge h$ by assumption, $\frac{gx}h + 1 \ll \frac{gx}{h}$. Once $a_1$ and $a_2$ have been fixed, $a_3$ is entirely determined by \eqref{eq:three-term-modular-equation-gxyz}. Thus the total number of triplets $a_1,a_2,a_3$ satisfying all constraints is $\ll \left(\frac{q_1}h + 1\right)\frac{gx}{h}$.

Thus $T_1$ is bounded by
\begin{align*}
T_1 &\ll \sum_{\substack{g,x,y,z|q \\ gx \ge h}} \frac{\mu(gxyz)^2}{\phi(g)^3\phi(xyz)^2} \left(\frac{q_1}{h} + 1\right)\frac{gx}{h}\sum_{\substack{1 \le n_i \le \tilde q_i-1 \\ \left\|\sum_i n_i/\tilde q_i\right\| \le 3/h}} \left\|\frac{n_1}{\tilde q_1}\right\|^{-1}\left\|\frac{n_2}h\right\|^{-1}\left\|\frac{n_3}h\right\|^{-1}.
\end{align*}

Consider first those terms where $\tilde q_1 = h$. Thus $\tfrac{q_1}{h} \gg 1$, and by Lemma \ref{lem:prep-for-small-case-all-denoms-big}, the inside sum is $\ll h^3$. This implies that the terms with $\tilde q_1 = h$ are bounded by
\begin{align*}
&\ll \sum_{\substack{g,x,y,z|q \\ gx \ge h}}\frac{\mu(gxyz)^2}{\phi(g)^3\phi(xyz)^2} \frac{q_1}{h}\frac{gx}{h}h^3 \\
&\ll h \sum_{\substack{g,x,y,z,|q \\ gx \ge h}} \frac{\mu(gxyz)^2}{\phi(g)^3\phi(xyz)^2}g^2xyz\text{, since $q_1 = gyz$}.
\end{align*}
Recalling that $q$ is the product of all primes $p \le h^4$, this sum is
\begin{align*}
&\ll h \prod_{p \le h^4} \left( 1 + \frac{p^2}{(p-1)^3} + \frac{3p}{(p-1)^2}\right) \ll h(\log h)^4.
\end{align*}

The remaining terms are those where $\tilde q_1 = q_1 < h$. By applying Lemma \ref{lem:prep-for-small-case-first-denom-small} to the inside sum, the terms with $\tilde q_1 = q_1 < h$ are bounded by
\begin{align*}
&\ll \sum_{\substack{g,x,y,z|q \\ gx \ge h}}\frac{\mu(gxyz)^2}{\phi(g)^3\phi(xyz)^2} \frac{gx}{h} \left(h^2q_1 \log h\right) \\
&\ll h \log h \sum_{\substack{g,x,y,z|q \\ gx \ge h}}\frac{\mu(gxyz)^2g^2xyz}{\phi(g)^3\phi(xyz)^2}\text{, since $q_1 = gyz$}, \\
&\ll h (\log h)\prod_{p \le h^4} \left( 1 + \frac{p^2}{(p-1)^3} + \frac{3p}{(p-1)^2}\right) \ll h(\log h)^5.
\end{align*}
Thus $T_1 \ll h(\log h)^4 + h(\log h)^5 \ll h(\log h)^5$, as desired.
\end{proof}

\subsection{Bounding $T_2$: terms with $gx,gy, gz$ small and $a_2,a_3$ large}

We now consider $T_2$, which is the sum of terms in \eqref{eq:V3-bound-gxyz-and-Fs} where $gx$, $gy,$ and $gz$ are all $< h$ and $\left\|\frac{a_2}{gxz}\right\| \ge \frac 1h,$  and $\left\|\frac{a_3}{gxy}\right\| \ge \frac 1h$. That is, define
\begin{equation}\label{eq:T2def}
T_2 := \sum_{\substack{g,x,y,z|q \\ x,y,z < h/g}} \frac{\mu(gxyz)^2}{\phi(g)^3\phi(xyz)^2} \sum_{\substack{a_1,a_2,a_3 \\ (a_1,gyz) = \cdots = 1 \\ a_1/gyz + \cdots \in \Z \\ \|a_2/gxz\| \ge 1/h \\ \|a_3/gxy\| \ge 1/h}} F\left(\frac{a_1}{gyz}\right)F\left(\frac{a_2}{gxz}\right)F\left(\frac{a_3}{gxy}\right).
\end{equation}
The strategy for bounding $T_2$ is very different from that used to bound $T_1$. Intuitively, since the fractions $\tfrac{a_2}{gxz}$ and $\tfrac{a_3}{gxy}$ are far from an integer, we are now considering terms where the values of $F\left(\tfrac{a_2}{gxz}\right)$ and $F\left(\frac{a_3}{gxy}\right)$ are relatively small, except perhaps at the boundary where $\tfrac{a_2}{gxz}$ and $\tfrac{a_3}{gxy}$ are very close to $\tfrac 1h$. Since the denominators are loosely constrainted to be small, there cannot be too many points on this boundary. We will prove a precise bound in the following lemma.

\begin{lemma}\label{lem:T2}
Let $h \ge 4$, let $q$ be the product of primes $p \le h^4$, and let $T_2$ be defined as in \eqref{eq:T2def}. Then
\[T_2 \ll h(\log h)^4 (\log \log h)^2.\]
\end{lemma}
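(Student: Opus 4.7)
The plan is to adapt the proof of Lemma~\ref{lem:T1} to the regime $gx, gy, gz < h$, where now all three $\tilde q_i$ may be strictly less than $h$, and where the restriction $\|a_2/q_2\|,\|a_3/q_3\| \ge 1/h$ keeps the corresponding $F$-values bounded well below the trivial cap $h$. As before, I begin by passing to $h$-approximate numerators $n_i = n(a_i,q_i)$, centered so that $|n_i| \le \tilde q_i/2$, and use the bound $F(a_i/q_i) \le 2\tilde q_i/|n_i|$ from \eqref{eq:h-approximate-numerator-bound-on-F}. The sum constraint $\sum a_i/q_i \in \Z$ translates to $\sum n_i/\tilde q_i = O(1/h)$ up to an integer shift; the non-zero integer cases are handled by identical arguments.

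For each prescribed triple $(n_1,n_2,n_3)$ I bound the number of valid $(a_1,a_2,a_3)$ by $\ll (q_i/\tilde q_i + 1)(q_j/\tilde q_j + 1)$ for any chosen pair $\{i,j\}$, since the congruence $a_1 x + a_2 y + a_3 z \equiv 0 \pmod{gxyz}$ lets any two of the $a_\ell$'s determine the third. I then estimate the resulting sum over $(n_1,n_2,n_3)$ by fixing $n_2,n_3$ (which, by the $T_2$ restriction, satisfy $|n_j|/\tilde q_j \ge 1/h$) and observing that $n_1$ must lie in an interval of $O(\tilde q_1/h + 1)$ integers near $n_1^{\ast} = -\tilde q_1(n_2/\tilde q_2 + n_3/\tilde q_3)$. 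Splitting into the generic case $|n_1^{\ast}| \gg \tilde q_1/h$ and the central case $|n_1^{\ast}| \ll \tilde q_1/h$, I expect Lemma~\ref{lem:n1n2n3boundswithuptotwoalphas} applied with $\alpha_1 = \max(\tilde q_2,\tilde q_3)$ and $\alpha_2 = \min(\tilde q_2,\tilde q_3)$ to control the generic contribution, with the central contribution being smaller precisely because the $T_2$ restriction forbids $n_2,n_3$ from simultaneously being tiny.

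Combining the counting factor with the $n$-sum bound gives a bound on the inner sum for each $(g,x,y,z)$; I then sum over squarefree $g,x,y,z \mid q$ with $gx,gy,gz < h$ using standard mean-value estimates such as $\sum_{v \le Y} \mu^2(v) v^2/\phi(v)^2 \ll Y$, $\sum_v \mu^2(v)/\phi(v)^2 \ll 1$, and $\sum_{g \le Y} (g/\phi(g))^2/g \ll (\log Y)(\log\log Y)^2$. The two factors of $\log h$ in the final bound come from two of the variable sums (over $y$ and $z$), while the $(\log\log h)^2$ comes from the average order of $(g/\phi(g))^2$ over squarefree $g$; this yields $T_2 \ll h(\log h)^2 (\log\log h)^2$.

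The main obstacle is the detailed case analysis in the sum over $n_i$: one must correctly handle the central range where $n_1^\ast$ is close to zero and identify the parameters of Lemma~\ref{lem:n1n2n3boundswithuptotwoalphas} so that the total bound collects exactly two factors of $\log h$ (rather than four as in $T_3$). The $T_2$ restriction $|n_2|,|n_3|$ bounded away from zero is precisely what allows this saving of two logarithms relative to the $T_3$ estimate.
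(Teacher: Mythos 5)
The paper's proof of Lemma~\ref{lem:T2} hinges on a change of variables that your sketch never introduces and that is doing essentially all of the work. Writing $\frac{a_1}{gyz} = \frac{b}{gy} - \frac{c}{gz}$, $\frac{a_2}{gxz} = \frac{c}{gz} - \frac ax$, $\frac{a_3}{gxy} = \frac ax - \frac b{gy}$ (mod $1$) replaces the three constrained variables $(a_1,a_2,a_3)$ with unconstrained Farey parameters $a \bmod x$, $b \bmod gy$, $c \bmod gz$, so the condition $\sum a_i/q_i \in \Z$ disappears entirely. The subsequent argument then splits $y$ and $z$ dyadically, exploits the spacing of Farey fractions with bounded denominator to show the counts $C_{\ell,n_\ell}$, $C_{m,n_m}$ are $O(1)$, and appeals to Lemma~\ref{lem:triple_sum_F} -- not Lemma~\ref{lem:n1n2n3boundswithuptotwoalphas} -- to close the dyadic sum. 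The two powers of $\log h$ then come from $\sum_{g|q} g^2/\phi(g)^3$ and $\sum_{x} 1/\phi(x)$, not, as you claim, from sums over $y$ and $z$; those sums are absorbed by the convergent dyadic sum $\sum_{\ell\le m} 2^{-2m}$.

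Your proposed route -- pass to $h$-approximate numerators, count triples by $(q_i/\tilde q_i + 1)(q_j/\tilde q_j + 1)$, then apply Lemma~\ref{lem:n1n2n3boundswithuptotwoalphas} -- fails quantitatively in the $T_2$ regime. Two concrete problems. First, your claimed parameters $\alpha_1 = \max(\tilde q_2,\tilde q_3)$, $\alpha_2 = \min(\tilde q_2,\tilde q_3)$ do not correspond to how Lemma~\ref{lem:n1n2n3boundswithuptotwoalphas} would be applied: normalizing the constraint $\sum n_i/\tilde q_i = O(1/h)$ requires the parameters to be \emph{ratios} of the $\tilde q_i$ (as in the $T_1$ proof, where effectively $\alpha_1 = h/\tilde q_1$, $\alpha_2 = 1$), not the $\tilde q_i$ themselves. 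Second, and more seriously, in $T_2$ one can have all three $q_i \ge h$ while $gx, gy, gz < h$ (e.g.\ $g=1$, $x,y,z \asymp h^{1/2}$). There the $n$-sum of Lemma~\ref{lem:n1n2n3boundswithuptotwoalphas} is of size $\asymp h^3$ and your per-$n$-triple counting bound degenerates to $\ll q_1/h + 1 \asymp q_1/h$ (the factor from the second variable is $\ll gx/h + 1 \ll 1$, with no further saving). Multiplying, dividing by the Euler factor $\phi(g)^3\phi(xyz)^2$, and summing produces a contribution of order $h^2$ up to logarithms -- a full power of $h$ too large. The reason the paper does not lose this factor is precisely the Farey-spacing argument: after the change of variables, the counts $C_{\ell,n_\ell}$ of $(b,y)$ (resp.\ $(c,z)$) in each dyadic box are $O(1)$, which is a far stronger statement than what the crude $(q_i/\tilde q_i + 1)$ counting provides. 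Without some analogue of this input, your sketch cannot reach $h(\log h)^2(\log\log h)^2$.

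You also do not explain how the restriction $\|a_2/q_2\|, \|a_3/q_3\| \ge 1/h$ is actually exploited beyond noting that $|n_2|, |n_3|$ are bounded away from zero. In the paper this restriction enters in a very specific way: it guarantees $n_\ell, n_m \ge 1$ in the dyadic bucketing, which is exactly what lets Lemma~\ref{lem:triple_sum_F} apply and keeps the dyadic sum summable. Without the change of variables the restriction merely removes a few of the smallest $n_i$ from the sum, which is nowhere near enough to recover two logarithms or a power of $h$.
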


\begin{proof}
We begin by reparametrizing the sum in \eqref{eq:T2def} over $a_1, a_2, a_3$. For fixed $g,x,y,z$ and fixed $a_1, a_2, a_3$ satisfying the constraints of the sums in \eqref{eq:T2def}, we will fix parameters $a,b,c$ as follows. By the Chinese Remainder theorem, and since $g, x,$ and $y$ are pairwise relatively prime, there exist unique values $1 \le a \le x$ and $1 \le b \le gy$ such that $\frac{a_3}{gxy} \equiv \frac ax - \frac b{gy} \mod 1$. Similarly, there exist unique values $1 \le a'\le x$ and $1 \le c \le gz$ such that $\frac{a_2}{gxz} \equiv \frac c{gz} - \frac{a'}{x} \mod 1$. Since $\frac{a_1}{gyz} + \frac{a_2}{gxz} + \frac{a_3}{gxy} \in \mathbb Z$, we have
\begin{equation*}
gyz\left(\frac{a_2}{gxz} + \frac{a_3}{gxy}\right) \in \mathbb Z 
\quad \thus \quad gyz \left(\frac ax - \frac b{gy} + \frac c{gz} - \frac{a'}{x}\right) \in \mathbb Z 
\quad \thus \quad gyz \frac{(a-a')}{x}\in \mathbb Z.
\end{equation*}
Since $(gyz,x) = 1$, this implies that $x|(a-a')$, and thus $a = a'$. Finally, the fact that $\frac{a_1}{gyz} + \frac{a_2}{gxz} + \frac{a_3}{gxy}\in \mathbb Z$ implies that $\frac{a_1}{gyz} \equiv -\frac{a_2}{gxz}-\frac{a_3}{gxy} \equiv \frac{b}{gy}-\frac{c}{gz} \mod 1$, so that the triple $a_1,a_2,a_3$ uniquely determines (and is uniquely determined by) a triple $a,b,c$ with $1 \le a \le x$, $1 \le b \le gy$, and $1 \le c \le gz$ such that
\begin{equation*}
\frac{a_1}{gyz} \equiv \frac{b}{gy}-\frac{c}{gz} \mod 1, \frac{a_2}{gxz} \equiv \frac{c}{gz}-\frac{a}x \mod 1, \text{ and }\frac{a_3}{gxy} \equiv \frac ax - \frac b{gy} \mod 1.
\end{equation*}

Upon moving the sums over $y$ and $z$ in \eqref{eq:T2def} inside, we get
\begin{equation*}
T_2 = \sum_{\substack{g,x|q \\ x < h/g}} \frac{\mu(gx)^2}{\phi(g)^3\phi(x)^2} \sum_{\substack{a\\ (a,x) = 1}} S_2(g,x,a),
\end{equation*}
where $S_2(g,x,a)$ denotes the sum
\begin{equation}\label{eq:pf-of-t2-def-of-s2}
S_2(g,x,a) = \sum_{\substack{y,z|q \\ y,z < h/g}} \frac{\mu(gxyz)^2}{\phi(yz)^2} \sum_{\substack{b,c \\ (b,gy) = (c,gz) = 1 \\ \\ \left\|\frac{c}{gz}-\frac ax\right\| \ge \frac 1h \\ \left\|\frac a{x} - \frac{b}{gy}\right\| \ge \frac 1h }} F\left(\frac ax - \frac b{gy}\right) F\left(\frac b{gy} - \frac c{gz}\right) F\left(\frac c{gz} - \frac ax\right).
\end{equation}

Since $gy < h$ and $gz < h$, the product $yz$ is less than $h^2$, so that
\begin{equation*}
\frac{yz}{\phi(yz)} \ll \log \log(h^2) \ll \log \log h.
\end{equation*}
Thus we can replace the expression $\tfrac 1{\phi(yz)^2}$ in \eqref{eq:pf-of-t2-def-of-s2} with $\tfrac{(\log \log h)^2}{y^2z^2}$.

Let $\ell$ and $m$ be such that $2^\ell < y \le 2^{\ell + 1}$ and $2^m < z \le 2^{m+1}$, and further define $n_{\ell}$ and $n_m$ to be variables ranging from $1$ to $g2^{\ell}$ and $1$ to $g2^{m}$ respectively. 

If $\frac{n_\ell}{g2^{\ell + 1}} \le \left\|\frac ax - \frac b{gy}\right\| \le \frac{n_\ell + 1}{g2^{\ell +1}}$, then $F\left(\frac ax - \frac b{gy}\right) \ll F\left(\frac{n_\ell}{g2^{\ell+1}}\right)$; crucially, this upper bound depends only on $\ell$ and $n_\ell$, and does not depend on $b$ or $y$. Similarly, if $\frac{n_m}{g2^{m+1}} \le \left\| \frac c{gz}-\frac ax\right\| \le \frac{n_m + 1}{g2^{m+1}}$, then $F\left(\frac c{gz} - \frac ax \right) \ll F \left(\frac{n_m}{g2^{m+1}}\right)$. Because of the assumption that $\left\|\frac ax - \frac b{gy}\right\| \ge \frac 1h$, the constraint $\frac{n_\ell}{g2^{\ell + 1}} \le \left\|\frac ax - \frac b{gy}\right\| \le \frac{n_\ell + 1}{g2^{\ell + 1}}$ is satisfied for some $n_\ell$ with $1 \le n_\ell \le g2^{\ell}$; in particular, the case that $n_\ell = 0$ is ruled out. Similarly, the case that $n_m = 0$ is ruled out by our assumptions on $\frac c{gz} -\frac ax$.

Thus
\begin{align*}
S_2(g,x,a) \ll &(\log \log h)^2 \sum_{\ell,m = 1}^{\log_2\tfrac hg} \sum_{n_\ell = 1}^{g2^{\ell + 1}-1} \sum_{n_m = 1}^{g2^{m + 1}-1} \frac{1}{2^{2\ell + 2m}} \\
&\times F\left(\frac{n_\ell}{g2^{\ell + 1}}\right)F\left(\frac{n_m}{g2^{m+1}}\right) F\left(\frac{n_\ell 2^m - n_m2^{\ell}}{g2^{\ell +m+ 1}}\right)\!\!\!\!\!\!\!\!\!\!\!\!\!\!\!\!\!\!\!\sum_{\substack{2^\ell < y \le 2^{\ell + 1} \\ 2^m < z \le 2^{m+1} \\ n_\ell \le g2^{\ell + 1}\|a/x - b/(gy)\| \le n_\ell + 1 \\ n_m \le g2^{m+1}\|c/(gz) - a/x\| \le n_m + 1}}\!\!\!\!\!\!\!\!\!\!\!\!\!\!\!\!\!\!\!\!\! 1.
\end{align*}
Define
\begin{align*}
C_{\ell,n_\ell} &= \#\left\{b,y: \tfrac by \in \left(\tfrac{ga}{x} - \tfrac{n_\ell+1}{y}, \tfrac{ga}{x} - \tfrac{n_\ell}{y}\right) \cup \left(\tfrac{ga}{x} + \tfrac{n_\ell}y, \tfrac{ga}{x} + \tfrac{n_\ell + 1}{y}\right), 1 \le b < g2^{\ell + 1}, 2^\ell < y \le 2^{\ell + 1}\right\}, \\
\end{align*}
and define $C_{m,n_m}$ in the same way, so that the inside sum of $S_2(g,x,a)$ is $C_{\ell,n_\ell}C_{m,n_m}$. The minimum spacing of two distinct points $\frac{b_1}{y_1}$ and $\frac{b_2}{y_2}$ with denominators $y_i \le 2^{\ell + 1}$ is $O(2^{-2\ell})$, so
\[C_{\ell,n_\ell} \ll \frac{2^{2\ell}}{2^\ell} \ll 2^{\ell},\]
and similarly $C_{m,n_m} \ll 2^m$. This implies that
\begin{align*}
S_2(g,x,a) \ll &(\log \log h)^2 \sum_{\ell,m = 1}^{\log_2\tfrac hg} \frac{2^{\ell+m}}{2^{2\ell + 2m}} \sum_{n_\ell = 1}^{g2^{\ell + 1}} \sum_{n_m = 1}^{g2^{m + 1}} F\left(\frac{n_\ell}{g2^{\ell + 1}}\right)F\left(\frac{n_m}{g2^{m+1}}\right) F\left(\frac{n_\ell 2^m - n_m 2^{\ell}}{g2^{\ell +m+ 1}}\right).
\end{align*}

By the symmetry of $\ell$ and $m$, we can restrict the sum to the terms where $\ell \le m$. Applying Lemma \ref{lem:triple_sum_F} to the sums over $n_\ell,n_m$ with $d_1 = g2^\ell$ and $d_2 = g2^m$ gives
\begin{align*}
S_2(g,x,a) \ll &(\log \log h)^2 \sum_{\substack{\ell,m = 1 \\ \ell \le m}}^{\log_2\tfrac hg} \frac 1{2^{\ell + m}} \left(hg^22^{2\ell} + g^32^{2\ell+m}m\right) \\
\ll &h(\log \log h)^2 g^2 \sum_{\substack{\ell,m = 1 \\ \ell \le m}}^{\log_2\tfrac hg}  \frac 1{2^{m-\ell}} + (\log \log h)^2 g^3 \sum_{\substack{\ell,m = 1 \\ \ell \le m}}^{\log_2\tfrac hg} m2^\ell \\
\ll &h(\log \log h)^2 g^2 \left(\log \frac hg\right)^2,
\end{align*}
and thus
\begin{align*}
T_2 &\ll  h(\log \log h)^2 \sum_{\substack{g,x|q \\ x < h/g}} \frac{\mu(gx)^2}{\phi(g)^3\phi(x)^2} \sum_{\substack{a\\ (a,x) = 1}} g^2\left(\log \frac hg\right)^2\\
&\ll h(\log h)^2(\log \log h)^2 \sum_{\substack{g,x|q \\ x < h/g}} \frac{\mu(gx)^2g^2}{\phi(g)^3\phi(x)} \\
&\ll h(\log h)^2(\log \log h)^2 \prod_{p \le h^4} \left( 1 + \frac{p^2}{(p-1)^3} + \frac{1}{p-1}\right)\text{, since $q = \prod_{p \le h^4}p$} \\
&\ll h(\log h)^4 (\log \log h)^2.
\end{align*}
\end{proof}

\subsection{Bounding $T_3$: terms with $gx,gy, gz$ small and each $a_i$ small}

All that remains is to analyze the sum $T_3$, which consists of the terms in \eqref{eq:V3-bound-gxyz-and-Fs} where $gx,gy,$ and $gz < h$, and for each $i$, $\left\|\frac{a_i}{q_i}\right\| \le \frac 2h$. Precisely, we define
\begin{equation}\label{eq:T3def}
T_3 := \sum_{\substack{g,x,y,z|q \\ x,y,z < h/g}} \frac{\mu(gxyz)^2}{\phi(g)^3\phi(xyz)^2} \sum_{\substack{a_1,a_2,a_3 \\ (a_1,gyz) = \cdots = 1 \\ a_1/gyz + \cdots \in \Z \\ \|a_1/gyz\| < 2/h \\ \|a_2/gxz\| < 2/h \\ \|a_3/gxy\| < 2/h}} F\left(\frac{a_1}{gyz}\right)F\left(\frac{a_2}{gxz}\right)F\left(\frac{a_3}{gxy}\right).
\end{equation}
Intuitively, there are simply not many triples of fractions $\tfrac{a_i}{q_i}$ where the denominators are not too big, each fraction is close to an integer, and the sum of all three is in $\mathbb Z$. We will make this precise in the following lemma bounding $T_3$, where the key savings come from bounding the number of satisfactory triples.

\begin{lemma}\label{lem:T3}
Let $h \ge 4$, let $q$ be the product of all primes $p \le h^4$ and define $T_3$ by \eqref{eq:T3def}. Then 
\[T_3 \ll h (\log h)^4 (\log \log h)^2.\]
\end{lemma}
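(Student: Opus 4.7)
The plan is to follow the framework of the proof of Lemma~\ref{lem:T2} (the $T_2$ bound) and adapt it to the more stringent three-sided constraint of $T_3$. First I would parametrize $(a_1, a_2, a_3)$ via $\alpha = a/x$, $\beta = b/(gy)$, $\gamma = c/(gz)$ so that $a_3/q_3 = \alpha - \beta =: \delta_1$, $a_2/q_2 = \gamma - \alpha =: \delta_2$, and $a_1/q_1 = -(\delta_1 + \delta_2)$. Factoring $T_3 = \sum_{g,x|q,\,x<h/g} \frac{\mu(gx)^2}{\phi(g)^3 \phi(x)^2} \sum_{a:(a,x)=1} S_3(g, x, a)$, the inner sum $S_3(g, x, a)$ runs over $(y, z, b, c)$ subject to the three constraints $\|\delta_1\|, \|\delta_2\|, \|\delta_1 + \delta_2\| < 2/h$ (compared to the two analogous constraints used for $T_2$). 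I would then replace $\phi(yz)^2$ by $y^2 z^2$, paying a factor $(yz/\phi(yz))^2 \ll (\log \log h)^2$, and dyadically decompose $y \in [2^\ell, 2^{\ell+1})$, $z \in [2^m, 2^{m+1})$, sub-decomposing further by integer indices $n_\ell, n_m$ parameterizing $\|\delta_1\|, \|\delta_2\|$, exactly as in the proof of Lemma~\ref{lem:T2}.

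For each dyadic block with $n_\ell, n_m \ge 1$, I would apply Lemma~\ref{lem:triple_sum_F} to the sum $\sum_{n_\ell, n_m} F(\delta_1) F(\delta_2) F(\delta_1 + \delta_2)$; the first case of the lemma (with bound $h d_1^2 = h g^2 2^{2\ell+2}$ for $\ell \le m$) applies because the constraint $\|\delta_1 + \delta_2\| < 2/h$ in $T_3$ automatically supplies the required minimum-gap condition. Combined with the Farey-spacing estimates on the number of valid $(b, y)$ and $(c, z)$ pairs, this portion of the dyadic sum contributes $O(h g^2 (\log \log h)^2)$ per $(g, x, a)$, and via the Mertens-type estimates $\sum_{g | q} \mu(g)^2 g^2/\phi(g)^3 \asymp \log h$ and $\sum_{x | q,\, x < h/g} \mu(x)^2/\phi(x) \asymp \log h$ it produces a contribution to $T_3$ of order $h (\log h)^2 (\log \log h)^2$, well within the target bound.

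The new difficulty, absent in the $T_2$ analysis, is the boundary regime $n_\ell = 0$ or $n_m = 0$. Because the $T_2$ constraints forced $\|\delta_1\|, \|\delta_2\| \ge 1/h$, those indices could never vanish; in $T_3$ they can, and then $F(\delta_1)$ or $F(\delta_2)$ saturates near $h$ rather than being governed by the corresponding index. To handle, say, the $n_\ell = 0$ regime, I would count $(b, y)$ pairs with $\|\delta_1\| < 1/(g 2^{\ell+1})$ using Farey spacing (giving $O(g 2^\ell)$ such pairs per dyadic slice), and then control the remaining sum over $(c, z, n_m)$ by exploiting the triangle inequality together with $\|\delta_1 + \delta_2\| < 2/h$ to restrict the allowable configurations. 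Each of the boundary regimes $n_\ell = 0$ and $n_m = 0$ introduces an additional $\log h$ factor beyond the $T_2$-style bound, producing the final estimate $T_3 \ll h (\log h)^4 (\log \log h)^2$. The main obstacle is making this boundary analysis fully rigorous: Lemma~\ref{lem:triple_sum_F} does not apply directly when either index vanishes, and one must instead control the sum by direct Farey counting, carefully leveraging the coupling between $\delta_1$ and $\delta_2$ imposed by the third constraint $\|\delta_1 + \delta_2\| < 2/h$.
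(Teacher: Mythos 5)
Your proposal takes a genuinely different route from the paper's, and it has a gap that you partially acknowledge but do not close. You try to port the $T_2$ machinery (dyadic decomposition in $n_\ell, n_m$ followed by Lemma~\ref{lem:triple_sum_F}) to $T_3$, and you correctly observe that this breaks down because $T_3$ allows $n_\ell = 0$ or $n_m = 0$ — indeed, since in $T_3$ all three $\|a_i/q_i\| < 2/h$ while $g2^{\ell+1} < 2h$, the index $n_\ell$ is necessarily $O(1)$, so the ``boundary'' regime you set aside is in fact essentially the \emph{entire} sum, and your application of Lemma~\ref{lem:triple_sum_F} to the ``interior'' contributes nothing of substance. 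Your stated plan for the boundary — Farey-count $(b,y)$ pairs and couple $\delta_1$ with $\delta_2$ via the third constraint — is the right instinct but is missing the one observation that makes it work: since $\|a_3/(gxy)\| < 2/h$ forces $gxy > h/2$ (and similarly for the other two constraints), \emph{at most one of $x,y,z$ can be below $\sqrt{h/(2g)}$}. Without that, a Farey count in an interval of length $\asymp 2/h$ with denominators $\le 2^{\ell+1}$ gives $\ll 2^{2\ell}g/h + 1$, and the ``$+1$'' dominates precisely when $2^{2\ell} < h/g$, i.e.\ for small $y$, wrecking the bound; it is only after relabeling so that $y, z \ge \sqrt{h/(2g)}$ — hence $2^{2\ell}g/h \ge 1$ — that the ``$+1$'' is harmless. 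You never invoke this and your stated count of ``$O(g2^\ell)$ such pairs per dyadic slice'' is not the operative one.

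For comparison, the paper's actual argument does not use Lemma~\ref{lem:triple_sum_F} at all for $T_3$. It first applies the observation above to assume $y,z \ge \sqrt{h/(2g)}$ after relabeling, then uses the trivial bound $F(\alpha) \le h$ on \emph{all three} factors (which is nearly sharp here, since every $\|a_i/q_i\|$ is $< 2/h$), pulling out $h^3$, and then reduces the problem to counting $(b,y)$ and $(c,z)$ with $b/(gy), c/(gz)$ in an interval of length $2/h$ about $a/x$. The Farey-spacing bound $C_\ell \ll 2^{2\ell}g/h$ (no ``$+1$'' needed, by the relabeling) plus the dyadic sum over $\ell, m \ge \frac12\log_2(h/g)$ yields $S_3(g,x,a) \ll (g^2/h^2)(\log(h/g))^2(\log\log h)^2$, and the sum over $g,x,a$ finishes. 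This is simpler and more direct than what you propose. So: right target, and you correctly located the difficulty, but the observation that $y,z$ can be taken $\ge \sqrt{h/(2g)}$ is the missing idea, and without it your boundary analysis would not close.
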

\begin{proof}
Since $\left\|\frac{a_3}{gxy} \right\| < \frac 2h$, we must have $\tfrac 1{gxy} < \tfrac 2h,$
so if $y < \sqrt{\tfrac{h}{2g}}$, then $x > \sqrt{\tfrac{h}{2g}}$. By the same logic with $a_1$ and $a_2$, at most one of $x,y,z$ can be $< \sqrt{\tfrac h{2g}}$. By relabeling if necessary, we get that
\begin{equation*}
T_3 \ll \sum_{\substack{g,x,y,z|q \\ x,y,z < h/g \\ y,z \ge \sqrt{h/(2g)}}} \frac{\mu(gxyz)^2}{\phi(g)^3\phi(xyz)^2} \sum_{\substack{a_1,a_2,a_3 \\ (a_1,gyz) = \cdots = 1 \\ a_1/gyz + \cdots \in \Z \\ \|a_1/gyz\| < 2/h \\ \|a_2/gxz\| < 2/h \\ \|a_3/gxy\| < 2/h}} F\left(\frac{a_1}{gyz}\right)F\left(\frac{a_2}{gxz}\right)F\left(\frac{a_3}{gxy}\right).
\end{equation*}

As in the proof of Lemma \ref{lem:T2}, there are unique values $a,b,c$ with
\[\frac{a_1}{gyz} \equiv \frac{b}{gy} - \frac{c}{gz} \mod 1, \frac{a_2}{gxz} \equiv \frac{c}{gz} - \frac ax \mod 1, \text{ and } \frac{a_3}{gxy} \equiv \frac ax - \frac b{gy} \mod 1,\]
and we can reparametrize $T_3$ in terms of sums over $a,b,c$ instead of $a_1,a_2,a_3$. Doing so, and moving the sums over $b,y,c,$ and $z$ inside, we get that
\begin{equation*}
T_3 \ll h^3 \sum_{\substack{g,x|q \\ gx \le h}} \frac{\mu(gx)^2}{\phi(g)^3\phi(x)^2} \sum_{\substack{a \le x \\ (a,x) = 1}} S_3(g,x,a),
\end{equation*}
where
\begin{equation*}
S_3(g,x,a) := \sum_{\substack{\sqrt{h/(2g)} \le y \le h/(2g) \\ \sqrt{h/(2g)} \le z \le h/(2g)}} \frac {\mu(yz)^2}{\phi(yz)^2}\#\left\{b,c:\frac b{gy},\frac{c}{gz} \in \left(\frac ax - \frac 2h, \frac ax + \frac 2h\right)\right\}.
\end{equation*}

Since $y,z \le h$, the product $yz$ is $\le h^2$, and thus $\frac 1{\phi(yz)^2} \ll \frac{(\log \log h)^2}{y^2z^2}$, when this term appears in $S_3(g,x,a)$. In order to bound $S_3(g,x,a)$, we split the sums over $y$ and $z$ dyadically, defining $\ell$ such that $2^\ell < y \le 2^{\ell + 1}$ and $2^m < z \le 2^{m+1}$.

Then
\begin{align*}
S_3(g,x,a) &\ll (\log \log h)^2 \sum_{\ell,m = \tfrac 12(\log_2 (h/g))}^{\log_2(h/g)} \frac{C_\ell C_m}{2^{2\ell}2^{2m}},
\end{align*}
where
\[C_\ell := \#\left\{ b,y : \frac by \in \left(\frac{ga}{x} - \frac{2g}h, \frac{ga}{x} + \frac{2g}h\right), 1 \le b < y, y \le 2^{\ell + 1}\right\},\]
and $C_m$ is defined identically, with $m$ in place of $\ell$. The minimum spacing of two distinct points $\frac{b_1}{y_1}$ and $\frac{b_2}{y_2}$ with denominators at most $2^{\ell + 1}$ is $O\left(\tfrac 1{2^{2\ell}}\right)$, so $C_\ell \ll 2^{2\ell}\frac gh  + 1.$
Since $\ell \ge \tfrac 12(\log_2(h/g))$, $2^{2\ell}\frac gh \ge 1$, so in particular
$C_\ell \ll 2^{2\ell} \frac gh,$
and similarly $C_m \ll 2^{2m} \frac gh$.

Plugging this in gives
\begin{align*}
S_3(g,x,a) &\ll (\log \log h)^2 \sum_{\ell,m = \tfrac 12(\log_2 (h/g))}^{\log_2(h/g)} \frac{2^{2\ell}2^{2m}}{2^{2\ell}2^{2m}}\frac{g^2}{h^2} \ll \frac{g^2}{h^2} (\log (h/g))^2 (\log \log h)^2,
\end{align*}
so that
\begin{align*}
T_3 &\ll h (\log \log h)^2 \sum_{\substack{g,x|q \\ gx \le h}} \frac{\mu(gx)^2 g^2}{\phi(g)^3\phi(x)^2} \sum_{\substack{a \le x \\ (a,x) = 1}} (\log (h/g))^2 \\
&\ll h(\log h)^2(\log \log h)^2 \sum_{\substack{g,x|q \\ gx \le h}} \frac{\mu(gx)^2 g^2}{\phi(g)^3\phi(x)} \\
&\ll h(\log h)^2(\log \log h)^2 \prod_{p \le h^4} \left(1 + \frac{p^2}{(p-1)^3} + \frac{1}{p-1}\right),
\end{align*}
recalling that $q = \prod_{p \le h^4} p$. Thus $T_3 \ll h(\log h)^4(\log \log h)^2$. 
\end{proof}

Putting Lemmas \ref{lem:T1}, \ref{lem:T2}, and \ref{lem:T3} together completes the proof of Theorem \ref{threetermintegerreducedresidues}.

\section{Function Field Analogues: Proof of Theorem \ref{thm:mvfuncfield}} \label{sec:funcfieldmv}

We now turn to considering analogous questions when working in $\F_q[t]$ rather than in $\Z$. To begin with, let's set up the situation in the function field case. Fix a finite field $\F_q$. Rather than primes in $\N$, consider monic irreducible polynomials in $\F_q[t]$. 

The \emph{norm} of a polynomial $F \in \F_q[t]$ is given by $|F| = q^{\deg F}$. We consider intervals in norm, where the interval $I(F,h)$ of degree $h$ is defined as
\[I(F,h) := \{G \in \F_q[t] : |F-G| < q^{h}\}.\]

For a fixed monic polynomial $Q$, we denote
\begin{align*}
\mathcal C(Q) &:= \left\{ \frac AQ \in \F_q[t]: |A| < |Q|\right\}, \\
\mathcal R(Q) &:= \left\{\frac AQ \in \F_q[t] : |A| < |Q|, (A,Q) = 1 \right\}.
\end{align*}
For $Q = 1$, we instead for convenience define $\mathcal C(Q) = \{1\} = \mathcal R(Q)$. If $\deg Q > 0$, the set of polynomials $F$ with $\deg F < \deg Q$ is a canonical set of representatives of $\F_q[t]/(Q)$; in what follows, we will identify $\{F \in \F_q[t] : \deg F < \deg Q\}$ with $\F_q[t]/(Q)$. If $Q = 1$, we will take $1$ to represent the unique equivalence class of $\F_q[t]/(Q)$. 

We consider the $k$th moment of the distribution of irreducible polynomials in intervals $I(F,h)$. As in the integer case, we begin by considering the related quantity of the distribution of reduced residues modulo a squarefree monic polynomial $Q$. That is, for $Q$ a fixed squarefree monic polynomial, we consider
\begin{equation}\label{eq:defn-of-mk-in-fcn-field}
m_k(Q;h) = \sum_{F \in \mathcal C(Q)} \Big(\Big( \sum_{\substack{G \in I(F,h) \\ (G,Q) = 1}}1 \Big) - \frac{q^h \phi(Q)}{|Q|}\Big)^k.
\end{equation}
Here we are taking the centered moment $m_k(Q;h)$ by subtracting $\frac{q^h \phi(Q)}{|Q|}$, which is the mean value of $\sum_{\substack{G \in I(F,h) \\ (G,Q) = 1}} 1$.

As in the integer case, we can express the moment $m_k(Q;h)$ in terms of exponential sums. For $\a = \frac{F}{G} \in \F_q(t)$ a rational function, let $\mathrm{res}(\a)$ denote the coefficient of $\frac 1t$ when $\a$ is written as a Laurent series with finitely many positive terms. Then define
\[e(\a) := e_q(\mathrm{res}(\a)) = \mathrm{exp}(2\pi i \cdot \mathrm{tr}(\mathrm{res}(\a))/p),\]
where $q$ is a power of the prime $p$ and $\mathrm{tr}:\F_q \to \F_p$ is the trace function. This exponential function, like its integer analog, satisfies the crucial property that for a monic polynomial $F \in \F_q[t],$
\[\sum_{\a \in \mathcal C(F)} e\left(\a\right) = \begin{cases}1 &\text{ if } F = 1 \\ 0 &\text{ otherwise}.\end{cases}\]

We then have the following lemma, analogous to \cite[Lemma 2]{MontgomeryVaughanReducedResidues}.

\begin{lemma}\label{lem:mvl2}
Let $Q \in \F_q[t]$ be squarefree and let $h \in \N_{\ge 1}$. Define $m_k(Q;h)$ by \eqref{eq:defn-of-mk-in-fcn-field}. Then
\[m_k(Q;h) = |Q| \left(\frac{\phi(Q)}{|Q|}\right)^k V_k(Q;h),\]
where 
\[V_k(Q;h) := \sum_{\substack{R_1, \dots R_k |Q \\ |R_i| > 1 \\ R_i \text{ monic}}} \prod_{i=1}^k \frac{\mu(R_i)}{\phi(R_i)} \sum_{\substack{\rho_1, \dots, \rho_k \\ \rho_i \in \mathcal R(R_i) \\ \sum_i \rho_i/R_i = 0}} E\left(\frac{\rho_1}{R_1}\right) \cdots E\left(\frac{\rho_k}{R_k}\right),\]
and where, for $\a \in \F_q(t)$ a rational function,
\[E(\a) := \sum_{M \in I(0,h)} e(M\a).\]
\end{lemma}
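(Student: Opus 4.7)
The plan is to mirror Lemma 2 of \cite{MontgomeryVaughanReducedResidues} in the function field language, starting from the inner count and expanding all the way out. First I would detect the coprimality condition $(G,Q)=1$ by M\"obius: write
\[\sum_{\substack{G \in I(F,h) \\ (G,Q)=1}}1 = \sum_{D \mid Q} \mu(D) \#\{G \in I(F,h) : D \mid G\},\]
and then detect $D \mid G$ using additive characters via the orthogonality
\[\mathbf{1}_{D \mid G} = \frac{1}{|D|} \sum_{\rho \in \mathcal C(D)} e(\rho G).\]
Writing $G = F + M$ with $M \in I(0,h)$, the $M$-sum produces $E(\rho)$, so after separating $\rho = 0$ (which contributes $q^{h}\phi(Q)/|Q|$, exactly the centering term) I obtain
\[\sum_{\substack{G \in I(F,h) \\ (G,Q)=1}}1 - \frac{q^h\phi(Q)}{|Q|} = \sum_{\substack{D \mid Q \\ |D|>1}} \frac{\mu(D)}{|D|} \sum_{\substack{\rho \in \mathcal C(D) \\ \rho \neq 0}} e(\rho F)\, E(\rho).\]

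Next I would rewrite each nonzero $\rho \in \mathcal C(D)$ as a fraction in lowest terms $\rho = A/R$ with $R \mid D$ monic, $|R|>1$, $A$ a numerator coprime to $R$. Collecting for fixed $R$ the inner M\"obius sum
\[\sum_{\substack{D \mid Q \\ R \mid D}} \frac{\mu(D)}{|D|} = \frac{\mu(R)}{|R|} \prod_{P \mid Q/R}\!\Bigl(1 - \tfrac{1}{|P|}\Bigr) = \frac{\mu(R)\,\phi(Q)}{|Q|\,\phi(R)},\]
using $\phi(Q/R)|R|/|Q| = \phi(Q)/(|Q|\phi(R)/|R|) \cdot(1/|R|)$ etc. This repackages the expression as
\[\sum_{\substack{G \in I(F,h) \\ (G,Q)=1}}1 - \frac{q^h\phi(Q)}{|Q|} = \frac{\phi(Q)}{|Q|} \sum_{\substack{R \mid Q \\ |R|>1}} \frac{\mu(R)}{\phi(R)} \sum_{A \in \mathcal R(R)} e(AF/R)\, E(A/R),\]
which is the function field analog of the identity Montgomery--Vaughan use.

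Now I would raise to the $k$-th power, expand into a multiple sum indexed by $R_1,\dots,R_k \mid Q$ (each $|R_i|>1$) and reduced residues $A_i$ modulo $R_i$, and push the sum over $F \in \mathcal C(Q)$ inside. The only $F$-dependence is the factor $e\!\bigl(F \sum_i A_i/R_i\bigr)$. The key orthogonality here is the function field analog of $\sum_{n \mod q} e(bn/q) = q \cdot \mathbf{1}_{q \mid b}$: for a rational function $\alpha$ whose denominator divides $Q$, the sum $\sum_{F \in \mathcal C(Q)} e(F\alpha)$ equals $|Q|$ when $\alpha \in \F_q[T]$ (equivalently $\sum_i A_i/R_i = 0$ in $\F_q(T)/\F_q[T]$) and vanishes otherwise. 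The proof of this orthogonality is the only real content: one writes $\alpha = B/Q_0$ in lowest terms with $Q_0 \mid Q$, splits $F = F_1 + F_2 Q_0$ with $F_1 \in \mathcal C(Q_0)$, $F_2 \in \mathcal C(Q/Q_0)$, notes that $e(F_2 \alpha \cdot Q_0) = 1$ since $F_2 B$ is a polynomial and hence has zero residue at infinity, and reduces to the classical fact $\sum_{F_1 \in \mathcal C(Q_0)} e(F_1 B/Q_0)=0$ when $\deg Q_0 \ge 1$ and $\gcd(B,Q_0)=1$.

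Applying this orthogonality collapses the $F$-sum to $|Q|$ precisely on the diagonal $\sum_i A_i/R_i \equiv 0$, yielding
\[m_k(Q;h) = |Q|\Bigl(\tfrac{\phi(Q)}{|Q|}\Bigr)^k \sum_{\substack{R_i \mid Q \\ |R_i|>1}} \prod_{i=1}^k \frac{\mu(R_i)}{\phi(R_i)} \sum_{\substack{A_i \in \mathcal R(R_i) \\ \sum_i A_i/R_i = 0}} \prod_{i=1}^k E(A_i/R_i),\]
which is exactly $|Q|(\phi(Q)/|Q|)^k V_k(Q;h)$. The main obstacle, modest though it is, is verifying the orthogonality relation on $\F_q[T]/(Q)$ in terms of the residue-pairing used to define $e(\cdot)$, since that is what distinguishes the function field manipulation from the integer one; once that is in hand the rest is bookkeeping.
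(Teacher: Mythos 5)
Your proof is correct and follows essentially the same route as the paper's: both detect $(G,Q)=1$ via M\"obius, expand divisibility into additive characters of $\F_q[T]/(Q)$, reparametrize by lowest-terms fractions to produce the $\mu(R)/\phi(R)$ weights, and finish by raising to the $k$-th power and applying orthogonality over $\F_q[T]/(Q)$ (which the paper states with small typos, writing $\Z$ for $\F_q[T]$). The only presentational difference is that you work directly with the count while the paper works with the indicator $\kappa$; the substance is identical.
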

The proof follows that of \cite[Lemma 2]{MontgomeryVaughanReducedResidues} very closely.
\begin{proof}
Let $\kappa(R) = 1$ when $(R,Q) = 1$, $\kappa(R) = 0$ otherwise. Then
\begin{align*}
\kappa(R) = \sum_{S|(R,Q)} \mu(S) &= \sum_{S|Q} \frac{\mu(S)}{|S|} \sum_{\sigma \in \mathcal C(S)} e(R\sigma) \\ 
&= \sum_{T|Q} \Big(\sum_{\substack{A \in \mathcal C(T) \\ (A,T) = 1}}e(RA)\Big)\Big(\sum_{T|S|Q} \frac{\mu(S)}{|S|}\Big).
\end{align*}
Here the second factor is $\frac{\phi(Q)}{|Q|} \frac{\mu(T)}{|T|}.$ The function $\kappa(R)$ has mean value $\frac{\phi(Q)}{|Q|}$, so we subtract $\frac{\phi(Q)}{|Q|}$ from both sides, which removes the term when $T = 1$. We then substitute $R=M + N$, and sum over $M$ to see that
\[\sum_{\substack{|M| < q^h \\ (M+N,Q) = 1}} 1 - h\frac{\phi(Q)}{|Q|} = \frac{\phi(Q)}{|Q|} \sum_{\substack{R|Q \\ |R|>1}} \frac{\mu(R)}{\phi(R)} \sum_{\substack{A \in \mathcal C(R) \\ (A,R) = 1}} E\left(\frac{A}{R}\right) e(NA/R).\]
The argument is completed upon raising both sides to the $k$th power, summing over $N$, multiplying out the right hand side, and appealing to the fact that
\[\sum_{|N|<q^d} e(N(\a_1 + \cdots + \a_k)) = \begin{cases} q^d &\text{ if }\sum \a_i \in \Z \\ 0 &\text{ else}.\end{cases}\]
\end{proof}

One important difference between the integer setting and the function field setting is the behavior of the sums $E(\a)$, which are particularly well-behaved in $\F_q[t]$. These sums have also been studied by Hayes in \cite[Theorem 3.5]{Hayes1966}.
\begin{lemma}\label{lem:exponential-sums-in-Fq}
Let $\a \in \F_q(t)$ be a rational function with $\deg \a \le -1$. Then
\[E(\a) = \begin{cases}q^h &\text{ if } \deg \a < -h \\
0 &\text{ if } \deg \a \ge h.\end{cases}\]
\end{lemma}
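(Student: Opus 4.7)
The plan is to interpret $V := I(0,h) = \{M \in \mathbb{F}_q[T] : \deg M < h\}$ as an $\mathbb{F}_q$-vector space of dimension $h$, and to exploit the fact that $M \mapsto e(M\alpha)$ is a group homomorphism from $(V,+)$ to $\mathbb{C}^\times$. By orthogonality the sum $E(\alpha)$ is either $q^h$ (when the character is trivial) or $0$ (otherwise), so the whole argument reduces to determining on which side of this dichotomy $\alpha$ lies.

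For the first case, I would expand $\alpha = \sum_{j \ge h+1} c_j T^{-j}$ (which is allowed since $\deg \alpha \le -h-1$) and show directly that for every $M = \sum_{i=0}^{h-1} m_i T^i \in V$, every monomial appearing in $M\alpha$ has degree at most $(h-1) + (-h-1) = -2$. Consequently the coefficient of $T^{-1}$ in $M\alpha$ vanishes, so $\mathrm{res}(M\alpha) = 0$ and $e(M\alpha) = 1$ for every $M \in V$; summing gives $E(\alpha) = q^h$.

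For the second case (which I read as $-h \le \deg \alpha \le -1$, since $\deg \alpha \ge h$ as written would be vacuous under the hypothesis $\deg \alpha \le -1$), I would first observe that the map $\phi : V \to \mathbb{F}_q$ defined by $\phi(M) = \mathrm{res}(M\alpha)$ is $\mathbb{F}_q$-linear. The image is therefore an $\mathbb{F}_q$-subspace of $\mathbb{F}_q$, so it equals either $\{0\}$ or $\mathbb{F}_q$. Writing $\deg \alpha = -d$ with $1 \le d \le h$ and $c_d \neq 0$ the leading Laurent coefficient, the element $M = T^{d-1}$ lies in $V$ and satisfies $\phi(T^{d-1}) = c_d \ne 0$, so the image is all of $\mathbb{F}_q$ and $\phi$ is surjective with kernel of size $q^{h-1}$. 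Since $\mathrm{tr}: \mathbb{F}_q \to \mathbb{F}_p$ is surjective, the character $c \mapsto e_q(c)$ is a nontrivial additive character of $\mathbb{F}_q$, giving $\sum_{c \in \mathbb{F}_q} e_q(c) = 0$. Grouping the sum by fibers of $\phi$ then yields
\[
E(\alpha) = \sum_{M \in V} e_q(\phi(M)) = q^{h-1} \sum_{c \in \mathbb{F}_q} e_q(c) = 0.
\]

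There is no real obstacle: the only point requiring mild care is checking that $\phi$ is truly $\mathbb{F}_q$-linear (not merely $\mathbb{F}_p$-linear), which follows because $\mathrm{res}$ picks out a single Laurent coefficient and scalar multiplication by $\mathbb{F}_q$ commutes with this, and then identifying an explicit preimage witnessing surjectivity. Everything else is orthogonality of characters on a finite abelian group.
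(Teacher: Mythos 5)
Your proof is correct and follows essentially the same route as the paper: both arguments show that for $\deg\alpha < -h$ every product $M\alpha$ with $\deg M < h$ has degree at most $-2$ (so all residues vanish), and both use the $\mathbb{F}_q$-linearity of $M \mapsto \mathrm{res}(M\alpha)$ together with the witness $M = T^{-\deg\alpha - 1}$ to conclude surjectivity and hence vanishing by balance when $-h \le \deg\alpha \le -1$. You also correctly diagnosed the typo in the statement (``$\deg\alpha \ge h$'' should read ``$\deg\alpha \ge -h$'') and supplied the one point the paper leaves implicit, namely that $e_q$ is a nontrivial additive character because $\mathrm{tr}$ is surjective.
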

\begin{proof}
Let $\mathcal P_h \ins \F_q[t]$ be the set of polynomials of degree less than $h$. Assume first that $\deg \a < -h$. Then for all $M \in \mathcal P_h$, $\deg M\a = \deg M + \deg \a \le h-1 - h-1 = -2$, so the Laurent series for $M\a$ has no $\frac 1t$ term, and thus $\mathrm{res}(M\a) = 0$. But then
\[E(\a) = \sum_{M \in \mathcal P_h} e(M\a) = \sum_{M \in \mathcal P_h} e_q(\mathrm{res}(M\a)) = \sum_{M \in \mathcal P_h} 1 = q^h.\]

Now assume that $\deg \a \ge -h$. Consider the map $\mathrm{res}_\a:\mathcal P_h \to \F_q$ which at a polynomial $M$ returns the residue of $M\a$. This map is linear over $\F_q$, so its image is either $0$ or all of $\F_q$. Let $M = t^{-\deg \a - 1}$. Since $-h \le \deg \a \le -1$, we have $0 \le -\deg \a - 1 \le h-1$, so $M$ indeed is a polynomial in $\mathcal P_h$. On the other hand, $\mathrm{res}(M\a)$ is precisely the leading coefficient of $\a$, which must be nonzero. Thus the image of $\mathrm{res}_\a$ is nonzero, so it is all of $\F_q$. In particular, $\mathrm{res}_\a(M)$ takes each value in $\F_q$ equally often. Thus
\[E(\a) = \sum_{M \in \mathcal P_h} e_q(\mathrm{res}(M\a))\]
is a balanced exponential sum, which has sum $0$.
\end{proof}

This fact and other properties of the sums $E(\a)$ mean that the analysis of Montgomery and Vaughan in \cite{MontgomeryVaughanReducedResidues} in the function field setting is more streamlined. In fact, their work automatically gives the analog of our desired bound for the third moment in the function field case.

\subsection{The analog of \cite{MontgomeryVaughanReducedResidues} in the function field setting}

We begin with the following fundamental lemma, with an identical proof to the integer case.

\begin{lemma}[Fundamental Lemma]
Let $R_1, \dots, R_k\in \F_q[t]$ be squarefree monic polynomials with $R = [R_1, \dots, R_k]$. Suppose for all irreducible $P|R$, $P$ divides at least two $R_i$'s. Let $G_i$ be positive real-valued function defined on $\mathcal C(R_i)$. Then
\[\left|\sum_{\substack{A_i \in \mathcal C(R_i) \\  \sum_i A_i/R_i = 0}} G_1\left(\frac{A_1}{R_1}\right) \cdots G_k\left(\frac{A_k}{R_k}\right)\right| \le \frac 1{|R|} \prod_{i=1}^k \left(|R_i| \sum_{\substack{A_i \in \mathcal C(R_i)}} \left|G_i\left(\frac{A_i}{R_i}\right)\right|^2 \right)^{1/2}.\]
\end{lemma}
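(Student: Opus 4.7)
The plan is to mirror the Montgomery--Vaughan proof of the integer-case fundamental lemma, transplanted to $\F_q[T]$ using the additive characters $e(\cdot)$ on $\F_q(T)/\F_q[T]$. The two main ingredients will be additive-character orthogonality on $\F_q[T]/R$ and iterated Cauchy--Schwarz, structured by the hypothesis that each irreducible $P \mid R$ divides at least two of the $R_i$'s.

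First I would detect the constraint $\sum_i A_i/R_i = 0$ by orthogonality,
\[
\mathbf{1}\!\Big[\textstyle\sum_i A_i/R_i = 0\Big] \;=\; \frac{1}{|R|}\sum_{B \bmod R} e\!\Big(B \textstyle\sum_i A_i/R_i\Big),
\]
and interchange sums to express the left-hand side of the lemma as $|R|^{-1}\sum_{B \bmod R}\prod_i \widehat{G_i}(B)$, where $\widehat{G_i}(B) := \sum_{A_i \bmod R_i} G_i(A_i/R_i)\,e(BA_i/R_i)$ depends only on $B$ modulo $R_i$. Taking absolute values inside and distributing via Cauchy--Schwarz, I would try to bound $\sum_B \prod_i |\widehat{G_i}(B)|$ by a product of quantities of the form $\bigl(\sum_{B \bmod R_i}|\widehat{G_i}(B)|^2\bigr)^{1/2}$.

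The hypothesis enters in the following way. Writing $|\widehat{G_i}(B)|^{2} = \widehat{H_i}(B)$ for the autocorrelation $H_i = G_i \ast \check{G}_i$ on $\F_q[T]/R_i$ and applying orthogonality in reverse, one sees that $\sum_{B \bmod R}\prod_i \widehat{H_i}(B) = |R| \sum_{\sum A_i/R_i = 0}\prod_i H_i(A_i/R_i)$. The pair-hypothesis ensures $[R_j : j \neq i] = R$ for every $i$, so this "dual" sum really runs over all of $\F_q[T]/R$ with no lcm collapse; this is what lets the argument close without losing a factor of $|R|$. (Without the hypothesis the bound fails, e.g.\ for $R_1 = P$, $R_2 = Q$ with $P, Q$ distinct irreducibles.) The constrained sum of $H_i$'s can then be bounded using $H_i(A) \le H_i(0) = \sum|G_i|^{2}$ together with the count $\prod |R_i|/|R|$ of constrained tuples.

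Finally I would invoke Plancherel on $\F_q[T]/R_i$, namely $\sum_{B \bmod R_i}|\widehat{G_i}(B)|^{2} = |R_i|\sum_{A_i}|G_i(A_i/R_i)|^{2}$, and track the multiplicities $|R|/|R_i|$ arising from summing over $B \bmod R$ rather than $B \bmod R_i$; collecting these factors against the $1/|R|$ prefactor delivers the desired bound $\frac{1}{|R|}\prod_i \sqrt{|R_i|\sum|G_i|^{2}}$. The main obstacle is the bookkeeping in the middle step: a naive iterated Cauchy--Schwarz introduces higher Fourier moments $\sum_B |\widehat{G_i}|^{2^\ell}$ that are hard to control, and the saving comes from the autocorrelation-reversal trick combined with the pair-hypothesis, which exactly compensates for the factors of $|R|$ one would otherwise lose. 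This delicate step in Montgomery--Vaughan's integer proof transfers to the function-field setting without essential modification, and is arguably cleaner here thanks to the sharp cut-off behavior of the exponential sums $E(\alpha)$ in $\F_q[T]$.
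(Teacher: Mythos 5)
Your Fourier-side setup is fine: detecting $\sum_i A_i/R_i = 0$ by orthogonality correctly rewrites the left-hand side as $|R|^{-1}\sum_{B\bmod R}\prod_i\widehat{G_i}(B)$, and what must be shown is
\[
\Bigl|\sum_{B\bmod R}\prod_{i=1}^k\widehat{G_i}(B)\Bigr|\;\le\;\prod_{i=1}^k\bigl(|R_i|\,\|G_i\|_2^2\bigr)^{1/2}.
\]
The gap is in the middle. The autocorrelation-reversal identity you invoke shows
\[
\sum_{B\bmod R}\prod_i\widehat{H_i}(B)\;=\;\sum_{B\bmod R}\prod_i|\widehat{G_i}(B)|^2\;\le\;\prod_i|R_i|\,\|G_i\|_2^2,
\]
using $|H_i(A)|\le H_i(0)=\|G_i\|_2^2$ and the tuple count $\prod_i|R_i|/|R|$ (for which $[R_1,\dots,R_k]=R$ suffices). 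But this bounds the $\ell^2$-type quantity $\sum_B\prod_i|\widehat{G_i}|^2$, which is \emph{not} what controls the target $\bigl|\sum_B\prod_i\widehat{G_i}\bigr|\le\sum_B\prod_i|\widehat{G_i}|$. Passing from the $\ell^1$ quantity to the $\ell^2$ one by Cauchy--Schwarz costs a factor $|R|^{1/2}$, which is not recovered. Concretely, Cauchy--Schwarz against $\widehat{G_1}$ plus your trick on $i\ge 2$ gives at best
\[
\Bigl|\sum_B\prod_i\widehat{G_i}\Bigr|\le\Bigl(\tfrac{|R|}{|R_1|}\Bigr)^{1/2}\prod_i\bigl(|R_i|\,\|G_i\|_2^2\bigr)^{1/2},
\]
which exceeds the desired bound by $(|R|/|R_1|)^{1/2}\ge 1$ whenever $R_1\ne R$. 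The overshoot is not fixable by relabeling or by H\"older: for counting measure $\|\widehat{G_i}\|_k\le\|\widehat{G_i}\|_2$, so H\"older yields $\sum_B\prod|\widehat{G_i}|\le|R|^{k/2}\prod\|G_i\|_2$, which matches the target only when $\prod|R_i|\ge|R|^k$, and the pair hypothesis only gives $\prod|R_i|\ge|R|^2$.

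What is missing is precisely the mechanism the paper uses: the induction on $k$ that collapses two of the functions into one. Writing $D=(R_1,R_2)$, $D=ST$ with $S\mid R_3\cdots R_k$ and $(T,R_3\cdots R_k)=1$, $R_1=DR_1'$, $R_2=DR_2'$, $R^*=R_1'R_2'S$, the paper defines $G^*$ on $\mathcal C(R^*)$ as a $\delta$-sum of $G_1G_2$, applies the induction hypothesis to $(G^*,G_3,\dots,G_k)$, and bounds $\|G^*\|_2$ by Cauchy--Schwarz. This gcd/lcm bookkeeping is exactly what makes the powers of $|R|$ balance; the ``autocorrelation reversal'' does not see the relations among the $R_i$ finely enough. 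Your physical intuition that the pair hypothesis prevents ``lcm collapse'' is correct, but it enters quantitatively through the decomposition $D=ST$ and the factor $|T|/|R|$, not through a single global orthogonality identity.
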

The proof follows Montgomery-Vaughan very closely.
\begin{proof}
We proceed by induction on $k$.

Assume first that $k = 2$. Then we must have $R_1 = R_2 = R$. By Cauchy-Schwarz,
\[ \left|\sum_{\substack{|A| < |R|}} G_1\left(\frac{A}{R}\right)G_2\left(\frac{A}{R}\right)\right| \le \left(\sum_{\substack{|A| < |R|}} \left|G_1\left(\frac AR \right)\right|^2\right)^{1/2}\left(\sum_{\substack{|A| < |R|}} \left|G_2\left(\frac AR \right)\right|^2\right)^{1/2},\]
which after a bit of rearranging gives the desired result.

Now assume by induction that the result holds for $j \le k-1$. For arbitrary $k$, set $D = (R_1, R_2)$, and write $D = ST$ with $S|R_3 \cdots R_k$ and $(T, R_3 \cdots R_k) = 1$. Furthermore, write $R_1 = DR_1'$ and $R_2 = DR_2'$. Consider any term in the sum. Since $\sum_i \frac{A_i}{R_i} = 0,$ we have $T|\left(\frac{A_1}{R_1} + \frac{A_2}{R_2}\right)$. Thus $\frac{A_1}{STR_1'} + \frac{A_2}{STR_2'}$ can be expressed as a fraction $\frac{A}{R_1'R_2'S}$.

By the Chinese Remainder theorem, $\frac{A_1}{STR_1'} = \frac{\a_1}{R_1'} + \frac{\b_1}{ST}$ and $\frac{A_2}{STR_2'} = \frac{\a_2}{R_2'} + \frac{\b_2}{ST},$ where $\frac{\b_2}{ST} = -\frac{\b_1}{ST} +  \frac{\g}{S}$ because $T|\left(\frac{A_1}{R_1} + \frac{A_2}{R_2}\right)$. Thus $\frac{A_1}{R_1}$ and $\frac{A_2}{R_2}$ can be written as $\frac{A_1}{R_1} = \frac{A_1'}{R_1'} + \frac{\d}{D}$ and $\frac{A_2}{R_2}= \frac{A_2'}{R_2'} + \frac{\sigma}{S} - \frac{\d}{D},$ with each rational function of degree less than $0$.

Let $R^* = R_1'R_2'S$. For each $A^*$ with $|A^*| < |R^*|$, $\frac{A^*}{R^*}$ is uniquely of the form $\frac{A^*}{R^*}= \frac{A_1'}{R_1'} + \frac{A_2'}{R_2'} + \frac{\sigma}{S}.$ Define

\[G^*\left(\frac{A^*}{R^*}\right) = \sum_{\substack{\d \in \mathcal C(D)}} G_1\left(\frac{A_1'}{R_1'} + \frac{\d}{D}\right)G_2\left(\frac{A_2'}{R_2'}+\frac{\sigma}{S} - \frac{\d}{D}\right).\]
Then the sum in question is
\[\sum_{\substack{A^* \in \mathcal C(R^*) \\ A_i \in \mathcal C(R_i) \\ A^*/R^* + \sum_{i=3}^k A_i/R_i = 0}} G^*\left(\frac{A^*}{R^*}\right) G_3\left(\frac{A_3}{R_3}\right) \cdots G_k\left(\frac{A_k}{R_k}\right).\]
Via Cauchy-Schwarz as well as the induction hypothesis, the above is
\[\le \frac{|T|}{|R|} \left(|R^*| \sum_{\substack{A^* \in \mathcal C(R^*)}} G^*\left(\frac{A^*}{R^*}\right)^2 \right)^{1/2} \prod_{i=3}^k\left(|R_i| \sum_{\substack{A_i \in \mathcal C(R_i)}} G_i\left(\frac{A_i}{R_i}\right)^2 \right)^{1/2}.\]
It remains to bound the sum over $G^*$ in terms of $G_1$ and $G_2$. By Cauchy-Schwarz,
\[G^*\left(\frac{A^*}{R^*}\right)^2 \le \left(\sum_{\d \in \mathcal C(D)} G_1\left( \frac{A_1'}{R_1'} + \frac{\d}{D}\right)^2 \right) \left(\sum_{\d \in \mathcal C(D)} G_2 \left(\frac{A_2'}{R_2'} + \frac{\sigma}{S} - \frac{\d}{D}\right)^2\right),\]
so summing over $A^*$ gives
\[\sum_{A^* \in \mathcal C(R^*)} G^*\left(\frac{A^*}{R^*}\right)^2 \le |S| \left(\sum_{A_1 \in \mathcal C(R_1)} G_1\left(\frac{A_1}{R_1}\right)^2 \right)\left(\sum_{A_2 \in \mathcal C(R_2)} G_2 \left(\frac{A_2}{R_2}\right)^2 \right).\]
\end{proof}

We now present several preliminary lemmas about the sums $E(\a)$. The following lemma is analogous to \cite[Lemma 4]{MontgomeryVaughanReducedResidues}.
\begin{lemma}\label{lem:mvl4}
For any polynomial $R \in \F_q[t]$,
\[\sum_{\substack{S \in \mathcal C(R)}} E\left(\frac SR \right)^2 = \max\{q^{2h},|R|q^h\}.\]

Moreover, for any polynomial $R\in \F_q[t]$ and any rational function $\a\in \F_q(t)$, 
\[\sum_{\substack{S \in \mathcal C(R)}} E\left(\frac{S}{R} + \a\right)^2 \begin{cases} = \max\{q^{2h},|R|q^h\} &\text{ if } |\a| < q^{-h}\\ \le |R|q^{h-1} &\text{ if } |\a| \ge q^{-h}.\end{cases}\]
\end{lemma}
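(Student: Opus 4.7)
The plan is to prove both assertions by expanding the square using $E(\b) = \sum_{M \in I(0,h)} e(M\b)$, interchanging the order of summation, and invoking orthogonality of additive characters on $\F_q[T]/(R)$: namely, $\sum_{S \mod R} e(NS/R) = |R|$ if $R \mid N$ and $0$ otherwise. This collapses the double sum to
\[\sum_{S \mod R} E(S/R + \a)^2 = |R| \sum_{\substack{M_1,M_2 \in I(0,h) \\ R \mid M_1 + M_2}} e((M_1+M_2)\a).\]

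For the first assertion (the $\a = 0$ case) I would then count pairs by the size of $R$. When $|R| > q^h$, the constraint $|M_1+M_2| < q^h < |R|$ forces $M_2 = -M_1$, yielding $q^h$ pairs and total $|R|q^h$. When $|R| \le q^h$, I parametrize $M_1 + M_2 = RL$ with $\deg L < h - \deg R$; for each such $L$, every $M_1 \in I(0,h)$ gives an admissible $M_2 = RL - M_1 \in I(0,h)$, producing $q^h \cdot q^{h - \deg R}$ pairs and hence total $q^{2h}$. The two cases combine to $\max\{q^{2h}, |R|q^h\}$.

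For the second assertion with $|\a| < q^{-h}$, every pair in the support of the master identity satisfies $\deg((M_1+M_2)\a) \le (h-1) + \deg \a \le -2$, so $\mathrm{res}((M_1+M_2)\a) = 0$ and $e((M_1+M_2)\a) = 1$ identically; the sum therefore reduces to the first assertion and equals $\max\{q^{2h}, |R|q^h\}$. For $|\a| \ge q^{-h}$ with $|R| \le q^h$, substituting $M_1 + M_2 = RL$ in the master identity rewrites the quantity as $|R| q^h \cdot E_{h - \deg R}(R\a)$, where $E_{h-\deg R}$ is the length-$(h-\deg R)$ exponential sum. By the immediately preceding lemma this factor is either $0$ or $q^{h - \deg R}$, according to whether the fractional part $\{R\a\}$ (in the $1/T$-adic sense) has degree below $-(h-\deg R)$.

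The principal obstacle is the large-$\a$ regime. Once the quantity is reduced to $|R|q^h \cdot E_{h-\deg R}(R\a)$, the delicate step is to characterize in terms of the Laurent coefficients of $\a$ exactly when $\{R\a\}$ drops into sufficiently low degree for the factor to be nonzero, and to confirm that the resulting upper bound matches the stated $|R|q^{h-1}$; the case $|R| > q^h$ must be addressed by the same master identity, where only $M_2 = -M_1$ contributes and the resulting sum must be shown to fit within the claimed bound.
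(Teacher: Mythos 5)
Your Fourier-expansion approach---squaring $E$, swapping sums, and using orthogonality of the additive characters of $\F_q[T]/(R)$ to get the master identity
\[\sum_{S \bmod R} E\!\left(\frac SR + \a\right)^2 = |R|\!\!\sum_{\substack{M_1,M_2 \in I(0,h) \\ R \mid M_1+M_2}}\!\! e\bigl((M_1+M_2)\a\bigr)\]
---is a genuinely different and cleaner route than the paper's. The paper instead counts directly the residues $S$ for which $E(S/R+\a)\neq 0$, using the explicit description of $E$ as either $0$ or $q^h$. Your treatment of the first part and of the case $|\a| < q^{-h}$ is complete and correct: the dyadic count of pairs $(M_1,M_2)$ with $M_1+M_2 = RL$ gives $\max\{q^{2h},|R|q^h\}$, and for $|\a| < q^{-h}$ every surviving pair has $\deg\bigl((M_1+M_2)\a\bigr) \le -2$, so the phase collapses to $1$.

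For the case $|\a| \ge q^{-h}$, however, you stop short, and it is worth saying plainly why: pushing your own identity through shows that the bound $\le |R|q^{h-1}$ stated in the lemma cannot be correct. If $|R| > q^h$, the constraint $R \mid M_1+M_2$ with $|M_1+M_2| < q^h$ forces $M_1+M_2 = 0$, so $e\bigl((M_1+M_2)\a\bigr) = 1$ for every pair and the sum equals $|R|q^h$ regardless of $\a$ --- strictly larger than $|R|q^{h-1}$. If $|R| \le q^h$, your reduction gives $|R|q^h \cdot E_{h-\deg R}(R\a)$, and since the inner factor is either $0$ or $q^{h-\deg R}$, the sum is either $0$ or $q^{2h}$; the latter also exceeds $|R|q^{h-1}$. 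A concrete instance: take $q=2$, $h=1$, $R=T$, $\a = 1/(T+1)$, so $|\a| = q^{-1} = q^{-h}$. Then $E(1/(T+1)) = 0$ but $E(1/T + 1/(T+1)) = E(1/(T^2+T)) = q^h = 2$, so the left-hand sum is $4$, while $|R|q^{h-1} = 2$. The paper's own proof of this case contains the corresponding slip: the displayed count ``$|R\a|\cdot \frac{1}{|\a| q^{h+1}} = |R|q^{-h-1}$'' is off by a factor of $q$; the correct number of admissible $S$ is $q^{\max\{\deg R - h, 0\}}$, which leads only to the trivial bound $\le |R|q^h$. (The downstream use in Lemma~\ref{lem:mvl7} never actually spends the extra $q^{-1}$: in the display bounding $\sum_{A^*} G^*(A^*/R^*)^2$ the second term is written with $|D|^2$ rather than $|D|^2 q^{-2}$, so replacing $|R|q^{h-1}$ by $|R|q^h$ in Lemma~\ref{lem:mvl4} leaves the proof of Lemma~\ref{lem:mvl7} and everything after it intact.) So do not attempt to ``finish'' part 2b by matching the stated $|R|q^{h-1}$: your method has already determined the sum exactly, and the right statement to prove is $\le |R|q^h$ unconditionally, with equality-or-zero as you found.
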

\begin{proof}
If $\deg R \le h$, then for all $S$ with $0 \ne |S| < |R|$, $h \ge \deg R - \deg S$, and thus $E\left(\frac SR \right)^2 = 0$. Meanwhile, $E(0)^2 = q^{2h}$, so in this case $\sum_{S \in \mathcal C(R)} E\left(\tfrac SR \right)^2 = q^{2h}$.

Now suppose $\deg R > h$. Then $E\left(\frac SR \right)$ is nonzero if and only if $\deg S < \deg R - h$. Thus
\[\sum_{\substack{S \in \mathcal C(R)}} E\left(\frac SR \right)^2 = \sum_{\substack{S \in \mathcal C(R) \\ |S| < |R|/q^h}} E\left( \frac SR \right)^2 = \sum_{\substack{S \in \mathcal C(R) \\ |S| < |R|/q^h}} q^{2h} = |R| q^h,\]
which completes the first portion.

Fix a rational function $\a$. For all $\frac{S}{R}$, $E\left(\frac{S}{R} + \a\right)$ is unchanged by replacing $\a$ with its fractional part; i.e, subtracting off the polynomial portion of $\a$ so that $|\a| < 1$, including the possibility that $\a = 0$. 

If a term $E\left(\frac SR + \a\right)$ is nonzero, then $\left|\frac SR + \a\right| < q^{-h}$. We'll split into two cases, when $|\a| < q^{-h}$ and when $|\a| \ge q^h$. First, if $|\a| < q^{-h}$, then $\left|\frac SR + \a\right| < q^{-h}$ if and only if $\left|\frac SR\right| < q^{-h}$. If $|R| \ge q^h$, there are $|R|/q^h$ values of $S$ satisfying this; if not, there is $1$ value. Thus if $|\a| < q^{-h}$, we have $\sum_{\substack{S\in \mathcal C(R)}} E\left(\frac{S}{R} + \a\right)^2 E\left(\frac SR + \a \right)=\mathrm{max}(q^{2h},|R|q^h)$.

Now assume $|\a| \ge q^{-h}$. If $\left|\frac SR + \a\right| < q^{-h}$, we must have $\left|\frac SR\right| = |\a| \ge q^{-h}$. Also, the first $\deg \a +h+1$ terms of $\frac SR$ are fixed, because they must cancel with the corresponding terms of $\a$ to yield a rational function of small enough degree. Correspondingly, the first $\deg \a + h + 1$ terms of $S$ are determined. Since $|S| = |R\a|$, there are at most $|R\a| \cdot \frac 1{|\a|\cdot|q^{h+1}|} = |R|q^{-h-1}$ nonzero choices of $S$. Thus in this case, $\sum_{\substack{S\in \mathcal C(R)}} E\left(\frac{S}{R} + \a\right)^2 \le |R|q^{h-1}$.
\end{proof}

The following lemma corresponds to Lemma 6 of Montgomery-Vaughan.
\begin{lemma}\label{lem:mvl6}
Let $R\in \F_q[t]$ be a polynomial, and let $\a,\b\in \F_q(t)$ be rational functions. Then
\[\sum_{S \in \mathcal C(R)} E\left(\frac SR + \a\right)E \left(\frac SR + \b\right) \ll E(\a-\b)q^{-h}\sum_{S\in \mathcal C(R)}E\left(\frac SR + \a\right)^2 \]
\end{lemma}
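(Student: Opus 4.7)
The plan is to expand both sides as character sums and show they are in fact equal, so the inequality holds with implicit constant $1$. Writing $E(S/R+\a) = \sum_{M_1 \in I(0,h)} e(M_1 S/R)\,e(M_1 \a)$ and analogously for $\b$, the left-hand side becomes
\[\sum_{M_1,\,M_2 \in I(0,h)} e(M_1 \a + M_2 \b) \sum_{S \bmod R} e\!\left(\frac{(M_1 + M_2) S}{R}\right).\]
By orthogonality of additive characters modulo $R$, the inner sum over $S$ equals $|R|$ when $R \mid M_1 + M_2$ and vanishes otherwise.

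Next I would parametrize the divisibility condition by writing $M_1 + M_2 = R N$ with $|N| < q^h/|R|$ (the case $\deg R \ge h$ forces $N = 0$). Fixing such an $N$ and letting $M_1$ range over $I(0,h)$, the companion $M_2 = RN - M_1$ lies in $I(0,h)$ automatically by the ultrametric inequality. This factors the left-hand side as
\[\sum_{S \bmod R} E\!\left(\tfrac{S}{R}+\a\right)E\!\left(\tfrac{S}{R}+\b\right) = |R| \sum_{|N| < q^h/|R|} e(RN\b) \sum_{M_1 \in I(0,h)} e(M_1 (\a-\b)) = |R|\,E(\a-\b) \sum_N e(RN\b).\]
Running the same computation with $\b$ replaced by $\a$ yields
\[\sum_{S \bmod R} E\!\left(\tfrac{S}{R}+\a\right)^2 = |R|\, q^h \sum_N e(RN\a).\]

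Finally I would split into cases based on the factor $E(\a-\b)$. If $E(\a-\b) = 0$, i.e.\ the fractional part of $\a-\b$ has absolute value $\ge q^{-h}$, then both sides vanish. Otherwise $|\a-\b|_{\mathrm{frac}} < q^{-h}$, and the key computation is that $e(RN(\a-\b)) = 1$ for every $N$ in the allowed range: writing $\a - \b$ as a polynomial plus a Laurent tail of absolute value at most $q^{-h-1}$, the tail contribution to $RN(\a-\b)$ has absolute value at most $q^{\deg R} \cdot q^{h - \deg R - 1} \cdot q^{-h-1} = q^{-2}$, so the residue vanishes and the character evaluates to $1$. Hence $\sum_N e(RN\b) = \sum_N e(RN\a)$, and
\[|R|\,E(\a-\b) \sum_N e(RN\a) \;=\; E(\a-\b)\, q^{-h} \cdot |R|\, q^h \sum_N e(RN\a),\]
which is exactly the right-hand side of the lemma. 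The only real obstacle is bookkeeping in the degenerate case $\deg R \ge h$, where the $N$-sum collapses to the single term $N = 0$; but there $e(0) = 1$ makes the identification $\sum_N e(RN\b) = \sum_N e(RN\a)$ completely trivial, so the argument is unchanged.
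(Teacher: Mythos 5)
Your proof is correct, and it takes a genuinely different route from the paper's. The paper's argument is direct and exploits the fact that in the function field setting $E(\a)$ is literally a scaled indicator function taking only the values $0$ and $q^h$: it notes by the ultrametric inequality that if $\|\a-\b\| \ge q^{-h}$ then for each $S$ at least one of $E(S/R+\a)$, $E(S/R+\b)$ is zero (so both sides vanish), while if $\|\a-\b\| < q^{-h}$ then $E(S/R+\a) \ne 0 \iff E(S/R+\b) \ne 0$, giving $E(S/R+\a)E(S/R+\b) = E(S/R+\a)^2$ termwise and hence equality. Your argument instead expands $E$ as a character sum, uses orthogonality modulo $R$ to isolate $R \mid M_1+M_2$, and reparametrizes by $N$ with $M_1+M_2 = RN$; the factorization $\text{LHS} = |R|\,E(\a-\b)\sum_N e(RN\b)$ and the residue estimate $|RN\cdot(\a-\b)_{\mathrm{frac}}| \le q^{-2}$ then deliver the result. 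The paper's proof is shorter and more specific to the non-archimedean setting, where $E$ degenerates to an indicator; your Fourier-theoretic proof is essentially the shape of the argument that would be needed in the integer case (where $E$ is a genuine geometric sum, not an indicator), so it buys robustness and a clearer view of why the statement is really an identity rather than just an inequality. Both proofs establish exact equality, so the $\ll$ holds with implied constant $1$. The one clarification worth making explicit is that the bound $|N| < q^h/|R|$ should be read as $|N| \le q^{h-\deg R - 1}$ since norms are integer powers of $q$; you use this correctly in the exponent bookkeeping $q^{\deg R}\cdot q^{h-\deg R-1}\cdot q^{-h-1} = q^{-2}$, but it is worth stating.
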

\begin{proof}
Again, we split into two cases. Assume first that $|\a-\b| \ge q^{-h},$ so $E(\a-\b) = 0$. Then either $\left|\frac SR + \b\right| \ge q^{-h}$, or $\left|\frac SR + \a\right| \ge q^{-h}.$ Thus for each $\frac SR$, either $E\left(\frac SR + \a\right) = 0$ or $E\left(\frac SR + \b\right) = 0$, so the product must be $0$, and thus the sum is $0$. 

Now assume that $|\a-\b| < q^{-h}$, so $E(\a-\b)=q^h$. By Lemma \ref{lem:exponential-sums-in-Fq}, if $|\a-\b| < q^{-h}$, then $E\left(\frac SR + \a\right) = E\left(\frac SR + \b\right)$ for all $S$. This gives the result.
\end{proof}

We are now ready to prove the following lemma, which is analogous to \cite[Lemma 7]{MontgomeryVaughanReducedResidues}. 

\begin{lemma}\label{lem:mvl7}
Let $k \ge 3$, and let $R_1, \dots, R_k\in \F_q[t]$ be squarefree polynomials with $|R_i| > 1$ for all $i$. Let $R = [R_1, \dots, R_k]$. Let $D=(R_1,R_2)$ and $D = ST$ with $S|R_3 \cdots R_k$ and $(T,R_3\cdots R_k) = 1$. Write $R_1 = DR_1'$, $R_2 = DR_2'$, and $R^* = R_1'R_2'S$. Define
\[S(R_1, \dots, R_k) := \sum_{\substack{A_i\in \mathcal R(R_i) \\ \sum_i A_i/R_i = 0}} \prod_{i=1}^k E\left(\frac{A_i}{R_i}\right).\]
If for some $i$, $|R_i| \le q^h$, then $S(R_1, \dots, R_k) = 0$. Otherwise,
\[S(R_1, \dots, R_k) \ll |R_1 \cdots R_k|\cdot|R|^{-1} (q^h)^{k/2}(X_1 + X_2 + X_3),\]
where
\begin{align*}
X_1 &= q^{-h/2}, \\
X_2 &= \begin{cases} |D|^{-1} &\text{ if } |R_1'| > q^h \\ 0 &\text{ otherwise, } \end{cases} \\
X_3 &= \begin{cases} |S|^{-1/2} &\text{ if } R_1 = R_2 \\ 0 &\text{ otherwise. } \end{cases}\\
\end{align*}
\end{lemma}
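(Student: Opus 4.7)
The first claim follows directly from the vanishing of $E(\alpha)$ established in the preceding lemma. When $|R_i| \le q^h$ for some $i$, the conditions $(A_i, R_i) = 1$ and $|R_i| > 1$ force $A_i \ne 0$ and $|A_i| < |R_i|$, so $-h \le -\deg R_i \le \deg(A_i/R_i) \le -1$; the preceding lemma then gives $E(A_i/R_i) = 0$ term by term, so $S(R_1,\ldots,R_k) = 0$.

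For the second claim, assume $|R_i| > q^h$ for all $i$. The plan is to adapt Montgomery--Vaughan's strategy for their Lemma~7, combining Cauchy--Schwarz on the pair $(R_1, R_2)$ with the Fundamental Lemma applied to the remaining $k-1$ indices. Using the Chinese Remainder Theorem with respect to $R_1 = DR_1'$, $R_2 = DR_2'$, and $D = ST$, write $A_1/R_1 = A_1'/R_1' + \delta_1/D$ and $A_2/R_2 = A_2'/R_2' + \delta_2/D$. The global constraint $\sum_i A_i/R_i \in \F_q[T]$ forces the $T$-parts of $\delta_1, \delta_2$ to sum to a prescribed constant (as in the Fundamental Lemma's proof), leaving one free parameter $\delta \bmod D$ subject to a shift $\sigma \bmod S$, and repackaging the sum as
\[S(R_1,\ldots,R_k) = \sum_{\substack{A^* \bmod R^*, \ A_i \bmod R_i\,(i \ge 3) \\ A^*/R^* + \sum_{i\ge 3} A_i/R_i \in \F_q[T]}} G^*(A^*/R^*) \prod_{i=3}^k E(A_i/R_i),\]
where $R^* = R_1' R_2' S$ and $G^*(A^*/R^*)$ is the inner $\delta$-sum of $E(A_1'/R_1' + \delta/D)\,E(A_2'/R_2' + \sigma/S - \delta/D)$ weighted by the coprimality indicators for $A_1$ and $A_2$.

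Applying Cauchy--Schwarz on the $A^*$-variable and then the Fundamental Lemma (with $k-1$ indices) to the second factor $\sum \prod_{i\ge 3} |E(A_i/R_i)|^2$ reduces matters to an estimate for $\sum_{A^*} |G^*(A^*/R^*)|^2$. Expanding this square over pairs $(\delta, \delta')$ and using Lemmas \ref{lem:mvl4} and \ref{lem:mvl6} to evaluate the inner sums in $A_1', A_2'$ splits the contribution into three regimes that produce $X_1$, $X_2$, and $X_3$ respectively. The diagonal $\delta = \delta'$ invokes the first formula of Lemma \ref{lem:mvl4} and gives the baseline term $X_1 = q^{-h/2}$. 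The off-diagonal $\delta \ne \delta'$ is handled by Lemma \ref{lem:mvl6}; the resulting contribution is nontrivial only when $|R_1'| > q^h$ (otherwise the sharper $|R_1'|q^{h-1}$ bound of Lemma \ref{lem:mvl4} absorbs it into $X_1$), and it yields $X_2 = |D|^{-1}$. Finally, when $R_1 = R_2$ (so $R_1' = R_2' = 1$) the outer $A^*$-sum collapses to one over residues modulo $S$, and direct counting against the constraint $\delta + \delta' \equiv \sigma \bmod S$ produces $X_3 = |S|^{-1/2}$.

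The main obstacle will be careful bookkeeping: tracking the coprimality conditions $(A_i,R_i)=1$ through the CRT decomposition so that the inner $A_1'$- and $A_2'$-sums decouple, and verifying that the three regimes above genuinely exhaust all $(\delta,\delta')$-contributions to $|G^*|^2$ without double counting or omission. The computation parallels Montgomery--Vaughan's integer argument but should be cleaner: because $E(\alpha)$ is piecewise constant in $\F_q(T)$, Lemmas \ref{lem:mvl4} and \ref{lem:mvl6} deliver exact bounds without analytic error terms, so the various remainder contributions that complicate the integer case simply do not appear.
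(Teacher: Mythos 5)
Your handling of the vanishing when some $|R_i| \le q^h$ is correct and matches the paper. You also correctly identify the high-level machinery: the CRT decomposition of $A_1/R_1$ and $A_2/R_2$ into a free parameter $\delta \bmod D$, the repackaging of the sum through $G^*$ on $R^* = R_1'R_2'S$, the Cauchy--Schwarz/Fundamental Lemma step reducing everything to $\sum_{A^*} G^*(A^*/R^*)^2$, and the reliance on Lemmas~\ref{lem:mvl4} and~\ref{lem:mvl6}. All of that is faithful to the paper.

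However, the case analysis you propose for extracting $X_1, X_2, X_3$ does not correspond to how those terms actually arise, and I don't see how it yields the stated bound. You propose to expand $|G^*|^2$ as a double sum over pairs $(\delta, \delta')$ and distinguish diagonal from off-diagonal. The paper instead bounds $G^*$ \emph{itself} once (not its square) via Lemma~\ref{lem:mvl6}, getting $G^*(A^*/R^*) \ll E(A^*/R^*)\,q^{-h}\sum_{\delta} E(\delta/D + A_1'/R_1')$, applies Lemma~\ref{lem:mvl4} to the $\delta$-sum (splitting according to whether $|A_1'/R_1'| < q^{-h}$ or not), squares this bound, and then sums over $A^*$. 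The three terms come from a case split on the \emph{sizes} of $|R^*|$ and $|R_1'|$ relative to $q^h$: $X_1$ covers the generic and borderline pieces, $X_2$ arises from the $|D|^2$ in the bound $\max\{q^{2h}, |D|^2\}$ on the small-$|A_1'/R_1'|$ piece (only relevant when $|R_1'| > q^h$), and $X_3$ comes from the fully degenerate case $|R^*| \le q^h$ forcing $R_1' = R_2' = 1$, hence $R_1 = R_2 = D$, where the $A^*$-sum has a single term. Your attribution of $X_2$ to off-diagonal $(\delta, \delta')$ contributions and of $X_3$ to a constraint ``$\delta + \delta' \equiv \sigma \bmod S$'' does not match this; in particular that congruence does not appear in the argument, and it is not clear that your diagonal/off-diagonal split would isolate exactly the quantities $q^{-h/2}$, $|D|^{-1}$, and $|S|^{-1/2}$. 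The asymmetric choice of summing $E(\delta/D + A_1'/R_1')$ rather than the $A_2'$-factor (so that the hypothesis $|R_1'| > q^h$ can be exploited) is also a deliberate feature of the paper's proof that your symmetric expansion over $(\delta, \delta')$ would obscure. So there is a genuine gap: the mechanism by which $X_2$ and $X_3$ are produced needs to be replaced with the size-based case analysis on $R^*$ and $R_1'$.
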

\begin{proof}
Assume first that for some $i$, $|R_i| \le q^h$. Then $E(A_i/R_i) = 0$ whenever $A_i \ne 0$, so in particular for all $A_i$ with $(A_i,R_i) = 1$, so the sum is $0$. Assume from now on that $|R_i| > q^h$ for all $i$.

We now return to the proof of the Fundamental Lemma. For $\frac{A^*}{R^*} = \frac{A_1'}{R_1'} + \frac{A_2'}{R_2'} + \frac{\sigma}{S}$, define
\[G^*\left(\frac{A^*}{R^*}\right) = \sum_{\substack{\d \in \mathcal C(D) \\ (DA_1' + \d R_1', R_1) = 1 \\ (DA_2' + R_2'T\sigma - R_2'\d, R_2) = 1}}E\left(\frac{A_1'}{R_1'} + \frac{\d}{D}\right) E\left(\frac{A_2'}{R_2'} + \frac{\sigma}{S} - \frac{\d}{D}\right).\]
For this sum to be nonempty, $(A_1',R_1') = (A_2',R_2') = 1$. Then
\[S(R_1, \dots, R_k) \le \frac{|T|}{|R|} \Big(|R^*| \sum_{A^*\in \mathcal C(R^*)}G^*\Big(\frac{A^*}{R^*}\Big)^2 \Big)^{1/2} \prod_{i=3}^k \Big(|R_i| \sum_{\substack{A_i \in \mathcal R(R_i) \\ |A_i|<|R_i|/q^h}} 1 \Big)^{1/2}\]
By Lemma \ref{lem:mvl4}, the product is $\ll |R_3 \cdots R_k|q^{hk/2-h}.$
Thus it suffices to show that
\[\sum_{A^*\in \mathcal C(R^*)} G^*\left(\frac{A^*}{R^*}\right)^2 \ll |R_1|\cdot|R_2|\cdot|S|q^{2h} (X_1^2 + X_2^2 + X_3^2).\]
By Lemma \ref{lem:mvl6}, 
\[G^*\left(\frac{A^*}{R^*}\right) \ll E\left(\frac{A^*}{R^*}\right)q^{-h}\sum_{\d \in \mathcal C(D)} E\left(\frac{\d}{D} + \frac{A_1'}{R_1'}\right),\]
so by Lemma \ref{lem:mvl4},
\[G^*\left(\frac{A^*}{R^*}\right) \ll \begin{cases} E\left(\frac{A^*}{R^*}\right) \max\{q^{h},|D|\} &\text{ if } \left|\frac{A_1'}{R_1'}\right| < q^{-h} \\
E\left(\frac{A^*}{R^*}\right) |D|q^{-1} &\text{ if } \left|\frac{A_1'}{R_1'}\right| \ge q^{-h}.\end{cases}\]
Summing over $A^*$ then gives
\begin{equation}\label{eq:mvl7-a*-sum}
\sum_{\substack{A^*\in \mathcal C(R^*)}} G^*\left(\frac{A^*}{R^*}\right)^2 \ll \sum_{\substack{A^*\in \mathcal C(R^*) \\ |A^*/R^*| < q^{-h} \\ |A_1'/R_1'| < q^{-h}}} E\left(\frac{A^*}{R^*}\right)^2\max\{q^{2h}, |D|^2\} +  \sum_{\substack{A^* \in \mathcal C(R^*) \\|A^*/R^*| < q^{-h} \\ |A_1'/R_1'| \ge q^{-h}}} E\left(\frac{A^*}{R^*}\right)^2|D|^2.
\end{equation}
Here as in the definition of $G^*$, for any nonzero term we must have $(A_1',R_1') = (A_2',R_2') = 1$. In particular, $A_1' \equiv 0 \mod R_1'$ only if $R_1' = 1$. We now split into cases based on whether or not $|R^*|>q^h$ and whether or not $|R_1'|>q^h$. 

First assume that $|R^*|>q^h$ and $|R_1'| > q^h$. Then
\begin{align*}
\sum_{\substack{A^* \in \mathcal C(R^*)}} G^*\left(\frac{A^*}{R^*}\right)^2 &\ll \max\{q^{2h},|D|^2\} q^{2h}\frac{|R_1'|}{q^h}\frac{|R_2'S|}{q^h} + |D|^2 \sum_{\substack{A^* \in \mathcal C(R^*) \\ |A^*/R^*|<q^{-h} \\ |A_1'/R_1'|\ge q^h}} E\left(\frac{A^*}{R^*}\right)^2 \\
&\ll \max\{q^{2h},|D|^2\}|R^*| + |D|^2 |R^*|q^h \\
&\ll |R_1|\cdot|R_2|\cdot|S|q^{2h}(X_1^2 + X_2^2).
\end{align*} 

Now assume that $|R^*|>q^h$ but $|R_1'| \le q^h$. The first sum in \eqref{eq:mvl7-a*-sum} is empty unless $R_1'=1$ (and $A_1' = 0$). If $R_1'= 1$, then $R_1 = D$, so $|D| > q^h$. Equation \eqref{eq:mvl7-a*-sum} then becomes
\[\sum_{\substack{A^* \in \mathcal C(R^*)}} G^*\left(\frac{A^*}{R^*}\right)^2 \ll q^{2h}|D|^2  + \frac{|R^*|}{q^h}q^{2h}|D|^2 = |R_1R_2S|q^{2h}\left(\frac{1}{|R^*|} + q^{-h}\right) \ll |R_1R_2S|q^{2h}(X_1^2).\]
If $R_1'\ne 1$, then the first sum is empty, so \eqref{eq:mvl7-a*-sum} becomes
\[\sum_{\substack{A^* \in \mathcal C(R^*)}} G^*\left(\frac{A^*}{R^*}\right)^2 \ll \frac{|R^*|}{q^h}q^{2h}|D|^2 = |R_1R_2S|q^{2h}(X_1^2).\]

Finally, assume that $|R^*|\le q^h$ and thus $|R_1'| \le q^h$. In this case the only nonzero term in \eqref{eq:mvl7-a*-sum} in either sum is when $A^* = 0$, which forces $A_1'=A_2'=\sigma=0$. But then since $(A_1',R_1') = (A_2',R_2')=1$, we also have $R_1'=R_2'= 1$, and thus $R_1 = R_2 = D$, which has magnitude $> q^h$. Thus
\[\sum_{\substack{A^* \in \mathcal C(R^*)}} G^*\left(\frac{A^*}{R^*}\right)^2 \ll q^{2h}|D|^2 = |R_1R_2S|q^{2h}\cdot|S|^{-1} =|R_1R_2S|q^{2h}X_3^2.\]
\end{proof}

We now turn to the proof of Theorem \ref{thm:mvfuncfield}, which corresponds to \cite[Lemma 8]{MontgomeryVaughanReducedResidues}. The main strategy here is a careful application of Lemma \ref{lem:mvl7}, keeping in mind that we can choose which variables play the roles of $R_1$ and $R_2$.

\begin{lemma}\label{lem:mvl8}
For any fixed $k \ge 3$, for $Q \in \F_q[t]$ squarefree, for $h \ge 1$ and $m_k(Q;h)$ defined by \ref{eq:defn-of-mk-in-fcn-field},
\[m_k(Q;h) \ll |Q| (q^h)^{k/2}\left(\frac{\phi(Q)}{|Q|}\right)^{k/2} \left(1 + \left((q^h)^{-1/2} + (q^h)^{-1/(k-2)} \right)\left(\frac{\phi(Q)}{|Q|}\right)^{-2^k+k/2}\right). \]
\end{lemma}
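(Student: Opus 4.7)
The strategy is to combine Lemma \ref{lem:mvl2} with the termwise bound from Lemma \ref{lem:mvl7}. Writing $m_k(Q;h) = |Q|(\phi(Q)/|Q|)^k V_k(Q;h)$ reduces the task to bounding $V_k(Q;h)$. By Lemma \ref{lem:mvl7}, only tuples $(R_1,\dots,R_k)$ with $|R_i|>q^h$ for every $i$ contribute, and each such tuple contributes at most
\[\frac{|R_1\cdots R_k|}{|R|\prod_i\phi(R_i)}(q^h)^{k/2}(X_1+X_2+X_3),\]
where $R=[R_1,\dots,R_k]$. I would split $V_k(Q;h) \ll (q^h)^{k/2}(\Sigma_1+\Sigma_2+\Sigma_3)$ into three pieces according to the three terms $X_j$, and in each piece use the freedom to choose which pair of indices plays the role of $(R_1,R_2)$ in Lemma \ref{lem:mvl7}.

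For $\Sigma_1$, the $X_1=q^{-h/2}$ factor has no dependence on the choice of pair, so I am reduced to a sum that is multiplicative in the primes dividing $Q$; at each irreducible $P\mid Q$ the local factor is $1+|P|^{-1}((1+|P|/\phi(P)^2)^k-1)$. The size constraint $|R_i|>q^h$ is enforced by inserting a Rankin-type weight $(|R_i|/q^h)^{\varepsilon}$ and absorbing the cost into the Euler product. After multiplying by the outer $(\phi(Q)/|Q|)^k$, this produces the $|Q|(q^h)^{k/2-1/2}$ contribution with the stated $\phi(Q)/|Q|$ dependence. The $\Sigma_3$ piece, where some pair $R_i=R_j$ must coincide, reduces after a union bound over the $\binom{k}{2}$ possible coinciding pairs to a sum of the same shape as $\Sigma_1$ (the $|S|^{-1/2}$ factor only helps), and is absorbed into the $\Sigma_1$ bound. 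For $\Sigma_2$ I would choose, for each tuple, the pair $(R_i,R_j)$ maximizing the gcd $D$, so that $|D|^{-1}$ gives the strongest possible saving; combining the constraint $|R_1'|=|R_1|/|D|>q^h$ with $|R_m|>q^h$ on the $k-2$ non-paired indices and distributing a single unit of Rankin weight across those $k-2$ indices produces exactly the $(q^h)^{-1/(k-2)}$ loss appearing in the statement.

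The heart of the argument is the $\Sigma_2$ estimate. The main obstacle will be to carry out the pairwise-choice optimization uniformly across tuples of varying structure, and to extract precisely the exponent $-1/(k-2)$ via Rankin without accumulating extra logarithmic losses. Concretely, I would partition the tuples according to the sizes $|R_i|$ and the pattern of shared prime divisors, then execute the multiplicative analysis at each prime $P\mid Q$: the $(\phi(Q)/|Q|)^{-2^k+k/2}$ dependence in the final bound reflects both the $2^k$ possible incidence patterns of the primes among $R_1,\dots,R_k$ and the $|R_i|/\phi(R_i)$ weights surviving the Euler product. The $\Sigma_1$ and $\Sigma_3$ contributions, by contrast, reduce fairly mechanically to standard divisor sums of the form $\sum_{R\mid Q}\mu(R)^2/\phi(R)^c$.
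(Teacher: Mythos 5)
Your outline for $\Sigma_1$ and $\Sigma_2$ is on the right track (though the paper extracts the $(q^h)^{-1/(k-2)}$ exponent more directly: since $R_i\mid\prod_{j\neq i}(R_i,R_j)$, some pair satisfies $|D|\ge|R_i|^{1/(k-1)}$, and when $|R_1'|>q^h$ this forces $|D|^{k-2}>q^h$ — no Rankin weight is needed, nor is one needed to impose $|R_i|>q^h$, since dropping that constraint only increases the multiplicative sum). You also have a small error in the local factor: it should be $1+|P|^{-1}\bigl((1+|P|/\phi(P))^k-1\bigr)$, not $|P|/\phi(P)^2$.

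The serious gap is in your treatment of $\Sigma_3$. You claim it "reduces to a sum of the same shape as $\Sigma_1$" and "is absorbed into the $\Sigma_1$ bound," but this cannot be right: $\Sigma_1$ carries the explicit $q^{-h/2}$ saving of $X_1$, whereas for $\Sigma_3$ the factor $X_3=|S|^{-1/2}$ can be as large as $1$ (if $R_1=R_2$ but $(R_1,R_3\cdots R_k)=1$, then $S=1$). Bounding $X_3\le 1$ and feeding that into the same Euler-product computation would yield $|Q|(q^h)^{k/2}(\phi(Q)/|Q|)^{-2^k+k}$ — which has the wrong dependence on $\phi(Q)/|Q|$ ($-2^k+k$ instead of $k/2$) and would destroy the main term of the lemma. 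In fact $\Sigma_3$ is \emph{the} main term, $|Q|(q^h)^{k/2}(\phi(Q)/|Q|)^{k/2}$, and obtaining that power of $\phi(Q)/|Q|$ requires real work: one must observe that when $X_3\neq 0$ and no triple among the $R_i$ coincides, $k$ is forced to be even with the $R_i$ equal in disjoint pairs; then factor $R=UV$ with $V$ the product of primes shared by at least two pairs, show by pigeonhole that some pair has $|S|\ge|V|^{4/k}$, bound the number of decompositions by $\tau_{k/2}(U)\,d(V)^{k/2}$, and carry out a \emph{different} Euler product at each $P\mid Q$ to produce the factor $(\phi(Q)/|Q|)^{-k/2}$. (The easier sub-case, where $R_1=R_2=R_i$ for some $i\ge 3$, forces $|S|=|R_1|>q^h$, and only that sub-case genuinely reduces to the $\Sigma_1$ computation.) None of this appears in your proposal, and it is the heart of the lemma; without it you cannot recover the leading term with the correct exponent on $\phi(Q)/|Q|$.
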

\begin{proof}
We begin with the bound that 
\[m_k(Q;h) \ll |Q| \left(\frac{\phi(Q)}{|Q|}\right)^k \sum_{\substack{R|Q \\ R \text{ monic}}}\sum_{\substack{R_i|Q \\ R_i \text{ monic} \\ |R_i| > 1 \\ [R_1, \dots, R_k] = R}} \frac{S(R_1, \dots, R_k)}{\phi(R_1) \cdots \phi(R_k)},\]
where $S(R_1, \dots, R_k) = \sum_{\substack{A_i \in \mathcal R(R_i) \\ \sum_i A_i/R_i = 0}} \prod_{i=1}^k E\left(\frac{A_i}{R_i}\right).$ 
We apply Lemma \ref{lem:mvl7}, but while using the fact that we have flexibility in how we label $R_1, \dots, R_k$ in our application of Lemma \ref{lem:mvl7}. For clarity, we will write $\tilde{R_1}$ and $\tilde{R_2}$ to be the $R_i$'s that serve as the first two in our application of Lemma \ref{lem:mvl7}. Choose $\tilde{R_1}$ and $\tilde{R_2}$ as follows.

If for any $i$, $|R_i|< q^h$, then $S(R_1, \dots, R_k)$ must be $0$, so assume that $|R_i| \ge q^h$ for all $i$. Let $R_{ij} = (R_i,R_j)$. For all $i$, since $R_i | \prod_{i \ne j} R_j$, $R_i|\prod_{i \ne j} R_{ij}$ as well. Thus for all $i$, there exists $j \ne i$ such that $|R_{ij}| \ge |R_i|^{1/(k-1)}$. If for some $i,j$, $|R_{ij}| \ge |R_i|^{1/(k-1)}$ but $R_i \ne R_j$, then pick $\tilde{R_1}$ and $\tilde{R_2}$ to be $R_i$ and $R_j$, respectively.

If no such $i$ exists, then for each $i$, there is some $j \ne i$ with $R_i = R_j$. If there exists any triple $R_i = R_j = R_l$, then pick $\tilde{R_1} = R_i$, $\tilde{R_2} = R_j$. If not, then the $R_i$'s must be equal in pairs and otherwise disjoint, and $k$ must be even. Without loss of generality, say that $R_1 = R_2, R_3 = R_4, \dots, R_{k-1} = R_k$. Write $R = UV$, where $V$ is the product of all primes dividing at least two $R_{2i}$'s, and $U$ is the product of all primes dividing exactly one $R_{2i}$. Then 
\[V^2 |\prod_{i=1}^{k/2} \Big(R_{2i},\prod_{j \ne i} R_{2j} \Big),\]
so there exists some $i$ with $\left|\left(R_{2i},\prod_{j \ne i} R_{2j}\right)\right| \ge |V|^{4/k}$. Take $\tilde{R_1}$ and $\tilde{R_2}$ to be $R_{2i}$ and $R_{2i-1}$. 

Now we return to our bound on $m_k(Q;h)$. We have
\[m_k(Q;h) \ll |Q| \left(\frac{\phi(Q)}{|Q|}\right)^k (q^h)^{k/2}\sum_{\substack{R|Q \\ R \text{ monic}}}\frac 1{|R|}\sum_{\substack{R_i|Q \\ R_i \text{ monic} \\ |R_i| > 1 \\ [R_1, \dots, R_k] = R}} \frac{|R_1 \cdots R_k|}{\phi(R_1)\cdots \phi(R_k)}(X_1 + X_2 + X_3),\]
where the $X_i$ arise by use of Lemma \ref{lem:mvl7} as described above. 

Consider the contribution from each $X_i$. Since $X_1 = q^{-h/2}$, the $X_1$ terms contribute
\begin{align*}
&\ll |Q| \left(\frac{\phi(Q)}{|Q|}\right)^k(q^h)^{k/2-1/2} \sum_{\substack{R|Q \\ R \text{ monic}}} \frac 1{|R|} \sum_{\substack{R_i|Q \\ R_i \text{ monic} \\|R_i| \ge q^h \\ [R_1, \dots, R_k] = R}} \frac{|R_1 \cdots R_k|}{\phi(R_1) \cdots \phi(R_k)} \\
&\ll |Q| \left(\frac{\phi(Q)}{|Q|}\right)^k(q^h)^{k/2-1/2} \prod_{P|Q} \left(1 + \frac 1{|P|} \left(2 + \frac 1{|P|-1}\right)^k\right) \\
&\ll |Q| (q^h)^{k/2-1/2} \left(\frac{\phi(Q)}{|Q|}\right)^{-2^k + k}.
\end{align*}

Now consider the $X_2$ contribution. If $X_2 \ne 0$, then $|R_1'|> q^h$, and by our choice of $R_1, R_2$, $|D| \ge |R_1|^{1/(k-1)} = |R_1' \cdot D|^{1/(k-1)}$. But then $|D|^{-1} \le q^{-h/(k-2)}$, so in turn $X_2 \le q^{-h/(k-2)}$. By the same logic as for the $X_1$ terms, the $X_2$ terms contribute $\ll |Q| (q^h)^{k/2-1/(k-2)}\left(\frac{\phi(Q)}{|Q|}\right)^{-2^k+k}.$

Finally, consider $X_3$. If $X_3 \ne 0$, then $R_1 = R_2$. By our choice of $R_1$ and $R_2$ for the application of Lemma \ref{lem:mvl7}, in this case each $R_i$ is equal to some $R_j$. If there exists some $R_i = R_1 = R_2$, with $i \ge 3$, then $S = R_1 = R_2$, so $|S| > q^h$, and thus for these terms we get a saving of $q^{-h/2}$ and the bound for $X_1$ applies. If not, then $k$ is even and the $R_i$'s must be equal in pairs. Let $R = UV$ as above, where $U$ is the product of irreducibles $P$ dividing exactly one pair of $R_i$'s, and $V$ is the product of all other irreducibles $P$ dividing $R$. Write $R_i = U_iV_i$, where $U_i = (R_i,U)$ and $V_i = (R_i,V)$. For fixed $U,V$, let $C(U,V)$ be the set of $k$-tuples $(R_1, \dots, R_k)$ yielding $U$ and $V$. There are at most $\tau_{k/2}(U)$ choices for $U_2, U_4, \dots, U_k,$ where $\tau_{k/2}$ is the $\frac k2$-fold divisor function. Since $V_i|V$, there are at most $\tau(V)^{k/2}$ choices for $V_2, V_4, \dots, V_k$. Thus $\#|C(U,V)| \le \tau_{k/2}(U)d(V)^{k/2}$. In our application of Lemma \ref{lem:mvl7} we have $|S| \ge |V|^{4/k},$ so
\begin{align*}
\sum_{\substack{UV|Q \\ \text{monic}}} \frac 1{|UV|} \sum_{(R_1, \dots, R_k) \in C(U,V)} \left(\prod_{i=1}^k \frac{|R_i|}{\phi(R_i)}\right) X_3 &\ll \sum_{\substack{UV|Q \\ \text{monic}}}\frac{\tau_{k/2}(U)(|U|/\phi(U))^2 \tau(V)^{k/2} (|V|/\phi(V))^k}{|U|\cdot|V|^{1+2/k}} \\
&= \prod_{P|Q} \left( 1+ \frac{k|P|}{2(|P|-1)^2} + \frac{2^{k/2}(|P|/(|P|-1))^k}{|P|^{1+2/k}}\right) \\
&\ll \left(\frac{\phi(Q)}{|Q|}\right)^{-k/2},
\end{align*}
so the $X_3$ terms contribute $\ll |Q| (q^h)^{k/2}\left(\frac{\phi(Q)}{|Q|}\right)^{k/2}$, which completes the proof.
\end{proof}

The final contribution of $X_3$ only arises when $k$ is even, so when $k$ is odd we have the estimate
\[m_k(Q;h) \ll |Q| ((q^h)^{k/2-1/2} + (q^h)^{k/2-1/(k-2)}) \left(\frac{\phi(Q)}{|Q|}\right)^{k-2^k}.\]
For $k = 3$ this implies that
\[m_3(Q;h) \ll |Q| q^h \left(\frac{\phi(Q)}{|Q|}\right)^{-5}.\]
In the case when $k = 5$, we can bound $m_5(Q;h)$ via a more involved argument.

\section{The fifth moment of reduced residues in the function field setting} \label{sec:funcfieldfifth}

Our goal in this section is to prove Theorem \ref{thm:funcfieldfifth}, which is a stronger bound on $m_5(Q;h)$ when $Q = \prod_{|P| \le q^{6h}} P$. We will also prove Corollary \ref{cor:singseriesff}, bounding $R_3(q^h)$ and $R_5(q^h)$ in the ring $\F_q[t]$. 

Lemma \ref{lem:mvl8} already implies a bound on $m_5(Q;h)$, showing that $m_5(Q:h) \ll |Q| (q^h)^{13/6} \left(\frac{\phi(Q)}{|Q|}\right)^{-27}$. Our goal is a bound where the power of $q^h$ is $2 + \ep$ for all $\ep > 0$; note that Conjecture \ref{conj:rkoddbound} would predict a bound where the power of $q^h$ is $2$. In turn, this will allow us to prove Corollary \ref{cor:singseriesff}, that $R_5(q^h) \ll q^{(2+\ep)h}$. 

\subsection{Proof of Theorem \ref{thm:funcfieldfifth}}

As in the proof of Lemma \ref{lem:mvl8}, we begin by bounding
\begin{equation*}
m_5(Q;h) \ll |Q| \left(\frac{\phi(Q)}{Q}\right)^5 \sum_{\substack{R|Q \\ R \text{ monic}}} \sum_{\substack{R_i|Q \\ R_i \text{ monic} \\ |R_i| > 1 \\ [R_1, \dots, R_5] = R}} \frac{S(R_1, \dots, R_5)}{\phi(R_1) \cdots \phi(R_5)},
\end{equation*}
where $S(R_1, \dots, R_5) = \sum_{\substack{A_i \in \mathcal R(R_i) \\ \sum_i A_i/R_i = 0}} \prod_{i=1}^5 E\left(\frac{A_i}{R_i}\right)$.

Our goal is to apply Lemma \ref{lem:mvl7} to bound the size of $S(R_1, \dots, R_5)$. But, when applying this lemma, we can freely choose which of the $R_i$'s plays the roles of $R_1$ and $R_2$. As in the previous section, we will denote our choice by $\tilde{R_1}$ and $\tilde{R_2}$. If any $R_i$ satisfies $|R_i|<q^h$, the choice is immaterial, so assume that $|R_i|\ge q^h$ for all $i$. If there is any triple $R_i, R_j, R_\ell$ with $R_i = R_j=R_\ell$, pick $\tilde{R_1} = R_i$ and $\tilde{R_2} = R_j$. In this case $X_2$ will have no contribution, and $X_3$ and $X_1$ will each be $\ll q^{-h/2}$, for a total contribution to $m_5(Q;h)$ from these terms (as in the proof of Lemma \ref{lem:mvl8}) of $\ll |Q| q^{2h} \left(\frac{\phi(Q)}{|Q|}\right)^{-27}$. If there is no such triple, but there exists $R_i \ne R_j$ with either $\left|\frac{R_i}{(R_i,R_j)}\right|< q^h$, or $\left|\frac{R_i}{(R_i,R_j)} \right| \ge q^h$ and $|(R_i,R_j)| \ge q^{h/2}$, then we choose $\tilde{R_1} = R_i$ and $\tilde{R_2} = R_j$. In this case we have $X_3 = 0$ and $X_1, X_2$ each contributing $\ll q^{-h/2}$, and again the total contribution to $m_5(Q;h)$ from these terms is $\ll |Q| q^{2h} \left(\frac{\phi(Q)}{|Q|}\right)^{-27}$. So, it remains to bound what happens in the remaining cases. We first show that in the remaining cases, up to some reordering, certain factors of $R_2$ and $R_3$ are bounded.

\begin{lemma}\label{lem:notmvtuples}
For fixed squarefree $Q \in \F_q[t]$, let $(R_1, \dots, R_5)$ be a tuple of divisors of $Q$ such that
\begin{itemize}
	\item $|R_i| \ge q^h$ for all $i$,
	\item no three $R_i$'s are equal,
	\item for any $R_i$, $R_j$, either $R_i = R_j$, or $\left|\frac{R_i}{(R_i,R_j)}\right| \ge q^h$ and $|(R_i,R_j)| < q^{h/2}$, and
	\item $R_1$, $R_2$, and $R_3$ are all distinct.
\end{itemize}
Then 
\begin{itemize}
	\item $\left|\frac{R_2}{(R_1,R_2)}\right| \ge q^h$, and 
	\item $\left|\frac{R_3}{(R_3,R_1R_2)}\right| \ge q^{h/2}$.
\end{itemize}
\end{lemma}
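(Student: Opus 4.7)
The first conclusion is essentially a restatement of the hypothesis. Since $R_1 \ne R_2$ by the distinctness assumption, the third bullet (applied with the roles $R_i = R_2$, $R_j = R_1$) gives directly that $\left|R_2/(R_1,R_2)\right| \ge q^h$. No work is needed here.

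For the second conclusion, the plan is to bound $|(R_3, R_1R_2)|$ from above in two different ways and then case-split on the size of $|R_3|$. Since $R_1, R_2, R_3$ divide the squarefree polynomial $Q$, an irreducible $P$ divides $(R_3, R_1R_2)$ if and only if $P \mid R_3$ and either $P \mid R_1$ or $P \mid R_2$. Hence $(R_3, R_1R_2)$ divides $(R_3,R_1)(R_3,R_2)$, and in particular
\[
|(R_3, R_1R_2)| \le |(R_3,R_1)| \cdot |(R_3,R_2)|.
\]
Now $R_3 \ne R_1$ and $R_3 \ne R_2$ by the distinctness hypothesis, so applying the third bullet to each pair yields two bounds simultaneously: $|(R_3,R_i)| < q^{h/2}$ and $|(R_3,R_i)| \le |R_3|/q^h$, for $i = 1, 2$.

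The case split is on whether $|R_3|$ exceeds $q^{3h/2}$. If $|R_3| \ge q^{3h/2}$, then using $|(R_3,R_i)| < q^{h/2}$ for $i=1,2$ gives $|(R_3, R_1R_2)| < q^h$, so $|R_3/(R_3,R_1R_2)| > |R_3|/q^h \ge q^{h/2}$. If instead $|R_3| < q^{3h/2}$, use the other bound $|(R_3,R_i)| \le |R_3|/q^h$ for both $i$ to obtain $|(R_3,R_1R_2)| \le |R_3|^2/q^{2h}$, whence $|R_3/(R_3,R_1R_2)| \ge q^{2h}/|R_3| > q^{h/2}$. In either case the desired inequality holds.

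There is no serious obstacle; the only subtlety is recognizing that the hypothesis supplies two independent bounds on each pairwise gcd — a small bound $q^{h/2}$ that is useful when $|R_3|$ is large, and a proportional bound $|R_3|/q^h$ that is useful when $|R_3|$ is small — and that the threshold $q^{3h/2}$ is exactly where the two bounds balance to give the clean exponent $q^{h/2}$.
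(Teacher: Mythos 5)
Your proof is correct, but it takes a genuinely different route from the paper's. The paper's argument avoids a case split: it factors $(R_3,R_1R_2)=R_{13}R_{23}R_{123}$ (where $R_{13}$, $R_{23}$, $R_{123}$ collect the primes dividing exactly $\{R_1,R_3\}$, exactly $\{R_2,R_3\}$, and all of $\{R_1,R_2,R_3\}$, respectively), observes that $(R_1,R_3)=R_{13}R_{123}$, and hence that
\[
\left|\frac{R_3}{(R_3,R_1R_2)}\right| = \left|\frac{R_3}{(R_1,R_3)}\right|\cdot\frac{1}{|R_{23}|} \ge \frac{q^h}{q^{h/2}} = q^{h/2},
\]
using the quotient bound for the pair $(R_1,R_3)$ and the gcd bound $|R_{23}|\le|(R_2,R_3)|<q^{h/2}$ for the pair $(R_2,R_3)$ — one hypothesis bound per pair, combined via the identity, with no threshold to tune. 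Your argument instead starts from the coarser overcount $|(R_3,R_1R_2)|\le|(R_3,R_1)|\cdot|(R_3,R_2)|$ (which discards the fact that $R_{123}$ is counted twice on the right), and compensates by observing that the hypothesis gives \emph{two} bounds on each gcd — the absolute bound $q^{h/2}$ and the proportional bound $|R_3|/q^h$ — and that whichever is sharper depends on $|R_3|$, with crossover at $q^{3h/2}$. Both proofs are clean; the paper's is marginally tighter because the multiplicative identity exactly absorbs the overlap $R_{123}$, while yours trades that precision for a transparent two-case estimate that treats $R_1$ and $R_2$ symmetrically. Your closing remark correctly identifies $q^{3h/2}$ as the balance point of the two hypothesis bounds.
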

Loosely, this lemma states that in the cases that we cannot already bound by the tools of the previous section, prime factors must ``spread out'' among the first three $R_i$'s.
\begin{remark}
The bound on $\left|\frac{R_3}{(R_3,R_1R_2)}\right|$ above is worse than the bound on $\left|\frac{R_2}{(R_1,R_2)}\right|$. In order to apply Lemma \ref{lem:intervalbound} below, we will need both of them to be at least of size $q^{h/2}$, so the bound on $\left|\frac{R_2}{(R_1,R_2)}\right|$ is better than necessary.

However, the fact that these bounds get worse is precisely what prevents us from applying our technique to bound higher moments. If instead we applied the same argument to a $7$-tuple $(R_1, \dots, R_7)$ of divisors of $Q$, we would not be able to guarantee that $\left|\frac{R_4}{(R_4,R_1R_2R_3)}\right| \ge q^{h/2}$, even if we weaken the conditions to allow reordering. This threshold is crucial for our argument, which does not generalize to $7$-tuples.
\end{remark}
\begin{proof}
The fact that $\left|\frac{R_2}{(R_1,R_2)}\right| \ge q^h$, follows directly from the third assumption, since $R_1 \ne R_2$. 

For the second conclusion, let $R_{123} = \gcd(R_1,R_2,R_3)$ and let $R_{13} = \frac{(R_1,R_3)}{\gcd(R_1,R_2,R_3)}$ and $R_{23} = \frac{(R_2,R_3)}{(R_1,R_2,R_3)}$, so that $R_{13}$ is the product of all primes dividing $R_1$ and $R_3$ but not $R_2$, and vice versa. Then $(R_3,R_1R_2) = R_{13}R_{23}R_{123}$. By assumption, $|(R_2,R_3)| < q^{h/2}$, so $|R_{23}R_{123}|<q^{h/2}$, and in particular $|R_{23}|<q^{h/2}$. Now assume by contradiction that $\left|\frac{R_3}{(R_3,R_1R_2)}\right| < q^{h/2}$. Then
\begin{align*}
\left| \frac{R_3}{(R_1,R_3)}\right| &= \left|\frac{R_3}{R_{13}R_{123}} \right| = \left|\frac{R_3}{R_{13}R_{23}R_{123}}\right| \cdot |R_{23}| < q^{h/2} \cdot q^{h/2} = q^h,
\end{align*}
which contradicts the third assumption because $R_1 \ne R_3$. 
\end{proof}

The following auxiliary lemma provides a standard bound on $\tau_k$, the $k$-fold divisor function, in the function field setting. We will also use that $\phi(F) \gg \frac{|F|}{\log\log|F|}$ for all $F \in \F_q[t]$. 

\begin{lemma}\label{lem:tkbound}
Fix $k \ge 1$. Let $M = \max_{b \ge 1} (\tau_k(t^b))^{1/b}$. Then
\[\limsup_{\deg F \to \infty} \frac{\log \tau_k(F) \log \log |F|}{\log |F|} = \log M,\]
and thus for all $\ep > 0$, $\tau_k(F) \ll_\ep |F|^\ep$. 
\end{lemma}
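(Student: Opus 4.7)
The plan is to follow the classical proof of the Wigert bound $\log \tau_k(n) \le (\log k + o(1))\log n /\log\log n$ in the function field setting, exploiting that $\tau_k$ is multiplicative and that for any monic irreducible $P$ and any $a \ge 1$, $\tau_k(P^a) = \binom{a+k-1}{k-1} = \tau_k(T^a)$ depends only on $a$. By definition of $M$ this gives $\tau_k(P^a) \le M^a$. Since $\tau_k(T^b)$ grows only polynomially in $b$, $\tau_k(T^b)^{1/b} \to 1$, so $M$ is attained at some finite integer $b^* \ge 1$. Writing $F = \prod_P P^{a_P}$ then gives $\log \tau_k(F) = \sum_{P \mid F} \log \tau_k(T^{a_P})$.

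For the upper bound $\limsup \le \log M$, I would split this sum at a cutoff $D = (1-\eta)\log\log|F|/\log q$ with $\eta > 0$ fixed. The large-prime part $\deg P > D$ uses $\log \tau_k(T^{a_P}) \le a_P \log M$ and $\sum_{\deg P > D} a_P \le \deg F/D$, yielding a contribution of at most $(\log M/(1-\eta)) \cdot \log|F|/\log\log|F|$. The small-prime part $\deg P \le D$ uses the crude bound $\log \tau_k(T^{a_P}) \ll_k \log(a_P+k) \ll_k \log\log|F|$ together with the prime polynomial theorem $\sum_{d \mid n} d\pi_q(d) = q^n$ (so that the number of monic irreducibles of degree $\le D$ is $\ll q^D/D$, where $\pi_q(d)$ denotes the count of monic irreducibles of degree $d$), yielding a contribution of $\ll_k (q^D/D)\log\log|F| \ll_k (\log|F|)^{1-\eta} = o(\log|F|/\log\log|F|)$. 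Together these give $\limsup \le \log M/(1-\eta)$ for every fixed $\eta > 0$; letting $\eta \to 0$ then gives $\limsup \le \log M$. The main technical subtlety is calibrating $D$ so that $q^D$ is just slightly smaller than $\log|F|$, ensuring the large-prime term (which carries the sharp constant) dominates the unavoidable small-prime noise.

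For the lower bound $\limsup \ge \log M$, take $F_D := \prod_{\deg P \le D} P^{b^*}$, where $b^*$ attains the maximum in $M$. The prime polynomial theorem gives $\deg F_D = b^*\sum_{d \le D} d\pi_q(d) \sim b^* q^D$ and $\omega(F_D) = \sum_{d \le D}\pi_q(d) \sim q^D/D$. Hence $\log|F_D| \sim b^* q^D \log q$, $\log\log|F_D| \sim D\log q$, and $\log\tau_k(F_D) = \omega(F_D)\log \tau_k(T^{b^*}) \sim (q^D/D)\log \tau_k(T^{b^*})$, so
\[\frac{\log \tau_k(F_D)\log\log|F_D|}{\log|F_D|} \sim \frac{\log \tau_k(T^{b^*})}{b^*} = \log M\]
as $D \to \infty$. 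Combined with the upper bound this establishes the equality. The consequent claim $\tau_k(F) = O_\ep(|F|^\ep)$ is then immediate, since the upper bound yields $\log \tau_k(F) \le \ep \log|F|$ for $\deg F$ sufficiently large.
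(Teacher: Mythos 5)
Your proof is correct and follows the same high-level strategy as the paper, but there are a couple of points worth comparing.

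For the lower bound, you build the extremal sequence from all primes of degree at most $D$, namely $F_D = \prod_{\deg P \le D} P^{b^*}$. The paper instead takes $F = \prod_{\deg P = d} P^{b^*}$, i.e.\ primes of a \emph{single} degree $d$, which gives cleaner asymptotics: $\log|F| \sim b^* q^d \log q$ and $\omega(F) \sim q^d/d$ with no ambiguity. Your version works too, but the intermediate claims $\deg F_D \sim b^* q^D$ and $\omega(F_D) \sim q^D/D$ are each off by a factor of $\frac{q}{q-1}$: since $\sum_{d\le D} d\pi_q(d) = \sum_{d\le D} q^d + O(q^{D/2}) \sim \frac{q}{q-1}q^D$, the true asymptotics are $\deg F_D \sim b^* \frac{q}{q-1}q^D$ and $\omega(F_D) \sim \frac{q}{q-1}\frac{q^D}{D}$. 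Fortunately this extra constant appears identically in $\log\tau_k(F_D)$ and $\log|F_D|$ and cancels in the quotient, so the final limit $\log M$ is unaffected; but the individual ``$\sim$'' statements should be corrected (or replaced with $\asymp$, or one should use the paper's single-degree construction).

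For the upper bound, you carry out a complete Wigert-style splitting into small and large primes, while the paper simply states that this direction ``follows Shiu's proof closely'' and omits it. Your calibration $D = (1-\eta)\log\log|F|/\log q$, the bound $\sum_{\deg P > D} a_P \le \deg F / D$ for the large primes, and the estimate $\log\tau_k(T^{a_P}) \ll_k \log(a_P+k) \ll_k \log\log|F|$ combined with the prime counting $\sum_{d\le D}\pi_q(d) \ll q^D/D$ for the small primes are all correct, and the small-prime contribution $(\log|F|)^{1-\eta}$ is indeed $o(\log|F|/\log\log|F|)$. So in that respect your write-up is more self-contained than the paper's.
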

\begin{proof}
The proof of the above lemma follows closely along the lines of Shiu \cite{pshiu}. We will show one direction of the statement, adapted to our setting; the other direction also follows very closely, so we omit it. Note first that 
\[1 \le (\tau_k(t^b))^{1/b} = \binom{b + k-1}{b}^{1/b} < \left(\frac{(b+k-1)e}{k-1}\right)^{(k-1)/b} \to 1\]
as $b \to \infty$, so $M$ exists. 

We now show that $\limsup_{\deg F \to \infty} \frac{\log \tau_k(F) \log \log |F|}{\log |F|} \ge \log M$. Fix $b$ such that $\tau_k(t^b) = M^b$. Let 
\[F = \prod_{\substack{\deg P = d \\ P \text{ irred.}}} P^b,\]
so that $\tau_k(F) = \prod_{\deg P = d} \tau_k(P^b) = (\tau_k(t^b))^{\pi(d;\mathbb F_q)} = M^{b\pi(d;\mathbb F_q)}$. We have that $\pi(d;\F_q) \sim \frac{q^d}{d}$ as $d \to \infty$, so that
\[\log |F| = bd \log q \pi(d;\F_q) \sim b q^d\log q,\]
and
\[\log \log |F| = d \log q + O(1).\]
Thus as $d \to \infty$, 
\begin{align*}
\log \tau_k(F) &= b \pi(d;\F_q) \log M \\
&\sim b \log M \cdot \frac{q^d}{d} \sim \frac{\log M \log |F|}{\log \log |F|}, 
\end{align*}
so $\limsup_{\deg F \to \infty} \frac{\log \tau_k(F) \log \log |F|}{\log |F|} \ge \log M$. 

As mentioned above, the proof that $\limsup_{\deg F \to \infty} \frac{\log \tau_k(F) \log \log |F|}{\log |F|} \le \log M$ also follows Shiu's proof in \cite{pshiu} closely, so we omit it.
\end{proof}
The above bound implies that for all $\ep > 0$, $\tau_k(F) = |F|^{O(1/\log\log |F|)} = O_{\ep}(|F|^{\ep})$.

Here we have a final preparatory lemma before the main proposition leading to the bound on $m_5(Q;h)$. In what follows, our main strategy will be carefully isolating factors of the $R_i$'s in order to bound the number of terms in our sum. In doing so, we will make use of the following bound. 

\begin{lemma}\label{lem:intervalbound}
Let $Q\in \F_q[t]$ be a squarefree polynomial, and let $n \in \N_{\ge 2}$. Let $\mathcal I \ins \F_q(t)$ be an interval of size $q^{-h}$. That is to say, for some rational function $\a \in \F_q(t)$, let $\mathcal I := \{\b \in \F_q(t):|\a-\b|<q^{-h}\}$. Assume in the following that $X_i,Y_i \in \F_q[t]$ for all $i$. Then for any $\ep > 0$,
\[\sum_{\substack{Y_1, \dots, Y_n|Q \\ X_i \in \mathcal R(Y_i) \\ \sum_i X_i/Y_i \in \mathcal I \\ q^{h/2}\le \left|\prod_i Y_i\right| \le q^{2h}}} \frac{\mu\left(\prod_i Y_i\right)^2}{\prod_i \phi(Y_i)^2} \ll_{n,\ep} q^{-h(1-\ep)}. \]
\end{lemma}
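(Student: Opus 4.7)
The plan is to collapse the sum over tuples $(Y_1,\dots,Y_n)$ into a sum over $Y := \prod_i Y_i$ via the Chinese Remainder Theorem, stratify by $|Y|=q^m$, and control the resulting count of reduced fractions in $\mathcal I$ by a Farey-type spacing inequality. The factor $\mu(\prod_i Y_i)^2$ restricts the sum to tuples of squarefree, pairwise coprime divisors of $Q$. For such a tuple, with $Y := \prod_i Y_i$, the CRT yields a bijection between $n$-tuples $(X_1,\dots,X_n)$ with $X_i \bmod Y_i$ and $(X_i,Y_i)=1$ and residues $X \bmod Y$ with $(X,Y)=1$, via $X = \sum_i X_i(Y/Y_i)$; since $|X_i(Y/Y_i)| < |Y|$ the non-Archimedean triangle inequality gives $|X|<|Y|$, and one checks $\sum_i X_i/Y_i = X/Y$ as rational functions. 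Multiplicativity gives $\prod_i \phi(Y_i) = \phi(Y)$, and each squarefree $Y$ admits $n^{\omega(Y)}$ ordered decompositions into $n$ pairwise coprime factors. The original sum therefore equals
\[\sum_{\substack{Y\mid Q,\ \mu(Y)^2 = 1 \\ q^{h/2} \le |Y| \le q^{2h}}} \frac{n^{\omega(Y)}}{\phi(Y)^2}\cdot \#\bigl\{X\bmod Y : (X,Y) = 1,\ X/Y \in \mathcal I\bigr\}.\]

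Next I would stratify by $|Y| = q^m$ for integer $m \in [h/2,2h]$. The key input is a Farey-type spacing bound: for two distinct reduced fractions $X_1/Y_1,X_2/Y_2$ with monic $Y_j$ of norm $q^m$, the numerator $X_1 Y_2 - X_2 Y_1$ of their difference is a nonzero polynomial, so
\[\left|\frac{X_1}{Y_1}-\frac{X_2}{Y_2}\right| = \frac{|X_1 Y_2 - X_2 Y_1|}{|Y_1 Y_2|} \ge q^{-2m}.\]
It follows that at most $q^{2m-h}+1$ pairs $(Y,X)$ with $|Y|=q^m$ and $(X,Y)=1$ satisfy $X/Y \in \mathcal I$. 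For the pointwise weight, Lemma~\ref{lem:tkbound} gives $n^{\omega(Y)} = \tau_n(Y) \le q^{O(m/\log m)}$ when $|Y|=q^m$, and $\phi(Y) \gg |Y|/\log\log|Y|$ yields $\phi(Y)^{-2} \ll (\log m)^2 q^{-2m}$, so each contributing pair is weighted by at most $(\log m)^2 q^{O(m/\log m)-2m}$.

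Combining the count and the weight, the $|Y|=q^m$ piece is bounded by
\[(q^{2m-h}+1)\cdot (\log m)^2 q^{O(m/\log m)-2m} \ll q^{-h}(\log m)^2 q^{O(m/\log m)},\]
using $m \ge h/2$ so that $q^{-2m} \le q^{-h}$ absorbs the ``$+1$''. Summing over $m$ from $h/2$ to $2h$ gives the claim, with $q^{m/\log m}$ absorbing the implicit $O(1)$ constant in the exponent. The main obstacle is unifying the low-$m$ regime (where for each $Y$ at most $O(1)$ reduced fractions can lie in $\mathcal I$ but many $Y$'s of norm $q^m$ exist) with the high-$m$ regime (where fewer $Y$'s exist but each can contribute up to $q^{m-h}$ fractions): the single global Farey bound $q^{2m-h}+1$ handles both cases uniformly, and the cutoff $|Y| \ge q^{h/2}$ is precisely what makes the ``$+1$'' harmless, while the cutoff $|Y|\le q^{2h}$ keeps the implicit exponent $O(m/\log m)$ within the regime where Lemma~\ref{lem:tkbound} delivers a useful bound.
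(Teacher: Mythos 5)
Your proof is correct and follows essentially the same route as the paper's: collapse via CRT to a single squarefree modulus $Y$ weighted by $\mu(Y)^2\tau_n(Y)/\phi(Y)^2$, stratify by $\deg Y = m$, bound $\tau_n(Y)=n^{\omega(Y)}$ by Lemma~\ref{lem:tkbound} and $\phi(Y)^{-2}$ by $(\log m)^2 q^{-2m}$, and count points of $\mathcal I$ with bounded denominator by a Farey-spacing bound, with the cutoff $|Y| \ge q^{h/2}$ absorbing the additive constant. You make the CRT bijection and the minimum-spacing argument a bit more explicit than the paper, but there is no substantive difference in method.
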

\begin{proof}
For given $X_1, \dots, X_n$ and $Y_1,\dots,Y_n$, let $X$ and $Y$ be defined so that $Y = \prod_i Y_i$ and $\frac{X}{Y} = \sum_i \frac{X_i}{Y_i}$. Then for all tuples considered in the sum, $\frac XY \in \mathcal I$ and $q^{h/2} \le |Y| \le q^{2h}$. Proceed by counting the number of possibilities for $\frac XY$ satisfying this constraint, which is bounded above by the number of points in $\mathcal I$ with denominator smaller than $q^{2h}$, and finally count the number of ways of splitting $Y$ up into $Y_1, \dots, Y_n$. However, we want to also consider the weighting in the sum of $\frac 1{\phi(Y)^2}$, so we start by splitting the sum up into different sizes of $Y$, and then applying bounds on $\phi(Y)$. 

To begin with, we rewrite the sum in terms of $X$ and $Y$. Note that all $Y_i$ in our sum are relatively prime, because of the M\"obius factor. Thus $Y$ is squarefree and $\phi(Y) = \prod_i \phi(Y_i)$. Moreover, a choice of $X$, $Y$, and a decomposition $Y = Y_1 \cdots Y_n$ determines $X_i$ for each $i$ by the Chinese Remainder Theorem. Our sum is thus equal to 
\[\sum_{\substack{Y|Q \\ q^{h/2} \le |Y| \le q^{2h}}} \sum_{\substack{X \in \mathcal R(Y) \\ X/Y \in \mathcal I}}\frac{\mu(Y)^2}{\phi(Y)^2} \#\{Y_1, \dots, Y_n : Y_1 \cdots Y_n = Y\}.\]
Now split the sum up according to $|Y|$, defining $m := \deg Y$. The sum is then equal to
\begin{align*}
&\sum_{m=h/2}^{2h} \sum_{\substack{Y|Q \\ |Y| = q^m}} \sum_{\substack{X \in \mathcal R(Y)\\X/Y \in \mathcal I}} \frac{\mu(Y)^2}{\phi(Y)^2} \tau_n(Y) \\
&\ll_{n,\ep} \sum_{m=h/2}^{2h} (q^m)^{\ep/3} \sum_{\substack{Y|Q \\ |Y| = q^m}} \sum_{\substack{X \in \mathcal R(Y)\\X/Y \in \mathcal I}} \frac{\mu(Y)^2(\log \log |Y|)^2}{|Y|^2},
\end{align*}
by Lemma \ref{lem:tkbound} and the fact that $\phi(Y)^{-2} \ll \left(\frac{|Y|}{\log \log |Y|}\right)^{-2}$. We can further relax the condition that $|Y| = q^m$ to the condition that $|Y| \le q^m$. The number of $X/Y$ with $|Y| \le q^m$ in the interval $\mathcal I$ is $q^{2m-h} + O(1)$; since $m \ge h/2$, this is $\ll q^{2m-h}$. Thus the sum is
\begin{align*}
&\ll_{n,\ep} \sum_{m=h/2}^{2h} q^{m(\ep/3)}\frac{(\log \log (q^m))^2}{q^{2m}} q^{2m-h} \ll q^{-h} \sum_{m=h/2}^{2h} q^{m(2\ep/3)} \ll q^{-h(1 -\ep)},
\end{align*}
as desired. 
\end{proof}

We now turn to bounding the contribution to the fifth moment $m_5(Q;h)$ coming from tuples $(R_1, \dots, R_5)$ satisfying the conclusions of Lemma \ref{lem:notmvtuples}.

\begin{proposition}\label{prop:fifthmomentmachine}
Fix $h \ge 1$ and let $Q \in \F_q[t]$ be squarefree. Let $\mathcal S$ be the set of tuples $(R_1, \dots, R_5)$ such that
\begin{itemize}
	\item $R_i|Q$ for all $i$,
	\item $q^h \le |R_i| \le q^{2h}$ for all $i$,
	\item $\left|\frac{R_2}{(R_1,R_2)}\right| \ge q^{h/2}$, and
	\item $\left|\frac{R_3}{(R_3,R_1R_2)}\right| \ge q^{h/2}$.
\end{itemize}
Then for all $\ep > 0$, 
\[\sum_{\substack{(R_1, \dots, R_5) \in \mathcal S}} \prod_{i=1}^5 \frac{1}{\phi(R_i)} \sum_{\substack{A_i \in \mathcal R(R_i) \\ |A_i/R_i|<q^{-h} \\ \sum A_i/R_i = 0  \\ 1 \le i \le 5}} q^{5h} \ll q^{(2+\ep)h}\frac{|Q|}{\phi(Q)}.\]
\end{proposition}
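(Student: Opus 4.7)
The plan is to apply Lemma \ref{lem:intervalbound} after a careful Chinese Remainder Theorem (CRT) decomposition, exploiting the two coprime factors $R_2' := R_2/(R_1,R_2)$ and $R_3' := R_3/(R_1R_2, R_3)$ provided by the hypothesis on $\mathcal{S}$. Together with $R_1$, these are pairwise coprime, and by hypothesis $|R_2'|, |R_3'| \ge q^{h/2}$, which are precisely the lower bounds required by the lemma.

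First I would write $R_2 = V_2 R_2'$ with $V_2 = (R_1,R_2)$, and $R_3 = V_3^{(1)} V_3^{(2)} R_3'$, where $V_3^{(1)} = (R_1, R_3)$ divides $R_1$ and $V_3^{(2)}$ is the part of $(R_1R_2, R_3)$ dividing $R_2'$. By CRT, decompose the numerators as $A_2 \leftrightarrow (\alpha_2 \bmod V_2, \beta_2 \bmod R_2')$ and $A_3 \leftrightarrow (\gamma_3^{(1)} \bmod V_3^{(1)}, \gamma_3^{(2)} \bmod V_3^{(2)}, \delta_3 \bmod R_3')$, with each numerator coprime to its denominator. Combining terms with common denominators, the constraint $\sum_i A_i/R_i \equiv 0 \pmod 1$ becomes
\[\frac{\tilde A_1}{R_1} + \frac{\tilde A_2}{R_2'} + \frac{\delta_3}{R_3'} + \frac{A_4}{R_4} + \frac{A_5}{R_5} \equiv 0 \pmod 1,\]
where $\tilde A_1$ absorbs the $V_2$- and $V_3^{(1)}$-parts into $R_1$, and $\tilde A_2 = \beta_2 + (\gamma_3^{(2)} R_2'/V_3^{(2)}) \bmod R_2'$.

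The key step is to fix the outer data (everything except $R_2', R_3', \tilde A_2, \delta_3$) and apply Lemma \ref{lem:intervalbound} with $n=2$, $Y_1 = R_2', Y_2 = R_3'$, to the inner sum. The constraint forces $\tilde A_2/R_2' + \delta_3/R_3' = Y$ for a fixed rational $Y$ determined by the outer data, and $Y$ lies in an interval of size $q^{-h}$ by ultrametric applied to the original bounds $|A_i/R_i| < q^{-h}$. Lemma \ref{lem:intervalbound} then contributes a factor $\ll q^{-h+\epsilon h}$. The upper bound $|R_2' R_3'| \le q^{2h}$ in the hypothesis of that lemma is not always met, but the proof of the lemma extends verbatim to any polynomial-in-$h$ upper bound since $q^{m/\log m} = q^{o(h)}$ remains valid on $m \le Mh$; a dyadic split in $|R_2'|, |R_3'|$ then completes this step.

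Finally, I sum the resulting bound over the outer variables $R_1, R_4, R_5, V_2, V_3^{(1)}, V_3^{(2)}$ and the associated numerators. The outer sum over squarefree divisors of $Q$ with the weights $1/\phi$ yields multiplicative Euler-product factors which evaluate to powers of $|Q|/\phi(Q)$; carefully matched with the $q^{5h}$ prefactor and the $\ll q^{-h+\epsilon h}$ savings from the lemma, together with the $q^h$ contribution from the $|A_i/R_i| < q^{-h}$ intervals for each of $i = 1, 4, 5$, one obtains the target bound $\ll q^{2h+\epsilon}|Q|/\phi(Q)$. The main obstacle is reconciling the weight $\prod 1/\phi(R_i)$ in our sum with the weight $\prod 1/\phi(Y_i)^2$ built into Lemma \ref{lem:intervalbound}: this is handled by peeling off the $V_2$- and $V_3$-pieces into the outer sum (where they each gain one power of $1/\phi$), so that on the inner sum the effective weight $1/(\phi(R_2')\phi(R_3'))$ can be paired against the lemma's $1/(\phi(R_2')\phi(R_3'))^2$ via the dyadic size control on $R_2', R_3'$, incurring only a $q^{\epsilon h}$ loss.
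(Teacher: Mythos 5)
Your proposal takes a genuinely different route from the paper, but it has a fundamental accounting gap that I do not see how to repair without essentially reverting to the paper's strategy.

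The paper's proof decomposes every $R_i$ into the pairwise-coprime pieces $R_I$ indexed by \emph{all} subsets $I \subseteq [5]$ with $|I| \ge 2$, and then applies Lemma~\ref{lem:intervalbound} \emph{three} times, at the levels $\ell_I = 3, 2, 1$ with $n = 3, 7, 15$ respectively, each application contributing a factor $\ll q^{-h + \ep h}$. That gives the full $q^{-3h}$ saving needed to pass from the $q^{5h}$ prefactor to $q^{2h}$. Crucially, because any prime dividing $\prod_I R_I$ must divide at least two of the $R_i$, each piece $R_I$ carries weight $\phi(R_I)^{-|I|}$ with $|I|\ge 2$; after the intermediate numerators $A_{i,I}$ with $\ell_I < i < m_I$ are bounded trivially, every $R_I$ is still weighted by $\phi(R_I)^{-2}$, which is exactly what Lemma~\ref{lem:intervalbound} requires.

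In your proposal, the inner sum runs over $R_2'$ and $R_3'$, and the weight you correctly identify is $1/\bigl(\phi(R_2')\phi(R_3')\bigr)$ --- a \emph{single} power of $\phi$ on each. This is not a cosmetic mismatch with the lemma's $1/\phi(Y_i)^2$. With the single-power weight, for $|R_2'R_3'| = q^m$ and an interval of size $q^{-h}$, each pair $(R_2',R_3')$ still contributes $\approx q^{m-h}/\phi(R_2'R_3') \approx q^{-h}$, and there are on the order of $q^{m(1+\ep)}$ squarefree choices of degree $m$ up to $4h$, so the single-power inner sum is of size $q^{3h+\ep}$, not $q^{-h+\ep}$. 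Your claimed fix --- ``paired against the lemma's $1/\phi^2$ via dyadic size control, incurring only $q^{\ep h}$ loss'' --- would actually incur a loss of order $\phi(R_2')\phi(R_3') \gg q^h$. The missing second power of $\phi$ does not live in $V_2$ or the $V_3$ pieces, which you have already shipped to the outer sum; it lives distributed among the divisors of $R_4, R_5$, and the $R_{3}'$-pieces, and the only way to recover it cleanly is precisely the fine $R_I$ decomposition.

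Separately, applying Lemma~\ref{lem:intervalbound} only once (with $n = 2$) yields a single $q^{-h}$ saving against the $q^{5h}$ prefactor. You gesture at recovering additional $q^{-h}$ factors from the intervals $|A_i/R_i| < q^{-h}$ for $i=1,4,5$, but these are not independent savings once you discard the $\sum A_i/R_i = 0$ constraint in passing to the outer sum, and the outer sum over $R_1, R_4, R_5$ with weight $1/\phi(R_i)$ and a one-sided interval constraint does not converge to something of size $q^{-h}$ each; the counts of squarefree $R_i|Q$ up to degree $2h$ swamp these savings. In short, you need all three ``level'' interval constraints from the paper's decomposition to be used via Lemma~\ref{lem:intervalbound}, and you also need the $\phi^{-2}$ weight at each level; both are consequences of the full $R_I$ decomposition, which your partial CRT splitting does not recover. (A minor remark: you correctly identify that the hypotheses $|R_2/(R_1,R_2)| \ge q^{h/2}$ and $|R_3/(R_3,R_1R_2)| \ge q^{h/2}$ are exactly the lower bounds Lemma~\ref{lem:intervalbound} needs --- this is the right intuition for why those conditions appear in $\mathcal{S}$, and it is the same reason the paper uses them, just applied at levels $2$ and $3$ of the $R_I$ decomposition.)
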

\begin{proof}
We begin by sketching an overview of the strategy. For each subset $I \ins [5]$, let $R_I = \prod_{\substack{P|R_i \forall i \in I \\ P\nmid R_i \forall i \not\in I}} P$ be the product of the irreducible factors dividing $R_i$ if and only if $i \in I$. Note that these $R_I$'s must be pairwise relatively prime. 

We start by using the constraint that $\left|\frac{A_1}{R_1}\right|<q^{-h}$. We will count the total number of rational functions in this interval with denominator of degree at most $2h$. For each option of $\frac{A_1}{R_1}$, we can decompose $R_1 = \prod_{I\ni 1} R_I$, so the number of ways to decompose $R_1$ into these $R_I$ factors is $\tau_{2^{k-1}-1}(R_1)$, which we can bound based on the degree of $R_1$. We then also get $\frac{A_1}{R_1} = \sum_{I \ni 1} \frac{A_I}{R_I}$, where the $A_I$'s are determined by the Chinese Remainder Theorem. 

We will then focus on the constraint that $\left|\frac{A_2}{R_2}\right| < q^{-h}$. However, $(R_1,R_2) = \prod_{1,2 \in I} R_I$ has already been fixed, so the same analysis as used for $R_1$ applies to the remaining factors of $R_2$. Crucially, $\frac{R_2}{(R_1,R_2)}$ remains relatively large by assumption, which will ensure that we save enough by doing this. Finally, the constraint on $\frac{A_3}{R_3}$, using our assumption that $\frac{R_3}{(R_3,R_1R_2)}$ is large enough, yields savings in the same way.

We begin by rewriting our sum in terms of the $R_I$. For each subset $I \ins [5]$, and for a fixed $R_1,\dots,R_5$, we again define $R_I$ to be the product of all primes $P$ so that $P$ divides $R_i$ for each $i \in I$ \emph{and} $P$ does \emph{not} divide $R_j$ for all $j \not\in I$. The $R_I$ are a system of \emph{relative greatest common divisors}; see \cite{MR4091066} for details. For example, $R_{\{1,2\}}$ is the product of all primes dividing $R_1$ and $R_2$, but $(R_{\{1,2\}}, R_j) = 1$ for $j = 3,4,5$. The polynomials $R_I$ must satisfy the following properties, implied by the constraints on the $R_i$'s:
\begin{itemize}
	\item Each $R_I$ divides $Q$, and for each $I \ne J \ins [5]$, $(R_I, R_J) = 1$.
	\item Each irreducible polynomial dividing an $R_i$ must divide at least two of them in order for the sum over $A_i$ to be nonempty, so $R_I = 1$ unless $|I| \ge 2$. We will always assume that $|I| \ge 2$.
	\item Each choice of $A_i$ is equivalent to a choice of $A_{i,I}$ for all subsets $I$ containing $i$, that is, $\frac{A_i}{R_i} = \sum_{I \ni i} \frac{A_{i,I}}{R_I}.$
	\item The quantity $(A_i,R_i) = 1$ for all $i$ if and only if $(A_{i,I},R_I) = 1$ for all $I,i$.
	\item The constraint that for all $i$, $|A_i/R_i|<q^{-h}$, implies that for each index $i$,
	\[\left|\sum_{I \ni i} \frac{A_{i,I}}{R_I} \right| < q^{-h}. \]
	\item The constraint that $\sum_{i=1}^5 A_i/R_i = 0$ implies that for each subset $I$,
	\[\sum_{i \in I} A_{i,I} = 0.\]
Finally, define $\ell_I$ to be the minimum element of a subset $I \ins [5]$. The requirement that $(R_1, \dots, R_5) \in \mathcal S$ implies the following:
	\item For all $i$, 
	\[q^h \le \left|\prod_{I \ni i} R_I \right| \le q^{2h}.\]
	\item Since $\frac{R_2}{(R_1,R_2)} = \prod_{\ell_I = 2} R_I$, and $\frac{R_3}{(R_1,R_2,R_3)} = \prod_{\ell_I = 3} R_I$,
	\[\left|\prod_{\ell_I = 2} R_I \right| \ge q^{h/2} \quad \text{ and } \quad \left|\prod_{\ell_I = 3} R_I \right| \ge q^{h/2}. \]
\end{itemize}
The sum under consideration is then 
\[\ll q^{5h} \sum_{\substack{R_I|Q \\ I \ins [5] \\ q^h \le \left|\prod_{I \ni i} R_I \right| \le q^{2h} \\ \left|\prod_{\ell_I = 2} R_I \right| \ge q^{h/2} \\ \left|\prod_{\ell_I = 3} R_I \right| \ge q^{h/2}}} \frac{\mu\left(\prod_I R_I\right)^2}{\prod_I \phi(R_I)^{|I|}} \sum_{\substack{I, i \in I \\ A_{i,I} \in \mathcal R(R_I) \\ \forall i, \left| \sum_{I \ni i} A_{i,I}/R_I \right| < q^{-h} \\ \forall I, \sum_{i \in I} A_{i,I} = 0}} 1.\]

Note first that if $m_i$ is the maximum element of a subset $I$, then $A_{m_i,I}$ is fully determined by the other $A_{i,I}$ and the fact that $\sum_{i \in I} A_{i,I} = 0$. Then for $i \in I$ with $\ell_I < i < m_I$, we will use the trivial bound on the number of options for $A_{i,I}$; namely that there are at most $R_I$ choices for $A_{i,I}$. For the rest of this bound, we treat $A_{i,I}$ as fixed when $\ell_I < i < m_I$. 

We finally consider the number of options for the remaining $A_{\ell_I,I}$, where $\ell_I$ is the smallest element in $I$, which is where the savings in the argument will come from. We will proceed by ordering the intervals $I$ in our sum by $\ell_I$; we will first sum over options for $A_I$ when $I = \{4,5\}$, with $\ell_I = 4$, and then over $A_{i,I}$ for all $I$ with $\ell_I = 3$, and so on. As we do this, we will need at each step to satisfy the constraints that for each $i$,
\begin{equation}\label{eq:foralliAiIinterval}
\left| \sum_{I \ni i} \frac{A_{i,I}}{R_I} \right| < q^{-h},
\end{equation}
where as we split up the sums over different $A_{i,I}$'s, some of the values in this sum will be fixed and others will still be free to vary in our sum. But even if some of the terms in the sum above are fixed, the remaining terms are still constrained to lie in some interval of size $q^{-h}$, possibly an interval centered at a non-zero rational function. In particular, the constraints in \eqref{eq:foralliAiIinterval} are equivalent to the constraints that for all $i$,
\begin{equation*}
\Big| F_i + \sum_{\substack{I \subseteq [5] \\ \ell_I = i}} \frac{A_{i,I}}{R_I}\Big| < q^{-h},
\end{equation*}
where $F_i$ is a fixed rational function determined by the values of $A_{i,I}$ when $\ell_I < i < m_I$. The bounds we use are independent $F_i$, only requiring that the size of the interval is $q^{-h}$, so we can replace $F_i$ by $0$. This yields the following sum.
\begin{equation}\label{eq:sumjustoverellis}
\ll q^{5h} \sum_{\substack{R_I|Q \\ I \ins [5] \\ q^h \le \left|\prod_{I \ni i} R_I \right| \le q^{2h} \\ \left|\prod_{\ell_I = 2} R_I \right| \ge q^{h/2} \\ \left|\prod_{\ell_I = 3} R_I \right| \ge q^{h/2}}} \frac{\mu\left(\prod_I R_I \right)^2}{\prod_I \phi(R_I)^{|I|}} \prod_I \phi(R_I)^{|I|-2} \sum_{\substack{A_{\ell_I,I} \in \mathcal R(R_I) \\ I \ins [5] \\ \forall i, \left|\sum_{\ell_J = i} A_{i,J}/R_J \right|<q^{-h}}}1.
\end{equation}

The only terms $A_{i,I}$ that remain in \eqref{eq:sumjustoverellis} are of the form $A_{\ell_I,I}$, there is only one term for each subset $I$, so to simplify our notation we will write $A_I := A_{\ell_I,I}$ from now on. 

Consider subsets $I$ with $\ell_I = 4$. There is only one of these, namely $\{4,5\}$, so we rewrite the sum as follows:
\[\ll q^{5h} \sum_{\substack{R_I|Q \\ I \ins [5], I \ne \{4,5\} \\ q^h \le \left|\prod_{I \ni i} R_I \right| \le q^{2h} \\ \left|\prod_{\ell_I = 2} R_I \right| \ge q^{h/2} \\ \left|\prod_{\ell_I = 3} R_I \right| \ge q^{h/2}}} 
\frac{\mu\left(\prod_I R_I \right)^2}{\prod_I \phi(R_I)^2} \sum_{\substack{A_{I} \in \mathcal R(R_I)\\ I \ins [5], I \ne \{4,5\} \\ \forall i, \left|\sum_{\ell_J = i} A_{J}/R_J \right|<q^{-h}}} 
\sum_{\substack{R_{\{4,5\}} | Q \\ A_{\{4,5\}} \in \mathcal R(R_{\{4,5\}})}} \frac 1{\phi(R_{\{4,5\}})^2}. 
\]
In the inside sum, we have dropped the additional constraint that $\frac{A_{\{4,5\}}}{R_{\{4,5\}}}$ must lie in an interval of size $q^{-h}$, since ignoring it only increases the size of the sum. For each $R_{\{4,5\}}$, there are $\phi(R_{\{4,5\}})$ choices of $A_{\{4,5\}}$, so the inner sum becomes
\[\sum_{\substack{R_{\{4,5\}} | Q}} \frac 1{\phi(R_{\{4,5\}})} = \frac{|Q|}{\phi(Q)}, \]
since $Q$ is squarefree. 

Now consider subsets $I$ with $\ell_I = 3$, i.e. $\{3,4\}$, $\{3,4,5\}$, and $\{3,5\}$. We first bookkeep by isolating these terms in the sum, yielding
\[\ll q^{5h} \frac{|Q|}{\phi(Q)} \sum_{\substack{R_I|Q \\ I \ins [5], \ell_I < 3 \\ q^h \le \left|\prod_{\ell_I = 1} R_I \right| \le q^{2h} \\ q^{h/2} \le \left|\prod_{\substack{\ell_I = 2}} R_I \right| \le q^{2h}}} 
\frac{\mu\left(\prod_I R_I \right)^2}{\prod_I \phi(R_I)^2} 
\sum_{\substack{A_I \in \mathcal R(R_I)\\ I \ins [5], \ell_I < 3 \\ \forall i, \left|\sum_{\ell_J = i} A_J/R_J \right|<q^{-h}}}
\sum_{\substack{\ell_I = 3 \\ R_I|Q \\ A_I \in \mathcal R(R_I) \\ q^{h/2} \le \left|\prod_{\substack{\ell_I = 3}} R_I \right| \le q^{2h} \\ \left|\sum_{\ell_I = 3} A_I/R_I\right|<q^{-h}}} \frac {\mu\left(\prod_{\ell_I = 3} R_I\right)^2}{\prod_{\ell_I = 3} \phi(R_I)^2}.\]

We now bound the inner sum using Lemma \ref{lem:intervalbound}. The inner sum comprises three terms $R_I$, so apply the lemma with $n = 3$, to get that the inner sum is $\ll q^{-h(1+\ep)}$. 

We repeat the process, now considering subsets $I$ with $\ell_I = 2$. Isolating these terms yields
\[\ll q^{4h+\ep h} \frac{|Q|}{\phi(Q)} \sum_{\substack{R_I|Q \\ I \ins [5], \ell_I =1 \\ q^h \le \left|\prod_{\ell_I = 1} R_I \right| \le q^{2h}}} 
\frac{\mu\left(\prod_{\ell_I=1} R_I \right)^2}{\prod_{\ell_I=1} \phi(R_I)^2} 
\sum_{\substack{A_I \in \mathcal R(R_I)\\ I \ins [5], \ell_I =1 \\ \left|\sum_{\ell_I = 1} A_I/R_I \right|<q^{-h}}}
\sum_{\substack{\ell_I = 2 \\ R_I|Q \\ A_I \in \mathcal R(R_I) \\ q^{h/2} \le \left|\prod_{\substack{\ell_I = 2}} R_I \right| \le q^{2h} \\ \left|\sum_{\ell_I = 2} A_I/R_I\right|<q^{-h}}} \frac {\mu\left(\prod_{\ell_I = 2} R_I\right)^2}{\prod_{\ell_I = 2} \phi(R_I)^2}.\]
Here there are seven $R_I$ terms and seven $A_I$ terms in the inner sum, so, again applying Lemma \ref{lem:intervalbound}, the inner sum is $\ll q^{-h+\ep h}$. Lastly, we address the terms with $\ell_I = 1$:
\[\ll q^{3h} q^{2\ep h} \frac{|Q|}{\phi(Q)} \sum_{\substack{R_I|Q \\ I \ins [5], \ell_I =1 \\ q^h \le \left|\prod_{\ell_I = 1} R_I \right| \le q^{2h}}} 
\frac{\mu\left(\prod_{\ell_I=1} R_I \right)^2}{\prod_{\ell_I=1} \phi(R_I)^2} 
\sum_{\substack{A_I \in \mathcal R(R_I)\\ I \ins [5], \ell_I =1 \\ \left|\sum_{\ell_I = 1} A_I/R_I \right|<q^{-h}}} 1.\]
We apply Lemma \ref{lem:intervalbound} one final time, this time with $n = 15$, since there are $15$ sets $I \ins [5]$ with $|I| \ge 2$ and $\ell_I = 1$. This yields
\[\ll q^{2h+3\ep h}\frac{|Q|}{\phi(Q)},\]
as desired.
\end{proof}

We are now ready to prove a general bound on $m_5(Q;h)$.
\begin{theorem}
Fix $\ep > 0$ and let $Q \in \F_q[t]$ be squarefree. Define $m_5(Q;h)$ by \eqref{eq:defn-of-mk-in-fcn-field}. Then
\[m_5(Q;h) \ll |Q|q^{2h + \ep}\left(\frac{|Q|}{\phi(Q)}\right)^{-4} + |Q|q^{2h}\left(\frac{|Q|}{\phi(Q)}\right)^{27}.\]
\end{theorem}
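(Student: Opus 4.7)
The plan is to split the sum for $V_5(Q;h)$ appearing in Lemma \ref{lem:mvl2} into a \emph{tractable} piece, handled by Lemma \ref{lem:mvl7} with a carefully chosen $(R_1, R_2)$, and a \emph{residual} piece, handled by Proposition \ref{prop:fifthmomentmachine} via the structural input of Lemma \ref{lem:notmvtuples}. Following the classification in the paragraph preceding Lemma \ref{lem:notmvtuples}, I call a tuple $(R_1,\dots,R_5)$ \emph{tractable} when at least one of three scenarios holds: some $|R_i|<q^h$; some value occurs three or more times among the $R_i$; or there is a pair $R_i \ne R_j$ with either $|R_i/(R_i,R_j)|<q^h$ or $|(R_i,R_j)|\ge q^{h/2}$. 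All other tuples are \emph{residual}.

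For tractable tuples, the choice of $(R_1, R_2)$ prescribed in each scenario makes every one of $X_1, X_2, X_3$ in Lemma \ref{lem:mvl7} either zero or $\ll q^{-h/2}$. Substituting these bounds into $V_5(Q;h)$ and collapsing the resulting Euler product exactly as in the proof of Lemma \ref{lem:mvl8} with $k=5$, the tractable contribution to $V_5(Q;h)$ is $\ll q^{2h}(|Q|/\phi(Q))^{32}$; multiplication by the $(\phi(Q)/|Q|)^5$ prefactor from Lemma \ref{lem:mvl2} then produces the second claimed term $|Q|q^{2h}(|Q|/\phi(Q))^{27}$.

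For residual tuples, Lemma \ref{lem:notmvtuples}---applied after a relabeling so that $R_1, R_2, R_3$ are pairwise distinct, which is possible precisely because no value occurs three times---certifies the inequalities $|R_2/(R_1,R_2)|\ge q^{h/2}$ and $|R_3/(R_3,R_1R_2)|\ge q^{h/2}$ that place the tuple into the set $\mathcal S$ of Proposition \ref{prop:fifthmomentmachine}. The remaining conditions $|R_i|\ge q^h$ are forced by the support statement in Lemma \ref{lem:mvl4}. Bounding $|E(A_i/R_i)|\le q^h$ trivially, the residual contribution to $V_5(Q;h)$ is majorized by the sum appearing in Proposition \ref{prop:fifthmomentmachine} and is $\ll q^{2h+\ep}|Q|/\phi(Q)$; multiplication by the prefactor gives the first claimed term $|Q|q^{2h+\ep}(|Q|/\phi(Q))^{-4}$.

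The main technical subtlety is the upper bound $|R_i|\le q^{2h}$ built into $\mathcal S$, which is not automatic for divisors of a general squarefree $Q$. I expect to handle this by observing that when some $|R_i|$ is excessively large, the support constraint $|A_i/R_i|<q^{-h}$ coming from Lemma \ref{lem:mvl4} is so restrictive that either it forces one of the tractable alternatives (by making some $|(R_i,R_j)|$ large for a different pair) or allows a direct count absorbable into the same final estimate. Summing the tractable and residual contributions then completes the proof.
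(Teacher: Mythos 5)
Your decomposition into tractable and residual tuples, the bound $\ll q^{2h}\bigl(|Q|/\phi(Q)\bigr)^{32}$ on the tractable contribution to $V_5(Q;h)$ via Lemma~\ref{lem:mvl7} and Lemma~\ref{lem:mvl8}, and the use of Lemma~\ref{lem:notmvtuples} plus Proposition~\ref{prop:fifthmomentmachine} on the residual contribution all match the paper's argument. But the one point you flag as the ``main technical subtlety''---the upper bound $|R_i|\le q^{2h}$ needed to place residual tuples into the set $\mathcal{S}$---is a genuine gap in your proposal, and the mechanism you sketch for closing it would not work.

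The support constraint $|A_i/R_i|<q^{-h}$ from Lemma~\ref{lem:mvl4} becomes \emph{less} restrictive as $|R_i|$ grows (it only says $|A_i|<|R_i|q^{-h}$, a wider range for larger $|R_i|$), so it does not by itself force any $|(R_i,R_j)|$ to be large or push the tuple into the tractable pile. The fact the paper actually uses is that for the inner $A_i$-sum to be nonempty every irreducible dividing some $R_i$ must divide at least two of them, which yields $R_1\mid\prod_{j\ne 1}(R_1,R_j)$, a product of four factors. If $|R_1|\ge q^{2h}$, pigeonhole gives some $|(R_1,R_j)|\ge q^{h/2}$; the residual hypothesis then forces $R_j=R_1$, so the tuple is $(R_1,R_1,R_3,R_4,R_5)$ with no further equal pair (ruling one out by applying the same divisibility-plus-pigeonhole argument to the leftover component). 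One then deduces $R_3/(R_3,R_1)\mid(R_3,R_4)(R_3,R_5)$, but the residual hypothesis makes the left side have norm $\ge q^h$ while the right has norm $<q^{h/2}\cdot q^{h/2}=q^h$---a contradiction. This purely structural divisibility argument, rather than anything about the support of $E$, is what certifies $|R_i|<q^{2h}$ for all residual tuples and so lets Proposition~\ref{prop:fifthmomentmachine} apply.
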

\begin{proof}
Using Lemma \ref{lem:mvl2}, we can express
\[m_5(Q;h) = |Q| \left(\frac{\phi(Q)}{|Q|}\right)^5 V_5(Q;h),\]
where 
\[V_5(Q;h) = \sum_{\substack{R_1, \dots R_5 |Q \\ |R_i| > 1 \\ R_i \text{ monic}}} \prod_{i=1}^5 \frac{\mu(R_i)}{\phi(R_i)} \sum_{\substack{A_1, \dots, A_5 \in \mathcal R(R_i) \\ \sum_i A_i/R_i = 0}} E\left(\frac{A_1}{R_1}\right) \cdots E\left(\frac{A_5}{R_5}\right).\]
Now apply Lemma \ref{lem:mvl7} to bound the contribution to $V_5(Q;h)$ from many tuples $R_1, \dots, R_5$. If $|R_i| < q^h$ for any $i$, then these terms contribute $0$; assume from now on that $|R_i| \ge q^h$. If for any triple $i,j,k$ we apply Lemma \ref{lem:mvl7} with $R_1 = R_i$ and $R_2 = R_j$; in this case $X_2 = 0$ and $X_1$ and $X_3$ are $O(q^{-h/2})$, so these terms contribute $O\left( q^{2h} \left(\frac{|Q|}{\phi(Q)}\right)^{32}\right)$. If there exist $R_i \ne R_j$ such that either $\left|\frac{R_i}{(R_i,R_j)}\right| < q^h$ or $|(R_i,R_j)| \ge q^{h/2}$; in this case, $X_3 =0$, and $X_1$ and $X_2$ are each $O(q^{-h/2})$, so these terms contribute $O\left( q^{2h} \left(\frac{|Q|}{\phi(Q)}\right)^{32}\right)$ as well.

Assume now that $(R_1, R_2, R_3, R_4, R_5)$ does not fall into either of the above cases. Then for all $i$, $|R_i| < q^{2h}$. To see this, assume that $(R_1,R_2,R_3,R_4,R_5)$ has no $i,j,k$ with $R_i = R_j = R_k$, and that for all $R_i \ne R_j$, $\left|\frac{R_i}{(R_i,R_j)}\right| \ge q^h$ and $|(R_i,R_j)| < q^{h/2}$. Assume, relabeling if necessary, that $R_1 \ge q^{2h}$. Since $R_1|\prod_{j \ne 1} (R_1,R_j)$, we must have $|(R_1,R_j)|\ge q^{h/2}$ for some $j \ne 1$. This cannot be true for some $j$ with $R_j \ne R_1$, so we have $R_j = R_1$. At the same time, there can only be one $j \ne 1$ with $R_j = R_1$, so without loss of generality our tuple must be of the form $(R_1, R_1, R_3, R_4,R_5)$. There cannot be an additional equal pair among $R_3, R_4,$ and $R_5$; if there is (without loss of generality $R_3 = R_4$), then $R_5|(R_1,R_5)(R_3,R_5)$, so since $|R_5| \ge q^h$ either $|(R_1,R_5)| \ge q^{h/2}$ or $|(R_3,R_5)|\ge q^{h/2}$, which along with the lack of equal triples yields a contradiction. Now consider $R_3$. Note that $R_3|(R_1,R_3)(R_4,R_3)(R_5,R_3)$, and that $\frac{R_3}{(R_1,R_3)}|(R_4,R_3)(R_5,R_3)$. But by assumption, $\left|\frac{R_3}{(R_1,R_3)}\right| \ge q^h$ and $|(R_4,R_3)(R_5,R_3)| < (q^{h/2})^2 = q^h$, which yields a contradiction.

So, the only terms remaining are those with $|R_i| < q^{2h}$ for all $i$, no equal triple, and either $\left|\frac{R_i}{(R_i,R_j)}\right| < q^h$ or $|(R_i,R_j)|\ge q^{h/2}$ whenever $R_i \ne R_j$. By Lemma \ref{lem:notmvtuples}, $(R_1, \dots, R_5)$ satisfies the constraints of Proposition \ref{prop:fifthmomentmachine}. By Proposition \ref{prop:fifthmomentmachine}, these terms contribute $O\left(q^{(2+\ep)h}\frac{|Q|}{\phi(Q)}\right)$ to $V_5(Q;h)$ for all $\ep > 0$. Thus for all $\ep > 0$,
\[V_5(Q;h) \ll q^{(2+\ep)h}\frac{|Q|}{\phi(Q)} + q^{2h} \left(\frac{|Q|}{\phi(Q)}\right)^{32},\]
so $m_5(Q;h) \ll |Q|q^{(2+\ep)h}\left(\frac{|Q|}{\phi(Q)}\right)^{-4} + |Q|q^{2h}\left(\frac{|Q|}{\phi(Q)}\right)^{27}$.
\end{proof}
As in the integer case, we particularly want to consider $Q$ to be the product of irreducible polynomials $P$ with $|P| \le q^{2h}$. In this case, $\frac{|Q|}{\phi(Q)} \ll h$, so that we get the following corollary.
\begin{corollary}\label{cor:m5Qh-bound-for-primorial-Q}
Fix $\ep > 0$ and let $Q \in \F_q[t]$ be given by $Q = \prod_{\substack{P \text{irred.} \\ |P| \le q^{2h}}} P$. Then
\[m_5(Q;h) \ll_\ep |Q| q^{(2+\ep)h}.\] 
\end{corollary}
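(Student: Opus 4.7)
The corollary follows almost immediately from the preceding theorem by estimating $|Q|/\phi(Q)$ for this specific choice of $Q$. The plan is to invoke the function-field Mertens theorem to control this ratio and then check that both terms in the theorem's bound are absorbed into $|Q|q^{2h+\ep}$.

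First I would recall that, since $Q$ is squarefree and monic,
\[
\frac{|Q|}{\phi(Q)} = \prod_{P \mid Q}\left(1-\frac{1}{|P|}\right)^{-1} = \prod_{\substack{P \text{ irred.}\\ |P|\le q^{2h}}} \left(1-\frac{1}{|P|}\right)^{-1}.
\]
The function-field analogue of Mertens' third theorem gives
\[
\prod_{\substack{P \text{ irred.}\\ \deg P \le n}} \left(1-\frac{1}{|P|}\right)^{-1} \ll n,
\]
(with an explicit constant $e^{\gamma}\log q$ in the asymptotic, but we only need the order of magnitude). Applied with $n=2h$, this yields $|Q|/\phi(Q) \ll h$.

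Next I would plug this into the bound from the preceding theorem. For any $\ep'>0$,
\[
m_5(Q;h) \ll |Q|q^{2h+\ep'}\left(\frac{|Q|}{\phi(Q)}\right)^{-4} + |Q|q^{2h}\left(\frac{|Q|}{\phi(Q)}\right)^{27} \ll |Q|q^{2h+\ep'}h^{-4} + |Q|q^{2h}h^{27}.
\]
The first term is trivially $\ll |Q|q^{2h+\ep'}$. For the second term, since $h^{27}$ grows only polynomially in $h$ while $q^{\ep h}$ grows exponentially, for any $\ep>0$ we have $h^{27} \ll q^{\ep h}$ once $h$ is large enough. Choosing $\ep'=\ep/2$ and combining both contributions gives the desired $m_5(Q;h) \ll |Q|q^{2h+\ep}$.

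No real obstacle arises here: the work is entirely in the preceding theorem, and the only quantitative input needed is the standard Mertens-type estimate in $\F_q[T]$. The one thing to be careful about is the implicit dependence on $\ep$ versus $\ep'$ when passing from the theorem's bound to the corollary — but since $\ep$ is arbitrary, taking $\ep'$ smaller than $\ep$ at the outset makes this transparent.
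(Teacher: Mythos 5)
Your proof is correct and takes essentially the same approach as the paper, which simply notes $|Q|/\phi(Q) \ll h$ and plugs this into the preceding theorem's bound; your extra care with $\ep'$ versus $\ep$ and the polynomial-vs.-exponential comparison spells out exactly the (routine) absorption the paper leaves implicit.
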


\subsection{Proof of Corollary \ref{cor:singseriesff}: Bounds on $R_k(q^h)$}
In this subsection, we discuss the transition from bounds on $V_k(Q;h)$, from Theorem \ref{thm:funcfieldfifth} and Lemma \ref{lem:mvl8}, to bounds on sums of singular series in function fields, in order to prove Corollary \ref{cor:singseriesff}. Much of this is similar to the integer case discussion in Section \ref{sec:threeterminteger}.

As in the integer case, for $\mathcal D = \{D_1, \dots, D_k\}$ a set of distinct polynomials in $\mathbb F_q[T]$, we define the singular series
\[
\mathfrak S(\mathcal D) := \prod_{P \text{ monic, irred.}} \frac{(1-\nu_P(\mathcal D)/|P|)}{(1-1/|P|)^k}
= \sum_{\substack{R_1, \dots, R_k \\ 1 \le |R_i|}} \left(\prod_{i=1}^k \frac{\mu(R_i)}{\phi(R_i)} \right) \sum_{\substack{A_1, \dots, A_k \\ A_i \in \mathcal R(R_i) \\ \sum_i A_i/R_i = 0}} e \left(\sum_{i=1}^k \frac{A_iD_i}{R_i}\right),
\]
where $\nu_P(\mathcal D)$ is the number of equivalence classes of $\mathbb F_q[T]/(P)$ occupied by elements of $\mathcal D$. We also define $\mathfrak S_0(\mathcal D)$, given by $\mathfrak S_0(\mathcal D) := \sum_{\mathcal J \ins \mathcal D} (-1)^{|\mathcal D \setminus \mathcal J|}\mathfrak S(\mathcal J)$, and consider
\begin{equation}\label{eq:fcn-field-def-of-rk-qh}
R_k(q^h) := \sum_{\substack{D_1, \dots, D_k \\ D_i \text{ distinct} \\ |D_i| \le q^h}} \mathfrak S_0(\{D_1, \dots, D_k\}).
\end{equation}

Our results on $m_k(Q;h)$ (and equivalently $V_k(Q;h)$) imply bounds on these sums of $k$-fold singular series, just as in the integer case in Section \ref{sec:threeterminteger}. We set $Q$ to be the product of all monic irreducible polynomials of degree at most $2h$, so that $\frac{|Q|}{\phi(Q)} \ll_q h$. Just as in the integer case, we can truncate the expression for $\mathfrak S_0(\mathcal D)$ to only contain terms dividing $Q$, with an acceptable error term. In particular, we get
\[R_k(h) = \sum_{\substack{D_1, \dots, D_k \\ D_i \text{ distinct} \\ |D_i| \le q^h}} \sum_{\substack{R_1, \dots, R_k \\ |R_i| > 1 \\ R_i|Q}} \prod_{i=1}^k \frac{\mu(R_i)}{\phi(R_i)} \sum_{\substack{A_1, \dots, A_k \\ A_i \in \mathcal R(R_i) \\ \sum_i A_i/R_i = 0}} e\left(\sum_{i=1}^k \frac{D_iA_i}{R_i}\right) + O(1).\]

It will again be helpful for us to define the singular series of a $k$-tuple $\mathcal D = (D_1, \dots, D_k)$ relative to the modulus $Q$. Here the $k$-tuple can have repeated elements; since the Euler product is finite, convergence is not a concern. We define
\[\mathfrak S(\mathcal D;Q) := \prod_{\substack{P|Q \\ P \text{ monic}}} \frac{(1-\nu_P(\mathcal D)/|P|)}{(1 - 1/|P|)^{k}} = \sum_{\substack{R_1, \dots, R_k |Q \\ R_i \text{ monic}}} \left(\prod_{i=1}^k \frac{\mu(R_i)}{\phi(R_i)} \right) \sum_{\substack{A_1, \dots, A_k \\ A_i \in \mathcal R(R_i) \\ \sum_i A_i/R_i = 0}} e\left(\sum_{i=1}^k \frac{A_iD_i}{R_i}\right).\]
If $\mathcal D$ has a repeated element, so that $\mathcal D = \{D, D, D_3, \dots, D_k\}$, then $\mathfrak S(\mathcal D;Q) = \frac{|Q|}{\phi(Q)} \mathfrak S(\{D,D_3,\dots, D_k\};Q)$, so we can remove repeated elements from $\mathcal D$ at the expense of a factor of $\frac{|Q|}{\phi(Q)}$. We define $\mathfrak S_0(\mathcal D;Q)$ to be the alternating sum $\sum_{\mathcal J \subset \mathcal D} (-1)^{|\mathcal D\setminus \mathcal J|} \mathfrak S(\mathcal J;Q),$
so we have
\[R_k(q^h) = \sum_{\substack{D_1, \dots, D_k \\ D_i \text{ distinct} \\ |D_i| \le q^h}} \mathfrak S_0(\{D_1, \dots, D_k\};Q) + O(1).\]
This is quite close to the quantity $V_k(Q;h)$, except with the added constraint that the $D_i$'s must be distinct. It suffices to remove this condition. To do so, we put $\delta_{ij} = 1$ if $D_i = D_j$ and $0$ otherwise, so that
\[\sum_{\substack{D_1, \dots, D_k \\ D_i \text{ distinct} \\ |D_i| \le q^h}} \mathfrak S_0(\{D_1, \dots, D_k\};Q) = \sum_{\substack{D_1, \dots,D_k \\ |D_i| \le q^h}} \left(\prod_{1 \le i < j \le k} (1-\d_{ij})\right) \mathfrak S_0(\{D_1, \dots, D_k\};Q). \]
We can expand the product and group terms according to which $D_i$'s are required to be equal, noting that, for example, $\delta_{12}\delta_{23} = \delta_{13}\delta_{23}$. We can also combine terms according to symmetry; the term $\delta_{12}$ and the term $\delta_{34}$ will have identical contributions in the final sum.

Let us now proceed with analyzing $R_5(q^h)$. After some counting, we get that 
\begin{equation*}
\sum_{\substack{D_1, \dots, D_5 \\ D_i \text{ distinct} \\ |D_i| \le q^h}} \mathfrak S_0(\{D_1, \dots, D_5\};Q) = \sum_{\substack{D_1, \dots,D_5 \\ |D_i| \le q^h}} f((\d_{i,j})_{i,j \in [5]}) \mathfrak S_0(\{D_1, \dots, D_5\};Q),
\end{equation*}
where
\[f((d_{i,j})_{i,j\in[5]}) = 1 - 10 \delta_{12} + 20 \delta_{12}\delta_{13} + 15 \delta_{12}\delta_{34} -20\delta_{12}\delta_{13}\delta_{45} -30\delta_{12}\delta_{13}\delta_{14} + 24\delta_{12}\delta_{13}\delta_{14}\delta_{15}.\]
We will consider the contribution from each term in $f$. The term $1$ gives us precisely $V_5(Q;h)$, which we have already analyzed. We can then bound each of the remaining six terms by expanding $\mathfrak S_0$ into a sum of $\mathfrak S$, removing any repeated terms in the appropriate tuple, and applying Lemma \ref{lem:mvl8} to bound $V_k(Q;h)$ for some $k < 5$. These computations are summarized in the following lemma.

\begin{lemma}
Using the notation of this section,
\begin{enumerate}[(a)]
	\item $\sum_{\substack{D_1, D_3, D_4, D_5 \\ |D_i| \le q^h}} \mathfrak S_0(\{D_1, D_1, D_3, D_4, D_5\};Q) \ll q^{2h} \left(\frac{|Q|}{\phi(Q)}\right)^{9},$
	\item $\sum_{\substack{D_1, D_4, D_5 \\ |D_i| \le q^h}} \mathfrak S_0(\{D_1, D_1, D_1, D_4, D_5\};Q) \ll q^{2h} \left(\frac{|Q|}{\phi(Q)}\right)^{3} + q^h \left(\frac{|Q|}{\phi(Q)}\right)^{10},$
	\item $\sum_{\substack{D_1, D_3, D_5 \\ |D_i| \le q^h}} \mathfrak S_0(\{D_1, D_1, D_3, D_3, D_5\};Q) \ll q^{2h} \left(\frac{|Q|}{\phi(Q)}\right)^{3} + q^h \left(\frac{|Q|}{\phi(Q)}\right)^{10},$
	\item $\sum_{\substack{D_1, D_4 \\ |D_i| \le q^h}} \mathfrak S_0(\{D_1, D_1, D_1, D_4, D_4\};Q) \ll q^{2h} \left(\frac{|Q|}{\phi(Q)}\right)^3 + q^{h} \left(\frac{|Q|}{\phi(Q)}\right)^{4},$
	\item $\sum_{\substack{D_1, D_5 \\ |D_i| \le q^h}} \mathfrak S_0(\{D_1, D_1, D_1, D_1, D_5\};Q) \ll q^h \left(\frac{|Q|}{\phi(Q)}\right)^4,$
	\item $\sum_{\substack{D_1 \\ |D_1| \le q^h}} \mathfrak S_0(\{D_1, D_1, D_1,D_1,D_1\};Q) \ll q^{h} \left(\frac{|Q|}{\phi(Q)}\right)^4.$
\end{enumerate}
\end{lemma}
\begin{proof}
For the sake of brevity we omit most of these computations, which are very similar, but we will show that the term corresponding to $\delta_{12}$, in part (a), is $\ll q^{2h}\left(\frac{|Q|}{\phi(Q)}\right)^{9} $.

Assume we have a tuple $\mathcal D = \{D_1,D_1,D_3, D_4,D_5\}$, with one repeated term. As mentioned above, $\mathfrak S(\mathcal D;Q) = \frac{|Q|}{\phi(Q)}\mathfrak S(\{D_1,D_3,D_4,D_5\};Q)$. Expanding $\mathfrak S_0$ and applying this relation shows that
\[\mathfrak S_0(\mathcal D;Q) = \left(\frac{|Q|}{\phi(Q)}-2\right)\mathfrak S_0(\{D_1, D_3, D_4, D_5\};Q) + \left(\frac{|Q|}{\phi(Q)} - 1\right)\mathfrak S_0(\{D_3, D_4, D_5\};Q),\]
so in this way we can remove repeated elements from our sum. The term we want to bound is
\begin{align*}
&\sum_{\substack{D_1, D_3, D_4, D_5 \\ |D_i| \le q^h}} \mathfrak S_0(\{D_1, D_1, D_3, D_4, D_5\};Q) \\
&= \sum_{\substack{D_1, D_3, D_4, D_5 \\ |D_i| \le q^h}} \left(\frac{|Q|}{\phi(Q)}-2\right)\mathfrak S_0(\{D_1, D_3, D_4, D_5\};Q) + \left(\frac{|Q|}{\phi(Q)} - 1\right)\mathfrak S_0(\{D_3, D_4, D_5\};Q) \\
&= \left(\left(\frac{|Q|}{\phi(Q)}-2\right)V_4(Q;h) + q^h\left(\frac{|Q|}{\phi(Q)}-1\right)V_3(Q;h)\right) \\
&\ll \left(\frac{|Q|}{\phi(Q)}\right)^3q^{2h} + \left(\frac{|Q|}{\phi(Q)}\right)^{9} q^{2h},
\end{align*}
where in the last step the bounds follow from Lemma \ref{lem:mvl8}.
\end{proof}
This lemma gives the following corollary.

\begin{corollary}
Let $Q = \prod_{\substack{P \text{ irred.} \\ |P| \le q^{6h}}} P$. For all $\ep > 0$,
\[R_5(q^h) \ll V_5(Q;h) + q^{2h}\left(\frac{|Q|}{\phi(Q)}\right)^{9} \ll q^{(2+\ep)h}.\]
\end{corollary}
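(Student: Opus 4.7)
The plan is to combine the bookkeeping that precedes the statement with Theorem \ref{thm:funcfieldfifth} and the standard Mertens-type estimate $|Q|/\phi(Q) \ll h$ for our choice $Q = \prod_{|P| \le q^{2h}} P$. The first $\ll$ is handled by the expansion argument already initiated, while the second $\ll$ is a substitution of previously proved bounds.

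For the first inequality, I would complete the expansion of $\prod_{1 \le i < j \le 5}(1-\d_{ij})$ into the seven grouped terms $1 - 10\d_{12} + 20\d_{12}\d_{13} + 15\d_{12}\d_{34} - 20\d_{12}\d_{13}\d_{45} - 30\d_{12}\d_{13}\d_{14} + 24\d_{12}\d_{13}\d_{14}\d_{15}$. The ``$1$'' term contributes exactly $V_5(Q;h) + O(1)$ (the $O(1)$ absorbing the truncation error from restricting the $R_i$'s to divisors of $Q$). Each of the remaining six terms is a sum of $\S_0(\mc D; Q)$ over tuples $\mc D$ with a prescribed coincidence pattern; I would reduce each one using the identity $\S(\mc D; Q) = (|Q|/\phi(Q)) \S(\mc D'; Q)$ (where $\mc D'$ deletes a repeat), expand $\S_0$ into a signed sum of $\S$'s, and re-collect as a linear combination of $V_k(Q;h)$ for $k \le 4$, weighted by powers of $|Q|/\phi(Q)$ and by $q^{h}$ for each free $D_i$.

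The sizes to keep track of: each additional $\d_{ij}$ factor forces one extra coincidence, which removes a free $D_i$ summation and thereby saves a factor of $q^h$, while only introducing a factor of $|Q|/\phi(Q) \ll h$ from the reduction identity — a cost negligible in comparison. Consequently the dominant contribution is the $-10\d_{12}$ term, which as worked in the preceding discussion produces $10(|Q|/\phi(Q) - 2) V_4(Q;h) + 10 q^h (|Q|/\phi(Q) - 1) V_3(Q;h)$; Lemma \ref{lem:mvl8} bounds this by $(|Q|/\phi(Q))^{21/2} q^{2h}$, governed by the $V_4$ piece. Summing yields $R_5(q^h) \ll V_5(Q;h) + (|Q|/\phi(Q))^{21/2} q^{2h}$.

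For the second inequality, Theorem \ref{thm:funcfieldfifth} gives $m_5(Q;h) \ll |Q| q^{2h+\ep}$, and Lemma \ref{lem:mvl2} rearranges this to $V_5(Q;h) \ll q^{2h+\ep} (|Q|/\phi(Q))^5$. Since $|Q|/\phi(Q) \ll h \ll_{\ep} q^{\ep h}$ for $Q = \prod_{|P| \le q^{2h}} P$, both $V_5(Q;h)$ and $(|Q|/\phi(Q))^{21/2} q^{2h}$ are $\ll q^{2h + \ep'}$ after absorbing the polylogarithmic factors into $\ep$. The main obstacle is not conceptual but organizational: verifying carefully that each of the five non-extremal correction terms really does yield something strictly smaller than $(|Q|/\phi(Q))^{21/2} q^{2h}$. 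The signs in the $\S_0$ inclusion-exclusion must be tracked so that no cancellation is claimed that isn't there, but since we only need an upper bound in absolute value, a term-by-term triangle-inequality estimate suffices; the saving from each extra coincidence ensures the resulting bound is dominated by the $\d_{12}$ term.
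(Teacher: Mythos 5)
Your proposal follows the paper's argument exactly: expand $\prod_{i<j}(1-\delta_{ij})$ into the seven grouped terms, recognize the ``$1$'' term as $V_5(Q;h) + O(1)$, use the reduction identity $\mathfrak S(\mathcal D;Q) = (|Q|/\phi(Q))\,\mathfrak S(\mathcal D';Q)$ to collapse each correction term into a linear combination of $V_k(Q;h)$ for $k<5$, and bound everything with Lemma~\ref{lem:mvl8}, Theorem~\ref{thm:funcfieldfifth}, and $|Q|/\phi(Q) \ll h \ll_\ep q^{\ep h}$. This is precisely what the paper does.

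One small slip in the bookkeeping: you say the $(|Q|/\phi(Q))^{21/2}q^{2h}$ bound for the $-10\delta_{12}$ term is ``governed by the $V_4$ piece,'' but it is actually the $V_3$ piece that produces the exponent $21/2$. From Lemma~\ref{lem:mvl8} (odd case, $k=3$) one gets $V_3(Q;h) \ll q^h(|Q|/\phi(Q))^{19/2}$, so $q^h\bigl(\tfrac{|Q|}{\phi(Q)}-1\bigr)V_3(Q;h) \ll q^{2h}(|Q|/\phi(Q))^{21/2}$, whereas the $V_4$ piece contributes only $\ll q^{2h}(|Q|/\phi(Q))^3$ (plus a further-subdominant $q^{3h/2}$ term). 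This does not affect the validity of the claimed bound, but a careful reader would want the dominant source attributed correctly.
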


Performing the same analysis when $k = 3$ yields the bound
\begin{corollary}
Let $Q = \prod_{\substack{P \text{ irred.} \\ |P| \le q^{6h}}} P$. Then
\[R_3(q^h) \ll V_3(Q;h) +q^h\left(\frac{|Q|}{\phi(Q)}\right)^2 \ll q^h \left(\frac{|Q|}{\phi(Q)}\right)^{8}.\]
\end{corollary}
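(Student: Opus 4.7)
The plan is to mirror the passage from $V_k(Q;h)$ to $R_k(q^h)$ just carried out for $k=5$; the combinatorics is much simpler when $k=3$. Starting from $R_3(q^h)=\sum_{D_1,D_2,D_3 \text{ distinct},\,|D_i|<q^h}\mathfrak{S}_0(\{D_1,D_2,D_3\})$, I would first truncate the expansion of $\mathfrak{S}_0$ to $R_i\mid Q$ at the cost of $O(1)$, then insert the distinctness factor $\prod_{1\le i<j\le 3}(1-\delta_{ij})$. Because any product $\delta_{ij}\delta_{k\ell}$ with $\{i,j\}\neq\{k,\ell\}$ forces $D_1=D_2=D_3$, this factor expands to $1-(\delta_{12}+\delta_{13}+\delta_{23})+2\delta_{123}$. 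The identity term is exactly $V_3(Q;h)$ via $\sum_{\mathcal D}\mathfrak{S}_0(\mathcal D;Q)=V_3(Q;h)$ (summing over all, not just distinct, tuples), so the task reduces to bounding the two multiset corrections.

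Expanding the finite Euler product yields the two identities $\mathfrak{S}_0(\{D,D,D_3\};Q)=\bigl(\tfrac{|Q|}{\phi(Q)}-2\bigr)\mathfrak{S}_0(\{D,D_3\};Q)$ and $\mathfrak{S}_0(\{D,D,D\};Q)=\bigl(\tfrac{|Q|}{\phi(Q)}-1\bigr)\bigl(\tfrac{|Q|}{\phi(Q)}-2\bigr)$, exactly as in the $k=5$ derivation. Summing over $D,D_3$ and using symmetry, the $\delta_{ij}$ terms together contribute $-3\bigl(\tfrac{|Q|}{\phi(Q)}-2\bigr)V_2(Q;h)$, while the $\delta_{123}$ term contributes $2q^h\bigl(\tfrac{|Q|}{\phi(Q)}-1\bigr)\bigl(\tfrac{|Q|}{\phi(Q)}-2\bigr)$. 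The second is already $O(q^h(|Q|/\phi(Q))^2)$, so it suffices to show $V_2(Q;h)\ll q^h\cdot|Q|/\phi(Q)$.

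For the $V_2$ bound, the constraint $A_1/R_1+A_2/R_2\in\F_q[T]$ combined with $A_i\in\mathcal R(R_i)$ forces $R_1=R_2=:R$ and $A_2\equiv -A_1\pmod R$, so $V_2(Q;h)=\sum_{R\mid Q,\,|R|>1}\tfrac{\mu(R)^2}{\phi(R)^2}\sum_{A\in\mathcal R(R)}|E(A/R)|^2$. Lemma \ref{lem:mvl4} gives $\sum_A|E(A/R)|^2\le|R|q^h$ when $|R|\ge q^h$, and this sum vanishes for $|R|<q^h$ (since then $E(A/R)=0$ for all nonzero $A$). Hence $V_2(Q;h)\ll q^h\prod_{P\mid Q}\bigl(1+\tfrac{|P|}{(|P|-1)^2}\bigr)$; the factorization $1+\tfrac{|P|}{(|P|-1)^2}=\tfrac{|P|}{|P|-1}\bigl(1+\tfrac{1}{|P|(|P|-1)}\bigr)$ together with convergence of the tail product $\prod_P\bigl(1+\tfrac{1}{|P|(|P|-1)}\bigr)$ gives $V_2(Q;h)\ll q^h\cdot|Q|/\phi(Q)$. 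This establishes the first inequality $R_3(q^h)\ll V_3(Q;h)+q^h(|Q|/\phi(Q))^2$.

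For the second inequality, it is enough to plug $k=3$ into the odd-$k$ consequence of Lemma \ref{lem:mvl8}, which yields $m_3(Q;h)\ll|Q|q^h(|Q|/\phi(Q))^{13/2}$ and hence, after dividing by $|Q|(\phi(Q)/|Q|)^3$, the bound $V_3(Q;h)\ll q^h(|Q|/\phi(Q))^{19/2}$; this dominates the lower-order term $q^h(|Q|/\phi(Q))^2$. Nothing here is really hard: the only step requiring a short independent computation is the variance bound on $V_2(Q;h)$, and every other piece reduces to the multiset identities and Lemma \ref{lem:mvl8} already established in the paper.
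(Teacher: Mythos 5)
Your proposal is correct and reproduces exactly the argument the paper intends (it literally says ``performing the same analysis when $k=3$'' and omits the details you supply). The only piece the paper never writes out and you work out from scratch is the direct evaluation $V_2(Q;h)=\sum_{R\mid Q,\,|R|>1}\mu(R)^2\phi(R)^{-2}\sum_{A\in\mathcal R(R)}|E(A/R)|^2\ll q^h\,|Q|/\phi(Q)$; this is needed because Lemma~\ref{lem:mvl8} only applies for $k\ge 3$, and your computation via Lemma~\ref{lem:mvl4} plus the Euler-product estimate is the right way to handle it.
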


\section{Numerical Evidence for Odd Moments} \label{sec:numerics}

Here we present several charts supporting our conjectures on the sizes of the odd moments. To begin with, we have computed $\frac 16 R_3(h) = \sum_{1 \le d_1< d_2< d_3 \le h} \mathfrak S_0(\{d_1, d_2, d_3\})$. Below, $\frac 16 R_3(h)$ is plotted in black. We expect $R_3(h)$, and thus also $\frac 16 R_3(h)$, to be of the shape $Ah(\log h)^2$, for some constant $A$. We found an experimental best fit value of $A = 0.373727$, and for this $A$ have plotted $Ah(\log h)^2$ alongside $\frac 16 R_3(h)$, as a dashed red line.

\begin{figure}[H]
\centering
\includegraphics[width=0.7\textwidth]{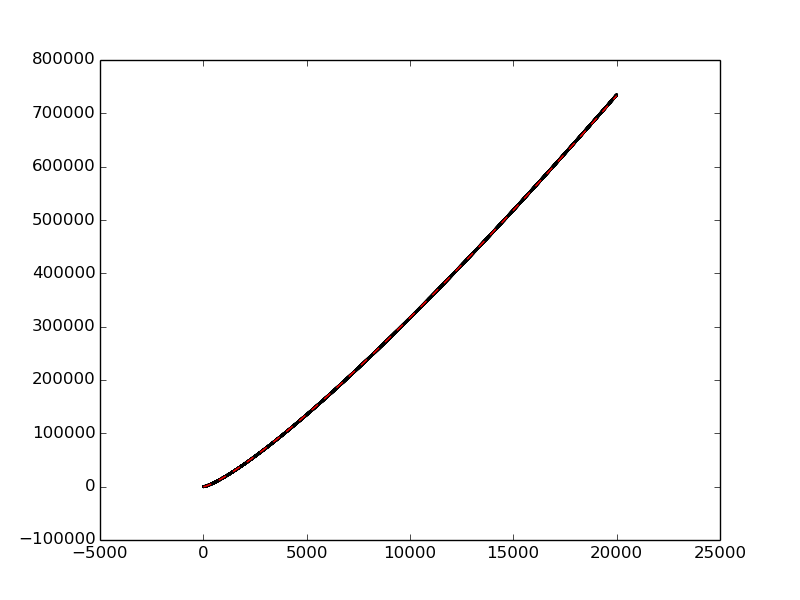}
\caption{$\frac 16 R_3(h)$ for $3 \le h \le 20000$}
\end{figure}

The fit of the theoretical red dashed curve is quite close, but there are lower-order fluctuations; below we plot the difference between $\frac 16 R_3(h)$ and $Ah(\log h)^2$.

\begin{figure}[H]
\centering
\includegraphics[width=0.7\textwidth]{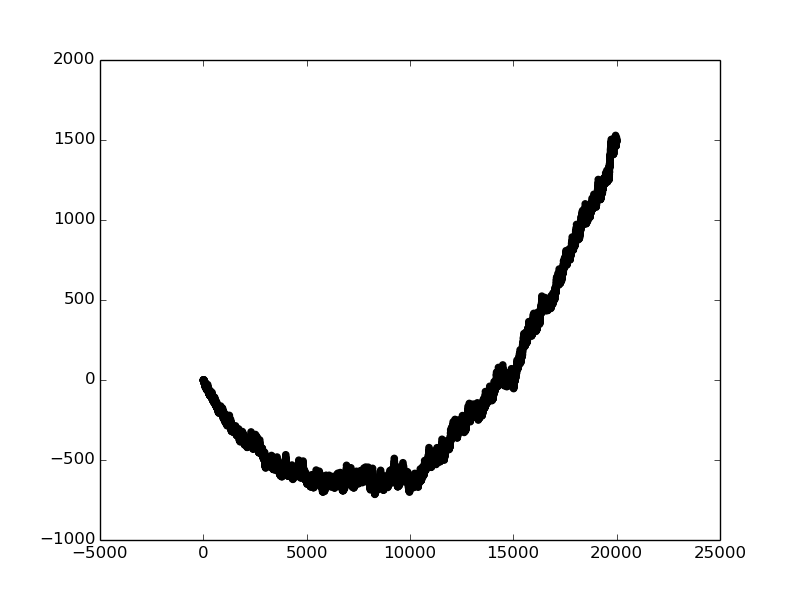}
\caption{$\frac 16 R_3(h) - Ah(\log h)^2$ for $3 \le h \le 20000$}
\end{figure}

Our analysis above includes relatively little discussion about the moments of the distribution of primes themselves. We have computed several third, fifth, and seventh moments of the distribution of primes. Specifically, we have computed $\tilde M_k(N;N^{\delta}) = \frac 1N\sum_{n=N}^{2N} (\psi(n+N^{\delta}) - \psi(n) - N^{\delta})^k$, for each of $\delta = 0.25,0.5$ and $0.75$, and for each of $k = 3,5,7$. For a fixed $\delta$ and $k$, we plot $\tilde M_k(N;N^{\delta})$ for values of $N$ ranging from $1$ to $10^7$, and growing exponentially.

Each of the plots below is drawn with both $x$- and $y$-axes on a logarithmic scale. We expect the $k$th moment to be of size approximately $O(H^{(k-1)/2} (\log \frac NH)^{(k+1)/2}$, where $H = N^{\delta}$, so to give a sense of size, for each plot, $N^{\delta (k-1)/2}(\log N^{1-\delta})^{(k+1)/2}$ is plotted in dashed red. We have also plotted the reflection of the red dashed curve across the $x$-axis, since the odd moments are frequently negative.

\begin{figure}[H]
\centering
\begin{minipage}{0.32\textwidth}
\centering
\includegraphics[width=\textwidth]{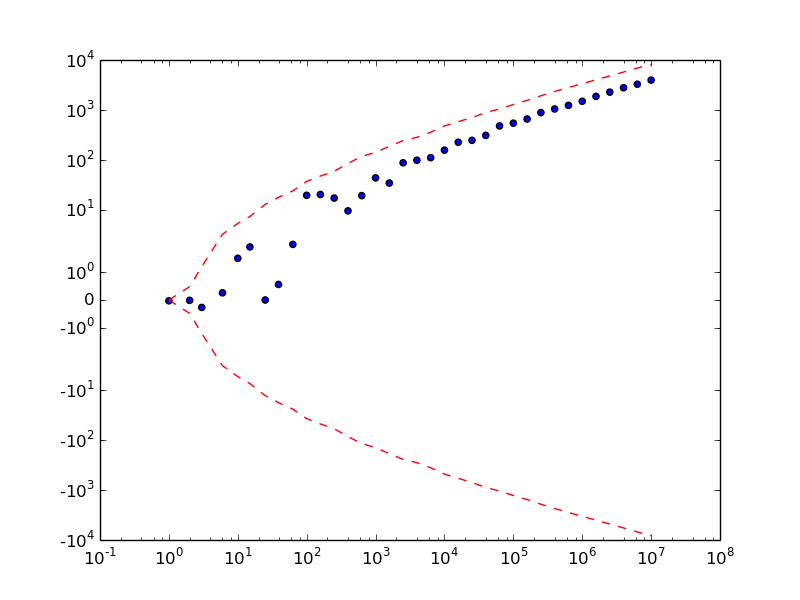}
\caption*{$\d = 0.25$}
\end{minipage}
\begin{minipage}{0.32\textwidth}
\centering
\includegraphics[width=\textwidth]{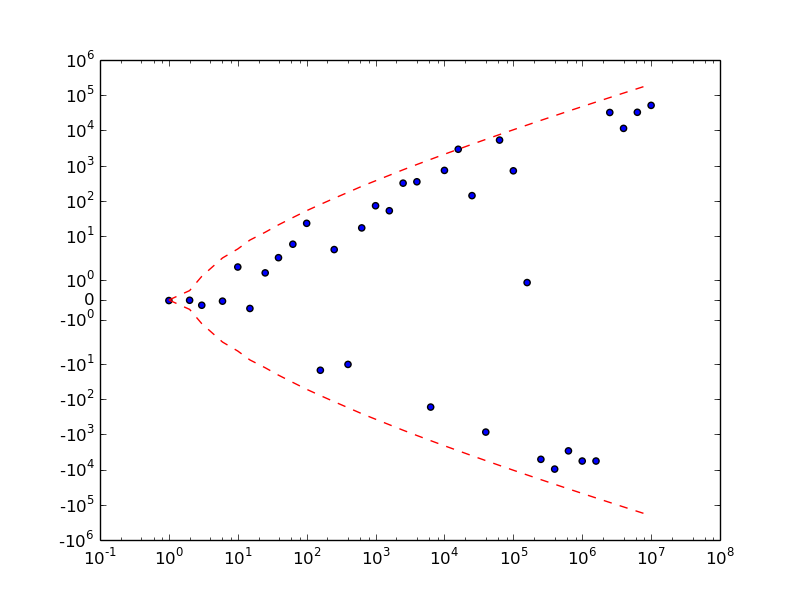}
\caption*{$\d = 0.5$}
\end{minipage}
\begin{minipage}{0.32\textwidth}
\centering
\includegraphics[width=\textwidth]{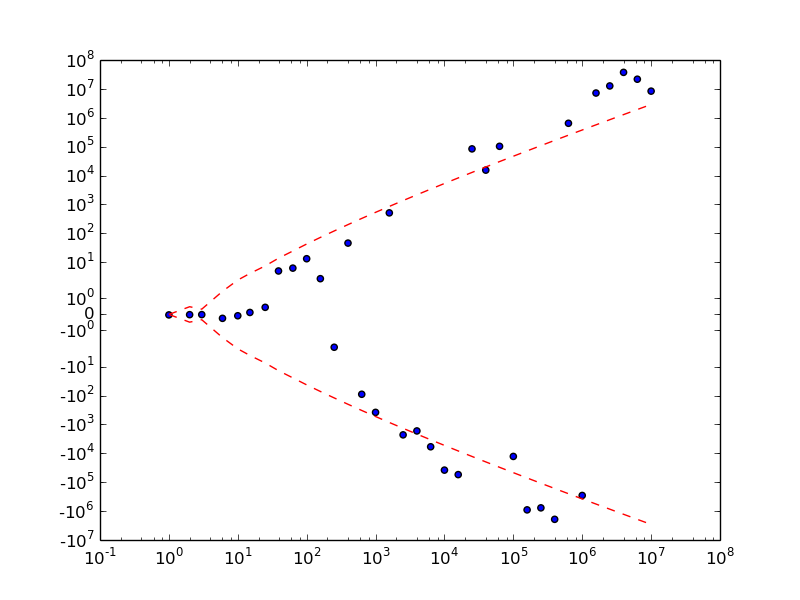}
\caption*{$\d = 0.75$}
\end{minipage}
\caption{Plots of the third moment $M_3(N;N^{\delta})$ for $N \le 10^7$.}
\end{figure}

\begin{figure}[H]
\centering
\begin{minipage}{0.32\textwidth}
\centering
\includegraphics[width=\textwidth]{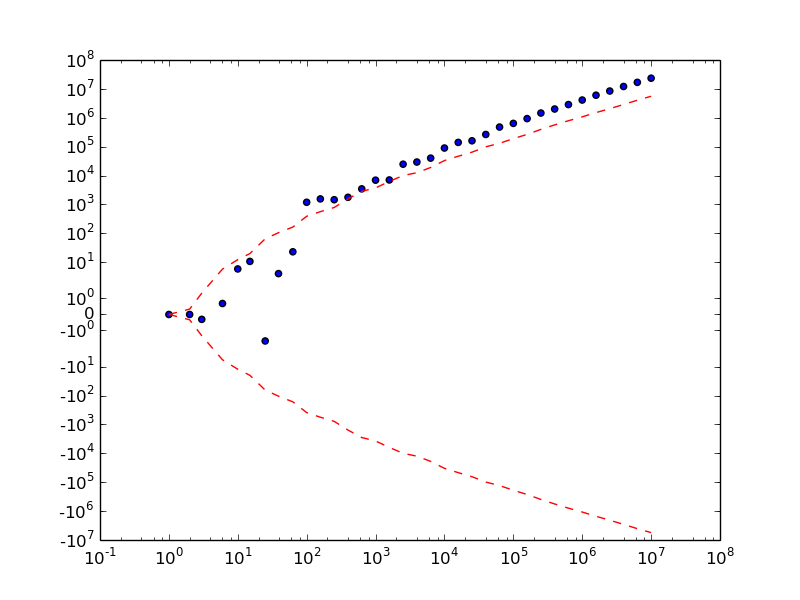}
\caption*{$\d = 0.25$}
\end{minipage}
\begin{minipage}{0.32\textwidth}
\centering
\includegraphics[width=\textwidth]{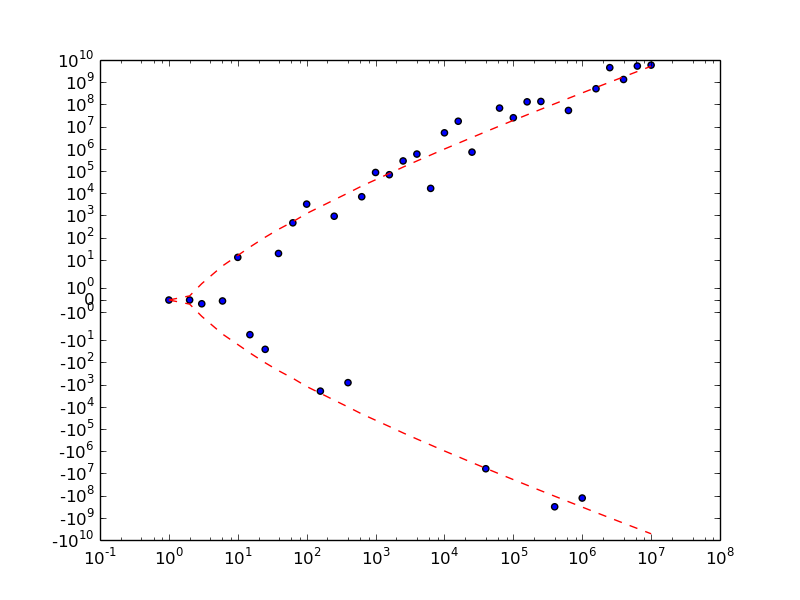}
\caption*{$\d = 0.5$}
\end{minipage}
\begin{minipage}{0.32\textwidth}
\centering
\includegraphics[width=\textwidth]{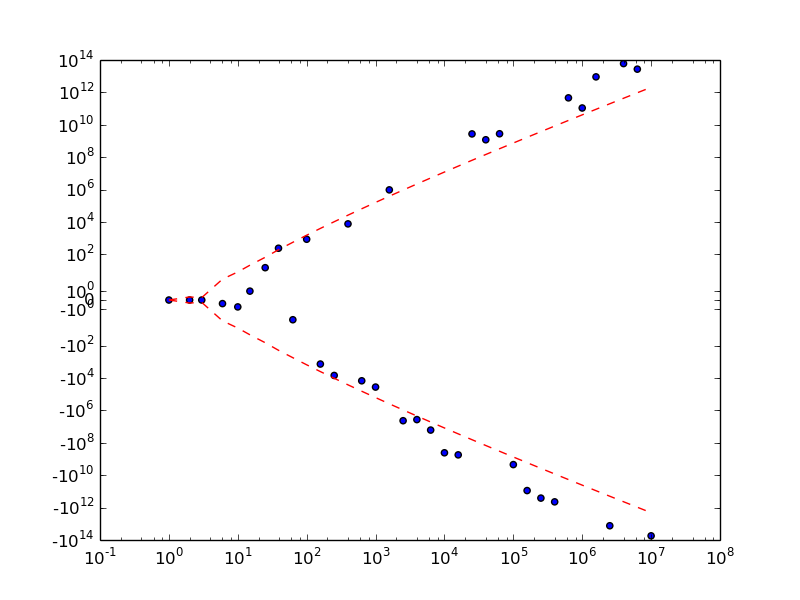}
\caption*{$\d = 0.75$}
\end{minipage}
\caption{Plots of the fifth moment $M_5(N;N^{\delta})$ for $N \le 10^7$.}
\end{figure}

\begin{figure}[H]
\centering
\begin{minipage}{0.32\textwidth}
\centering
\includegraphics[width=\textwidth]{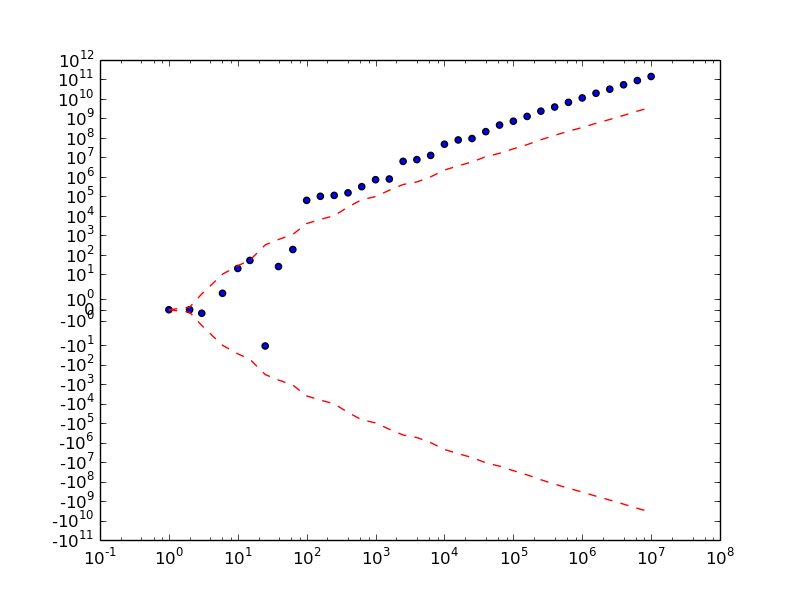}
\caption*{$\d = 0.25$}
\end{minipage}
\begin{minipage}{0.32\textwidth}
\centering
\includegraphics[width=\textwidth]{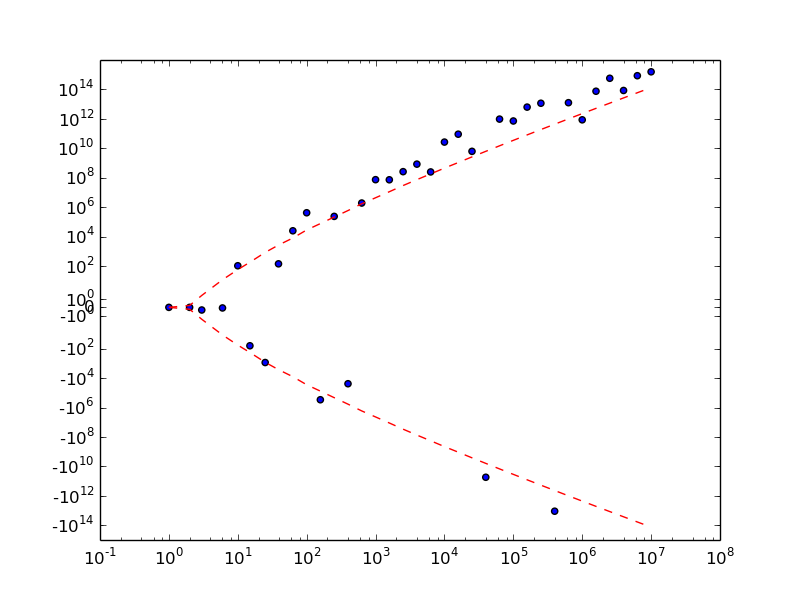}
\caption*{$\d = 0.5$}
\end{minipage}
\begin{minipage}{0.33\textwidth}
\centering
\includegraphics[width=\textwidth]{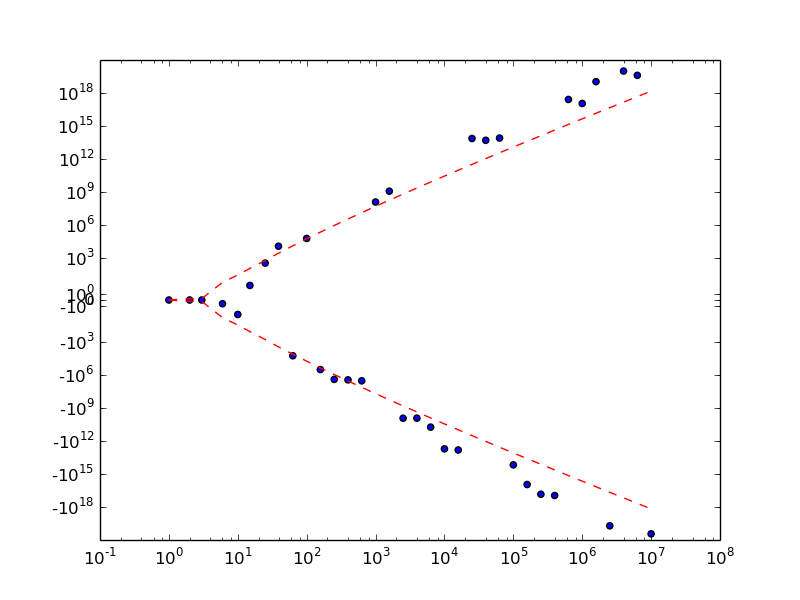}
\caption*{$\d = 0.75$}
\end{minipage}
\caption{Plots of the seventh moment $M_7(N;N^{\delta})$ for $N \le 10^7$.}
\end{figure}

The fit of the red line is reasonably good in all cases, but not perfect. In every case here we seem to see that the odd moments are more frequently positive than negative, but still take on negative values. For $\d = 0.25$, the odd moments seem to be positive for sufficiently large $N$; it is possible that this effect occurs for all sufficiently large $N$, where the threshold depends on $k$ and $\d$. 

\section{Toy Models and Open Problems} \label{sec:toys}

Throughout, we have studied the sum
\[R_k(h) = \sum_{\substack{q_1, \dots, q_k \\ 1 < q_i}} \left(\prod_{i=1}^k \frac{\mu(q_i)}{\phi(q_i)}\right) \sum_{\substack{a_1, \dots, a_k \\ 1 \le a_i \le q_i \\ (a_i,q_i) = 1 \\ \sum a_i/q_i \in \Z}} \prod_{i=1}^k E\left(\frac{a_i}{q_i}\right), \]
where $E(\a) = \sum_{m=1}^h e(m\a)$. The sums $E(\a)$ approximately detect when $\|\a\| \le \frac 1h$; the analogous sum in the function field case precisely detects when $\a$ has small degree. As a result, much of our understanding boils down to answering the following key question.

\begin{question}\label{qn:fractionsums}
Let $\d > 0$ and let $Q > 1/\d$. What is
\[\#\left\{q_1, \dots, q_k \in [Q,2Q], a_i \mod{q_i} : \left\|\frac{a_i}{q_i}\right\| \le \d, \sum_i \frac{a_i}{q_i} \in \Z\right\} ?\]
\end{question}
We conjecture that the answer to this question is as follows.
\begin{conjecture} \label{conj:fractionsums}
Let $\d > 0$ and let $Q > 1/\d$. Let $S$ be the size of the set in Question \ref{qn:fractionsums}. Then for any $\ep > 0$,
\[S \ll \begin{cases} Q^{k+\ep} \d^{k/2} &k \text{ even} \\ Q^{k+\ep} \d^{(k+1)/2} &k \text{ odd}.\end{cases}\]
\end{conjecture}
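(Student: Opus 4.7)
My plan is to reduce $S$ to the $L^k$ norm of an exponential sum, establish the even-$k$ case, and derive the odd-$k$ case by Cauchy--Schwarz. Choosing representatives $a_i \in (-q_i/2, q_i/2]$, the condition $\|a_i/q_i\| \le \delta$ becomes $|a_i| \le \delta q_i$; since $|\sum_i a_i/q_i| \le k\delta$, the integrality $\sum_i a_i/q_i \in \Z$ forces $\sum_i a_i/q_i = 0$ whenever $k\delta < 1$, which is the case of interest (the range $k\delta \ge 1$ is absorbed into $Q^{\ep}$). Setting
\[
F(\alpha) := \sum_{q \in [Q,2Q] \cap \Z}\ \sum_{\substack{a \in \Z \\ |a| \le \delta q}} e(\alpha a/q),
\]
one checks by orthogonality that $S = \int_0^1 F(\alpha)^k\, d\alpha$, and hence $S \le \int_0^1 |F(\alpha)|^k\, d\alpha$.

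For odd $k = 2m+1$, Cauchy--Schwarz gives
\[
\int_0^1 |F|^{2m+1}\, d\alpha \le \Bigl(\int_0^1 |F|^{2m}\, d\alpha\Bigr)^{1/2}\Bigl(\int_0^1 |F|^{2m+2}\, d\alpha\Bigr)^{1/2}.
\]
Granting the even-moment bound $\int_0^1 |F|^{2m}\, d\alpha \ll \delta^m Q^{2m+\ep}$ for every $m \ge 1$, the right-hand side is $\ll \delta^{m+1/2} Q^{2m+1+\ep}$, which is exactly the predicted $\delta^{(k+1)/2} Q^{k+\ep}$. The full conjecture thus reduces to its even case.

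For the even case, expanding $\int_0^1 |F|^{2m}\, d\alpha$ gives the number of tuples $(q_i, a_i)_{i=1}^{2m}$ with $q_i \in [Q,2Q]$, $|a_i| \le \delta q_i$, and $\sum_{i \le m} a_i/q_i = \sum_{i > m} a_i/q_i$. I would attack this by the GCD-decomposition machinery already used in Section~\ref{sec:threeterminteger} and Proposition~\ref{prop:fifthmomentmachine}: for each $I \ins [2m]$ let $Q_I$ be the product of primes dividing $q_i$ precisely for $i \in I$, so that the collision equation factors via the Chinese Remainder Theorem into a modular condition on each $Q_I$ with $|I| \ge 2$, while the bound $|a_i| \le \delta q_i$ localizes the numerators. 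Summing dyadically over $|Q_I|$, invoking the divisor bound of Lemma~\ref{lem:tkbound} to count the realizations of each GCD pattern, and applying an interval-counting input in the spirit of Lemma~\ref{lem:intervalbound} to handle the clustering of small fractions, one should recover $\delta^m Q^{2m+\ep}$ at least for small $m$ (where ``small'' here means roughly $2m \le 6$, matching the range currently within reach of the methods in the paper).

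The main obstacle is proving the even-moment bound uniformly in $m$. As the remark after Lemma~\ref{lem:notmvtuples} makes explicit, the layered peeling argument accumulates savings at a rate that breaks down once $m$ is large enough: after stripping off several inner variables, the remaining outer variables no longer carry enough effective denominator for the interval-counting input to deliver a full $\delta$-saving at every stage. Overcoming this is likely to require exploiting genuine cancellation (not merely size) in the inner sums---for instance by a major-arc/minor-arc decomposition of $F(\alpha)$, handling the spikes near small-denominator rationals $p/q$ separately and controlling the minor arcs by a Weyl-type bound---or else by applying a suitable large-sieve inequality directly to the set $\{a/q : q \asymp Q,\ |a| \le \delta q\}$, whose well-spacing properties should give exactly the square-root cancellation encoded in the conjectured exponent $k/2$.
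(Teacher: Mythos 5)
The statement you are attempting to prove is labelled a \emph{conjecture} in the paper, not a theorem; the paper offers no proof of Conjecture~\ref{conj:fractionsums} and records only the weaker Theorem~\ref{thm:mv} (a restatement of the Montgomery--Vaughan bound), which for odd $k$ gives the exponent $\d^{k/2-1/(7k)}$, well short of the conjectured $\d^{(k+1)/2}$. So there is no proof in the paper to compare against, and your proposal would, if correct, substantially exceed what is currently known.

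That said, there are two concrete problems. First, the orthogonality identity $S=\int_0^1 F(\alpha)^k\,d\alpha$ is false as stated: $\int_0^1 e(\alpha x)\,d\alpha$ is the indicator of $x=0$ only for integer $x$, while $\sum_i a_i/q_i$ is rational but typically not integral, so the integral returns $(e(x)-1)/(2\pi i x)\ne 0$ at off-diagonal tuples. This is repairable (for instance via a Fej\'er-kernel mollifier using the spacing $\ge (2Q)^{-k}$, or by detecting the integer $\sum_i a_i\prod_{j\ne i}q_j = 0$ directly), but must be fixed before the circle-method framework does anything.

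Second, and fatally, the Cauchy--Schwarz step does not produce the claimed exponent. With $k=2m+1$, granting $\int|F|^{2j}\,d\alpha\ll \d^jQ^{2j+\ep}$, you get $\int|F|^{2m+1}\ll \d^{m+1/2}Q^{2m+1+\ep}$. You then assert this equals $\d^{(k+1)/2}Q^{k+\ep}$, but $(k+1)/2 = m+1$, not $m+\tfrac12$: your power of $\d$ is off by $1/2$, and $\d^{m+1/2}=\d^{k/2}$ is exactly the \emph{even}-case exponent, not the odd one. This gap cannot be patched within your framework. Once you bound $S\le \int_0^1|F|^k\,d\alpha$, the diagonal contribution alone forces $\int_0^1|F|^{2m}\,d\alpha \gg \d^mQ^{2m}$, and H\"older interpolation then gives $\int_0^1|F|^k\,d\alpha\gg\d^{k/2}Q^k$ for every $k$. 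So any bound obtained by controlling $|F|^k$ is capped at $\d^{k/2}$ and can never reach $\d^{(k+1)/2}$. The entire content of the odd case of Conjecture~\ref{conj:fractionsums} is the extra factor $\sqrt{\d}$, which must come from genuine sign cancellation in the oscillatory sum over tuples (equivalently, in $\int_0^1 F^k\,d\alpha$ itself, after the orthogonality is fixed), not from any estimate on absolute values; your approach discards the sign structure in its first inequality $S\le\int_0^1|F|^k\,d\alpha$. The major-arc/minor-arc idea you mention at the end is the more promising direction, but it would need to be applied to the signed integral, not to $\int_0^1|F|^k\,d\alpha$.
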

As we discussed in the introduction, Montgomery and Vaughan \cite{MontgomeryVaughanReducedResidues} considered the related problem of moments of reduced residues modulo $q$. Their work depends on the following answer to Question \ref{qn:fractionsums} above.

\begin{theorem}\label{thm:mv}
Let $S$ be the size of the set in Question \ref{qn:fractionsums}. Then
\[S \ll \begin{cases} \d^{k/2} \sum_{\substack{Q \le r_i \le 2Q \\ 1 \le i \le k/2}} \frac{r_1^2 \cdots r_{k/2}^2}{\lcm(r_i)} + \d^{k/2-1/7k} \sum_{\substack{Q \le r_i \le 2Q \\ 1 \le i \le k}} \frac{r_1 \cdots r_k}{\lcm(r_i)} &k \text{ even} \\ \d^{k/2-1/7k} \sum_{\substack{Q \le r_i \le 2Q \\ 1 \le i \le k}} \frac{r_1 \cdots r_k}{\lcm(r_i)} &k \text{ odd} \end{cases} \]
\end{theorem}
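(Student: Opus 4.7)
The plan is to recast $S$ as an exponential sum and mimic the approach of Montgomery and Vaughan in \cite{MontgomeryVaughanReducedResidues}. Setting $L = \lcm(q_1, \ldots, q_k)$ and clearing denominators, the condition $\sum_i a_i/q_i \in \Z$ becomes $L \mid \sum_i (a_i L/q_i)$, which I would detect via orthogonality of additive characters modulo $L$. This gives
\[ S = \sum_{q_1, \ldots, q_k \in [Q,2Q]} \frac 1L \sum_{c \bmod L} \prod_{i=1}^k F_i(c), \qquad F_i(c) := \sum_{|a_i| \le \delta q_i} e\!\left(\frac{c a_i}{q_i}\right), \]
where $F_i(c)$ is an incomplete geometric sum satisfying $|F_i(c)| \ll \min(\delta q_i, \|c/q_i\|^{-1})$.

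The first step would be to extract the diagonal main term accounting for the first summand in the even-$k$ bound. Since $|F_i(c)|$ attains its maximum $\asymp \delta q_i$ only when $c$ is close to a multiple of $q_i$, the product $\prod_i F_i(c)$ approaches its trivial ceiling essentially only when the $q_i$ pair up as $q_{2j-1} = q_{2j} = r_j$ with $c$ a multiple of $\lcm(r_j)$. Counting such paired configurations, weighting by $1/L$, and accounting for the combinatorial multiplicity of $c$'s mod $L$ compatible with each pairing pattern produces the summand $\delta^{k/2} \sum r_1^2 \cdots r_{k/2}^2 / \lcm(r_i)$. When $k$ is odd no such pairing exists, and no analogous summand appears.

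For the remaining off-diagonal contribution I would invoke Weyl-type cancellation on $\prod F_i(c)$. The plan is to apply H\"older's inequality to split the $k$ factors into groups handled by the trivial bound $|F_i(c)| \ll \delta q_i$ and groups handled by the $L^2$ identity
\[ \sum_{c \bmod L} |F_i(c)|^2 \ll L \delta q_i, \]
where the latter introduces the weight $r_1 \cdots r_k / \lcm(r_i)$ upon averaging over $c$ and summing over tuples. Balancing the H\"older split — gaining roughly $\delta^{1/2}$ per factor handled nontrivially against the combinatorial cost proportional to $k$, together with a Weyl-type sub-trivial bound on short geometric sums — produces the saving of $\delta^{-1/(7k)}$ over the trivial $\delta^{k/2}$; the exact exponent $1/(7k)$ is the output of a standard optimization in the Weyl method.

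The main obstacle will be the case analysis cleanly separating the near-diagonal $c$'s (those contributing to the main term) from the truly off-diagonal ones, uniformly in the gcd/lcm structure of $(q_1, \ldots, q_k)$. In particular $L$ can range from $\sim Q$ (when all $q_i$ coincide) to $\sim Q^k$ (when they are pairwise coprime), and both the size of the main-term contribution and the strength of the Weyl savings depend sensitively on this range. One must show that the $c$'s contributing to the main term are precisely those forcing the pairing structure, and that the complementary set admits the $\delta^{-1/(7k)}$ saving uniformly across all tuples. The fact that $1/(7k)$ falls well short of the exponent predicted by Conjecture \ref{conj:fractionsums} — namely $\delta^{(k+1)/2}$ for odd $k$ — reflects the crudeness of the Weyl estimates rather than a fundamental barrier, and is precisely the slack that motivates the present paper's improvements for small odd $k$.
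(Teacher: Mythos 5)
The paper does not re-prove this theorem; it states that the argument is identical to the one Montgomery and Vaughan give in \cite{MontgomeryVaughanReducedResidues}, and that proof is of a very different character from what you sketch. Montgomery and Vaughan first establish a ``Fundamental Lemma'' (an iterated Cauchy--Schwarz bound on $\bigl|\sum_{\sum a_i/q_i \in \Z} G_1(a_1/q_1)\cdots G_k(a_k/q_k)\bigr|$ in terms of $\prod_i (q_i \sum_{a_i}|G_i|^2)^{1/2}/\lcm(q_i)$), then exploit the freedom to choose \emph{which} pair $(q_i,q_j)$ to merge by the lemma, running a case analysis on the gcd structure of the tuple. The terms $r_1\cdots r_k/\lcm(r_i)$ and $r_1^2\cdots r_{k/2}^2/\lcm(r_i)$ fall out of this Cauchy--Schwarz step, not out of a character-sum decomposition, and the exponent $1/(7k)$ arises from balancing the cases in which the chosen gcd is large versus small. (You can see the template for this in Section~3 of the present paper, where Lemma~\ref{lem:mvl7} and Lemma~\ref{lem:mvl8} reproduce the argument in $\F_q[T]$.)

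The concrete gap in your plan is the reliance on ``Weyl-type cancellation'' and ``a standard optimization in the Weyl method'' to produce the factor $\d^{-1/(7k)}$. The sums $F_i(c)=\sum_{|a_i|\le \d q_i}e(ca_i/q_i)$ are geometric progressions; they already have the sharp closed-form bound $\min(\d q_i,\|c/q_i\|^{-1})$ that you cite, and there is no further Weyl cancellation available in them --- Weyl differencing applies to sums $\sum e(f(n))$ with $f$ a nonlinear polynomial, which is not what appears here. There is no off-the-shelf H\"older/Weyl optimization that yields $1/(7k)$; that constant is an artifact of Montgomery and Vaughan's specific case analysis. A secondary issue is that your diagonal heuristic --- pairing $q_{2j-1}=q_{2j}=r_j$ --- only produces a contribution $\d^{k/2}\sum r_1\cdots r_{k/2}$, whereas the theorem's main term is $\d^{k/2}\sum r_1^2\cdots r_{k/2}^2/\lcm(r_i)$, which is strictly larger whenever the $r_j$ share common factors; your sketch does not explain where the extra factor $r_1\cdots r_{k/2}/\lcm(r_i)$ comes from, and indeed in the Cauchy--Schwarz approach it emerges automatically from the $1/\lcm$ weight in the Fundamental Lemma. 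The orthogonality-mod-$L$ reformulation in your first display is a legitimate alternative starting point, but the subsequent steps would need to be replaced by the Montgomery--Vaughan machinery (or something equally structured) rather than by Weyl estimates.
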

The proof of the above theorem is identical to the proof in \cite{MontgomeryVaughanReducedResidues}. This agrees with Conjecture \ref{conj:fractionsums} for the case when $k$ is even, but gives a weaker bound when $k$ is odd. 

We can also consider generalizations of Question \ref{qn:fractionsums}. For example, instead of specifying that $\left\|\frac{a_i}{q_i}\right\| \le \d$, we may ask that it lie in any specified interval.
\begin{question}
Let $Q > 1/\d$ and let $I_1, \dots, I_k$ be $k$ intervals in $[0,1]$ with $|I_j| \ge \d$ for all $j$. What is
\[\#\left\{q_1, \dots, q_k \in [Q,2Q], a_i \mod{q_i} : \left\|\frac{a_i}{q_i}\right\| \in I_i, \sum_i \frac{a_i}{q_i} \in \Z\right\} ?\]
\end{question}
Answers to these questions would give us more refined understanding of sums of singular series. The conjectures above are related to sums over $\S(\{h_1, \dots, h_k\})$, where each $h_i$ lies in the same interval $[0,h]$. We can instead ask about sums of singular series restricted to arbitrary intervals, or along arithmetic progressions. We state the following questions using smooth cutoff functions as opposed to intervals. 

\begin{question}\label{qn:sumsinintervals}
Let $\Phi_1, \dots, \Phi_k$ be smooth functions with compact support on $\R$, and let $H \in \R_{>0}$. What is
\[\sum_{h_1, \dots, h_k \in \Z} \S_0(\{h_1, \dots, h_k\})\Phi_1\left(\frac{h_1}{H}\right) \cdots \Phi_k\left(\frac{h_k}{H}\right) ?\]
\end{question}
\begin{question}\label{qn:sumsinprogressions}
Let $\Phi_1, \dots, \Phi_k$ be smooth functions with compact support on $\R$, and let $H \in \R_{>0}$. For arithmetic progressions $a_1 \mod{q_1}, \dots, a_k \mod{q_k}$, what is
\[\sum_{\substack{h_1, \dots, h_k \in \Z \\ h_i \equiv a_i \mod{q_i}}} \S_0(\{h_1, \dots, h_k \})\Phi_1\left(\frac{h_1}{H}\right) \cdots \Phi_k\left(\frac{h_k}{H}\right) ?\]
\end{question}

Question \ref{qn:sumsinintervals} addresses the correlations of $\psi(x+h)-\psi(x)$ and $\psi(x+h_1 + h)-\psi(x+h_1)$; in other words, the correlations of the number of primes in intervals in different places. Question \ref{qn:sumsinprogressions} addresses the correlations of the number of primes in distinct arithmetic progressions. For both of these questions, the main term ought to come from diagonal terms where $h_1 = h_2$, for example, thus collapsing the weight function, whereas the error term ought to arise from off-diagonal contributions. 

In the case when $k = 2$, Question \ref{qn:sumsinprogressions} has been widely studied in the context of prime number races. The ``Shanks-R\'enyi prime number race'' is the following problem: let $\pi(x;q,a)$ denote the number of primes $p \le x$ with $p \equiv a \mod q$. Then for any $n$-tuple $(a_1, \dots, a_n)$ of equivalence classes mod $q$ that are relatively prime to $q$, will we have the ordering
\[\pi(x;q,a_1) > \pi(x;q,a_2) > \cdots > \pi(x;q,a_n)\]
for infinitely many integers $x$? Many aspects of this question have been studied; see for example the expositions of Granville and Martin \cite{MR2202918}, and Ford and Konyagin \cite{MR1985941}.

In \cite{MR3961320}, Ford, Harper, and Lamzouri show that, although any ordering appears infinitely often, for $n$ large with respect to $q$, the prime number races among orderings can exhibit large biases. They rely on the fact that counts of primes in distinct progressions have negative correlations, which they arrange to produce a bias. This analysis is also connected to the work of Lemke Oliver and Soundararajan in \cite{lemkeoliversound}, who use averages of two-term singular series in arithmetic progressions to show bias in the distribution of consecutive primes. It is plausible that a more precise understanding of the questions above would lead to an extension of the work of Lemke Oliver and Soundararajan.

\bibliographystyle{amsplain}
\bibliography{singseries}
\end{document}